\newtheorem{theorem}{Theorem}[section]
\newtheorem{lemma}[theorem]{Lemma}
\newtheorem{proposition}[theorem]{Proposition}
\theoremstyle{definition}
\theoremstyle{remark}
\numberwithin{equation}{section}
\thanks{$^*$ Corresponding author: Kunio Hidano}%added, 2021/3/26
\begin{document}
\allowdisplaybreaks
%%%%%%%%%%%%%%%%Title%%%%%%%%%%%%%%%%%%%%%%%%%%%%%
\title[Global solutions to quasi-linear wave equations]
{Global existence for null-form wave equations 
with data in a Sobolev space of lower regularity and weight}

%%%%%%%%%%%%%%%%%Name%%%%%%%%%%%%%%%%%%%%%%%%%%%%%%%%%%%%
%    Information for authors
%First author
\author[K.\,Hidano]{Kunio Hidano$^*$}
%Second author
\author[K.\,Yokoyama]{Kazuyoshi Yokoyama}
%%%%%%%%%%%%%%%%%Address%%%%%%%%%%%%%%%%%%%%%%%%%%%%%%%
%    Address of record for the research reported here
%First author
\address{Department of Mathematics, Faculty of Education, 
Mie University, 1577 Kurima-machiya-cho Tsu, Mie Prefecture 514-8507, Japan}
\email{hidano@edu.mie-u.ac.jp}
%Second author
\address{Hokkaido University of Science, 
7-Jo 15-4-1 Maeda, Teine, Sapporo, 
Hokkaido 006-8585, Japan}
\email{yokoyama@hus.ac.jp}
%%%%%%%%%%%%%%%Thanks%%%%%%%%%%%%%%%%%%%%%%%%%%%%%%%%%%%%
%    \thanks will become a 1st page footnote.
%\thanks{The first author was supported in part by 
%the Grant-in-Aid for Scientific Research (C) (No.\,23540198), 
%Japan Society for the Promotion of Science (JSPS)}

%%%%%%%%%%%%%General info%%%%%%%%%%%%%%%%%%%%%%%%%%%%%%

%%%%HMJ requires 2020 MR Subject Classification%%%%%%%%%%%%%%%
%\subjclass[2020]{Primary 35L05, 35L15; Secondary 35L70}
%Oh! \subjclass[2020]{...} does not work well!!
%\subjclass{Primary 35L05, 35L15; Secondary 35L70}
%From the web of Tohoku Math.J.
\renewcommand{\thefootnote}{\fnsymbol{footnote}}
\footnote[0]{2020\textit{ Mathematics Subject Classification}.
 Primary 35L52, 35L15; Secondary 35L72}

%\date{January 1, 2001 and, in revised form, June 22, 2001.}

%\dedicatory{Dedicated to K.Y. on his 45 birthday}

%%%%%%%%%%%%%%Key words%%%%%%%%%%%%%%%%%%%%%%%%%%%%%%%%%%%%%%%%%%%
\keywords{Global existence; Quasi-linear wave equations; Null condition}

%%%%%%%%%%%%%%Abstract%%%%%%%%%%%%%%%%%%%%%%%%%%%%%%%%%%%%%%%%%%%%%%%%%%%%
%\baselineskip=0.55cm
\begin{abstract}
Assuming initial data have small weighted 
$H^4\times H^3$ norm, 
we prove global existence of solutions 
to the Cauchy problem for systems of 
quasi-linear wave equations in three space dimensions 
satisfying the null condition of Klainerman. 
Compared with the work of Christodoulou, 
our result assumes smallness of data 
with respect to $H^4\times H^3$ norm 
having a lower weight.
Our proof uses 
the space-time $L^2$ estimate 
due to Alinhac for some special derivatives of solutions 
to variable-coefficient wave equations. 
It also uses the conformal energy estimate 
for inhomogeneous wave equation $\Box u=F$. 
A new observation made in this paper is that, 
in comparison with the proofs of Klainerman and H\"ormander, 
we can limit the number of occurrences 
of the generators of hyperbolic rotations or dilations 
in the bootstrap argument. 
This limitation allows us to obtain global solutions 
for radially symmetric data, 
when a certain norm with considerably 
lower weight is small enough. 
\end{abstract}

\maketitle

%%%%%%%%%%%%%%%%%Section 1%%%%%%%%%%%%%%%%%%%%%%%%%%%%%%
\section{Introduction}
This paper is concerned with the Cauchy problem for 
systems of quasi-linear wave equations 
\begin{equation}\label{eq1}
\Box u_i
+
F_i(\partial u,\partial^2 u)
+
C_i(u,\partial u,\partial^2 u)=0,
\quad
t>0,\,\,x\in{\mathbb R}^3
\end{equation}
$(i=1,\dots,N$ for some $N\in{\mathbb N})$ with 
initial data 
\begin{equation}\label{data1}
u_i(0)=f_i,\quad
\partial_t u_i(0)=g_i.
\end{equation}
Here, $\Box=\partial_t^2-\Delta$, 
$u=(u_1,\dots,u_N)$, 
$\partial u=(\partial u_1,\dots,\partial u_N)$, 
$\partial u_i=(\partial_0 u_i,\dots,\partial_3 u_i)$, 
$\partial^2 u=(\partial^2 u_1,\dots,\partial^2 u_N)$, 
$\partial^2 u_i=(\partial_0^2 u_i,\partial_0\partial_1 u_i,\dots,
\partial_3^2 u_i)$, 
$\partial_0=\partial/\partial t$, 
$\partial_a=\partial/\partial x_a$ $(a=1,2,3)$. 
%%%%%%%%%%%%%%%%%%%%%%%%%%%%%%%%%%%%%%%%%%%%%%%
%Let us make the precise assumption on 
%the form of the nonlinear term. 
As in the seminal papers \cite{Christodoulou} and \cite{KlainermanNull86}, 
we will discuss the diagonal system, 
and we suppose that the quadratic nonlinear term 
$F_i(\partial u,\partial^2 u)$ has the form 
\begin{equation}\label{nlt1}
F_i(\partial u,\partial^2 u)
=
F_i^{j,\alpha\beta\gamma}(\partial_\gamma u_j)(\partial_{\alpha\beta}^2 u_i)
+
F_i^{jk,\alpha\beta}(\partial_\alpha u_j)(\partial_\beta u_k), 
\quad
i=1,\dots,N
\end{equation}
for real constants $F_i^{j,\alpha\beta\gamma}$ and $F_i^{jk,\alpha\beta}$. 
%Without loss of generality, we also suppose 
%$F_i^{j,00\gamma}=0$ for any $i$, $j$, and $\gamma$ 
%throughout this paper. 
Here, and in the following, 
we use the summation convention, 
that is, 
if lowered and raised, 
repeated indices of 
Greek letters and Roman letters are summed 
from $0$ to $3$ and $1$ to $N$, respectively. 
As for the higher-order term 
$C_i(u,\partial u,\partial^2 u)$, 
we may suppose without loss of generality 
that 
it is cubic because this paper is concerned only with small solutions. 
We thus suppose it has the form 
\begin{equation}\label{nlt2}
C_i(u,\partial u,\partial^2 u)
=
G_i^{\alpha\beta}(u,\partial u)\partial_{\alpha\beta}^2 u_i
+
H_i(u,\partial u),\quad
i=1,\dots,N,
\end{equation}
where 
%$G_i^{00}(u,v)\equiv 0$, 
$G_i^{\alpha\beta}(u,v)$ is a homogeneous polynomial 
of degree 2, and $H_i(u,v)$ is a homogeneous polynomial 
of degree $3$ in $u$ and $v$. 
Since we consider $C^2$-solutions, 
we may suppose without loss of generality
\begin{equation}\label{sym1}
F_i^{j,\alpha\beta\gamma}
=
F_i^{j,\beta\alpha\gamma},
\quad
G_i^{\alpha\beta}(u,\partial u)
=
G_i^{\beta\alpha}(u,\partial u).
\end{equation}
For given $i,j$, 
we say that the set of the coefficients 
$\{F_i^{j,\alpha\beta\gamma}:\alpha,\beta,\gamma=0,\dots, 3\}$ 
satisfies the null condition if we have 
\begin{equation}\label{null1}
F_i^{j,\alpha\beta\gamma}
X_\alpha X_\beta X_\gamma=0
\end{equation}
for any $X=(X_0,X_1,X_2,X_3)\in{\mathbb R}^4$ 
satisfying $X_0^2=X_1^2+X_2^2+X_3^2$. 
Also, for given $i,j,k$, we say that 
the set of the coefficients 
$\{F_i^{jk,\alpha\beta}:\alpha,\beta=0,\dots,3\}$ 
satisfies the null condition if we have 
\begin{equation}\label{null2}
F_i^{jk,\alpha\beta}
X_\alpha X_\beta=0
\end{equation}
for any $X=(X_0,X_1,X_2,X_3)\in{\mathbb R}^4$ 
satisfying $X_0^2=X_1^2+X_2^2+X_3^2$. 
We say that 
the system (\ref{eq1}) satisfies the null condition 
if the sets of the coefficients $\{F_i^{j,\alpha\beta\gamma}\}$ 
and $\{F_i^{jk,\alpha\beta}\}$ 
satisfy the null condition for all given $i,j$ and $i,j,k$, 
respectively.

%%%%%%%%%%%%%%%%%%%%%%%%%%%%%%%%%%%%%%%%%%%%%
For the scalar wave equations, thus $N=1$ in (\ref{eq1}), 
with a quadratic nonlinear term 
$\Box u=(\partial_t u)^2$ and 
$\Box u=|\nabla u|^2$, 
nonexistence of global smooth solutions 
was shown even for small data by 
John \cite{John1981} and Sideris \cite{Sideris1983}, 
respectively. 
Actually, in these two papers, nonexistence of 
global solutions was shown also 
for some types of quasi-linear wave equations. 
On the other hand, if the system (\ref{eq1}) satisfies the null condition 
and the initial data is sufficiently small, smooth, and 
compactly supported, 
then the Cauchy problem (\ref{eq1})--(\ref{data1}) admits 
a unique global smooth solution. 
This was shown by Klainerman \cite{KlainermanNull86} with use of 
the generators of the Lorentz transformations and the dilations, 
in addition to the standard partial differential operators 
$\partial_\alpha$ (the generators of translations). 
The conformal energy, the Klainerman inequality (see (\ref{kl1}) below), 
and his $L^1$--$L^\infty$ 
weighted estimate for inhomogeneous wave equations \cite{Kl1984}, 
which are all written in terms of these generators, 
played an important role in his proof. 
(To be precise, an earlier version of (\ref{kl1}) was employed in 
\cite{KlainermanNull86}.) 
Later, H\"ormander \cite{Hor} refined the $L^1$--$L^\infty$ weighted estimate 
of Klainerman and showed 
%%%%%%%%%%%%%%%%%%%%%%%%%%%%%%%5
\begin{equation}\label{hor1}
(1+t+|x|)|u(t,x)|
\leq
C
\sum_{j+|a|+|b|+|c|+d\leq 2}
\int_0^t\!\!\int_{{\mathbb R}^3}
\frac{|\partial_s^j\partial_y^a\Omega^b L^c S^d F(s,y)|}{1+s+|y|}\,dyds
\end{equation}
%%%%%%%%%%%%%%%%%%%%%%%%%%
for the equation $\Box u=F$ with zero data. 
Making an effective use of (\ref{hor1}), 
he gave a more precise assumption on 
smallness of data. 
Namely, H\"ormander gave an alternative proof 
of global existence under the weaker assumption that 
the quantities related with the given initial data 
\begin{align}
&
\sum_{i=1}^N\sum_{j+|a|+|b|+|c|+d\leq 5}
(1+|x|)
|\partial_t^j\partial_x^a\Omega^b L^cS^d u_i(0,x)|,
\label{small1}\\
&\sum_{i=1}^N\sum_{j+|a|+|b|+|c|+d\leq 9}
\|\partial_t^j\partial_x^a\Omega^b L^cS^d u_i(0)\|_{L^2({\mathbb R}^3)}
\label{small2}
\end{align}
are small enough. 
(The definition of $\Omega^b L^cS^d$ is given below. 
We remark that for $j=2,\dots, 9$, we can calculate 
$\partial_t^j u_i(0,x)$ with the help of the equation (\ref{eq1}), 
thus these two quantities are determined by the given small data. 
We also remark that, by virtue of the Sobolev type 
inequality (\ref{sob25}), the smallness of (\ref{small2}) actually 
ensures that of (\ref{small1}).) 
On the other hand, in \cite{Christodoulou}, 
Christodoulou assumed smallness of data 
with respect to the weighted $H^4\times H^3$ norm 
\begin{equation}\label{small3}
\sum_{i=1}^N
\biggl(
\sum_{|a|\leq 4}
\|\langle x\rangle^{3+|a|}\partial_x^a f_i\|_{L^2({\mathbb R}^3)}
+
\sum_{|a|\leq 3}
\|\langle x\rangle^{4+|a|}\partial_x^a g_i\|_{L^2({\mathbb R}^3)}
\biggr)
\end{equation}
and proved global existence result 
under the null condition by the conformal mapping method. 
(Here, and in the following as well, we employ the standard notation 
$\langle x\rangle:=\sqrt{1+|x|^2}$.) 
In comparison with this Christodoulou's size condition, 
a question naturally arises\,: does the method of 
using the generators yield the proof of 
global existence of solutions to (\ref{eq1})--(\ref{data1}) 
under the null condition when some weighted 
$H^4\times H^3$ norm of data is small enough? 
Exploiting a new way of handling the null-form quadratic nonlinear terms 
with use of the weighted $L^2$ estimate 
for some special derivatives, 
Alinhac proved his truly remarkable energy inequality and 
gave an affirmative answer to this long-standing problem, 
in the special case where 
all the cubic terms $C_i(u,\partial u,\partial^2 u)$ are absent. 
See pages 92--94 in \cite{Al2010}. 
In this connection, 
we cite Theorem 1.4 of \cite{HZ2019} here. 
(We remark that 
the theorem of Alinhac on page 94 in \cite{Al2010} 
was slightly improved in \cite{HY2017}. 
See Theorem 1.5 there. This result in \cite{HY2017} 
was then slightly improved in \cite{HZ2019}.) 
%%%%%%%%%%%%%%%%%%%%%%%%%%%%%%%%%%
\begin{theorem}\label{theoremhz}
Suppose that $(\ref{eq1})$ satisfies the null condition 
and that in $(\ref{nlt2})$, 
\begin{equation}\label{nlt3}
G_i^{\alpha\beta}(u,v)\equiv 0,\quad
H_i(u,v)\equiv 0
\end{equation}
for every $i$, $\alpha$, and $\beta$. 
Then there exists $\varepsilon>0$ such that 
if $f_i\in L^6({\mathbb R}^3)$ $(i=1,\dots,N)$ and 
\begin{equation}\label{small4}
\sum_{i=1}^N
\sum_{{|a|+|b|+d\leq 3}\atop{d\leq 1}}
\biggl(
\|
\partial_x\partial_x^a\Omega^b\Lambda^d f_i
\|_{L^2({\mathbb R}^3)}
+
\|
\partial_x^a\Omega^b\Lambda^d g_i
\|_{L^2({\mathbb R}^3)}
\biggr)
<\varepsilon
\quad
(\Lambda:=x\cdot\nabla),
\end{equation}
then the unique local $($in time$)$ solution 
to $(\ref{eq1})$ can be continued globally in time. 
\end{theorem}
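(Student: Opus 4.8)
The plan is to run a bootstrap/continuity argument based on a suitable a~priori energy estimate, using the three ingredients advertised in the introduction: the space–time $L^2$ estimate of Alinhac for the special ``good'' derivatives $G u$ of solutions to variable-coefficient wave equations, the conformal energy estimate for $\Box u=F$, and the Klainerman–Sobolev inequality together with the decay estimate~(\ref{hor1}). The novelty to exploit is the observation stressed in the abstract: in the hierarchy of vector fields $\Gamma\in\{\partial_\alpha,\Omega_{ab},L_a,S\}$ one need only ever apply at most \emph{one} hyperbolic rotation or scaling generator ($\Lambda$, or equivalently $L_a,S$) at each stage. Concretely, I would introduce the energy functional involving $\partial\partial_x^a\Omega^b\Lambda^d u$ for $|a|+|b|+d\le 3$, $d\le 1$ (matching the norm in~(\ref{small4})), together with a weighted space–time norm of the good derivatives $G(\partial_x^a\Omega^b\Lambda^d u)$, and assume it is bounded by $2C_0\varepsilon$ on a maximal interval $[0,T^*)$.

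The first step is to commute the vector fields $Z^I:=\partial_x^a\Omega^b\Lambda^d$ (with $d\le1$) through the quasi-linear operator $\Box+F_i^{j,\alpha\beta\gamma}(\partial_\gamma u_j)\partial_{\alpha\beta}^2$. Because $G_i^{\alpha\beta}\equiv 0$ and $H_i\equiv 0$ by~(\ref{nlt3}), the cubic terms are absent and the resulting right-hand side for $Z^Iu_i$ is a sum of (i) null-form quadratic terms in lower-order $Z$-derivatives of $u$, and (ii) commutator terms coming from $[Z^I,g^{\alpha\beta}(\partial u)\partial_{\alpha\beta}^2]$, where $g^{\alpha\beta}$ collects the first-order coefficients $F_i^{j,\alpha\beta\gamma}\partial_\gamma u_j$. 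The key point is that each application of $\Lambda$ to the coefficient costs a derivative but, by the restriction $d\le 1$, is applied at most once, so these commutators stay inside the admissible norm. The second step is to estimate the quadratic null-forms: using the standard null-form decomposition one replaces one factor of $\partial Z^J u$ by a good derivative $G(Z^J u)$ (gaining a factor $\langle t-r\rangle/\langle t+r\rangle$), and then absorbs that factor into the Alinhac space–time $L^2$ bound while controlling the other factor in $L^\infty$ via~(\ref{hor1}) or Klainerman–Sobolev. The third step is to close the conformal energy estimate for the top-order quantities, which supplies the weighted $L^2$-in-space control (in particular the factor $\langle t-r\rangle\partial u$) needed to feed back into step~two; here the hypothesis $f_i\in L^6(\mathbb R^3)$ is used to make the conformal energy at time $0$ finite even though no explicit weight on $f_i$ itself is assumed (only weighted derivatives appear in~(\ref{small4})). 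Combining steps one–three yields a differential inequality of the form $\frac{d}{dt}\mathcal E(t)\lesssim \langle t\rangle^{-1}\mathcal E(t)^{3/2}$-type, which, together with the Alinhac space–time bound, closes the bootstrap for $\varepsilon$ small and shows $\mathcal E(t)\le 2C_0\varepsilon$ persists, hence $T^*=\infty$.

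The main obstacle I expect is \emph{the bookkeeping that makes the ``at most one $\Lambda$'' restriction actually close}. When one commutes $\Lambda$ (or $S$, $L_a$) past the quasi-linear principal part, the commutator $[\Lambda,g^{\alpha\beta}\partial_{\alpha\beta}^2]$ produces terms like $(\Lambda g^{\alpha\beta})\partial_{\alpha\beta}^2 u$ and $g^{\alpha\beta}[\Lambda,\partial_{\alpha\beta}^2]u$; the first forces a $\Lambda$ onto the coefficient (i.e.\ onto a \emph{first} derivative of $u$), and one must check that this never requires a \emph{second} hyperbolic generator landing on the same factor, so that everything remains expressible through $Z^J u$ with $d\le 1$ — and simultaneously that the good-derivative gain is available on the right factor so the Alinhac estimate applies. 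Equally delicate is that the conformal energy method genuinely needs the weight $\langle x\rangle$ on data in Christodoulou's formulation, whereas here we only have weighted \emph{derivatives}; reconciling this — presumably by using the $L^6$ hypothesis on $f_i$ together with Hardy/Sobolev inequalities to recover finiteness of the conformal energy of the solution (not of the raw data) — is where the ``considerably low weight'' improvement is bought, and getting the constants to line up there is the technical heart of the argument.
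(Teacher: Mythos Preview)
Your proposal has a genuine gap: you plan to close the argument through the \emph{conformal energy estimate}, but the data hypothesis~(\ref{small4}) does not initialize the conformal energy. At $t=0$ the conformal energy of $u$ requires control of $\|f_i\|_{L^2}$ and $\||x|g_i\|_{L^2}$ (see~(\ref{confest})), neither of which follows from~(\ref{small4}): that condition controls only $\|\partial_x^a\Omega^b\Lambda^d g_i\|_{L^2}$ with $d\le 1$ (so $\|\Lambda g_i\|_{L^2}$, not $\||x|g_i\|_{L^2}$), and the assumption $f_i\in L^6$ does not yield $f_i\in L^2$. Your attempt to patch this via the $L^6$ hypothesis therefore cannot succeed, and the ``third step'' of your scheme does not close. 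A second, related problem is your reliance on the H\"ormander estimate~(\ref{hor1}) for $L^\infty$ decay: that inequality involves $L^cS^d$ with $|c|+d=2$, which is incompatible with the restriction $d\le 1$ built into the norm~(\ref{small4}) that you are trying to propagate.

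The paper does not prove Theorem~\ref{theoremhz} directly; it quotes it from~\cite{HZ2019} and states that the proof there proceeds by combining the ghost weight energy method of Alinhac with the \emph{Klainerman--Sideris method}~\cite{KS}. The latter is precisely what replaces the conformal energy in this setting: it furnishes weighted estimates of the type $\|\langle t-r\rangle\partial^2 u\|_{L^2}\lesssim\sum\|\partial Z u\|_{L^2}+\text{(nonlinear terms)}$ using only first-derivative energies, so no $L^2$ control of $u$ itself and no weighted $L^2$ control of $g$ are needed. The conformal energy and the Li--Yu/H\"ormander type estimates enter the paper only for the general case with cubic terms $C_i(u,\partial u,\partial^2 u)$ (Proposition~\ref{alinhacpro} and Theorem~\ref{maintheorem}), where $u$ itself appears in the nonlinearity; in the purely quadratic situation~(\ref{nlt3}) of Theorem~\ref{theoremhz} they are neither needed nor, with the data assumptions~(\ref{small4}), available.
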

%%%%%%%%%%%%%%%%%%%%%%%%%%%%%%%%%%%%%%%%
We remark that Theorem 1.4 in \cite{HZ2019} is concerned with 
the scalar equation, i.e., (\ref{eq1}) with $N=1$, 
but obviously the method there is general enough to 
prove Theorem \ref{theoremhz} above. 
The proof of this theorem is carried out by the combination of 
the ghost weight energy method of Alinhac \cite{Al2010} with 
the Klainerman-Sideris method \cite{KS}. 
We must enhance the discussions in \cite{Al2010}, 
\cite{HY2017}, and \cite{HZ2019} 
concerning the special case (\ref{nlt3}), 
because the system (\ref{eq1}) contains $u$ itself 
in the nonlinear terms. 
It is a natural attempt to inject into the argument in \cite{Al2010}, 
\cite{HY2017}, and \cite{HZ2019} 
such key elements of the proof due to 
Klainerman \cite{KlainermanNull86} 
and H\"ormander \cite{Hor} 
as the conformal energy and the $L^1$--$L^\infty$ 
weighted estimate for inhomogeneous wave equations (\ref{hor1}). 
Combining these elements with the ghost weight energy method in 
\cite{Al2010} and \cite{HY2017}, 
we can indeed obtain the following\,:
\begin{proposition}\label{alinhacpro}
Suppose that the system $(\ref{eq1})$ satisfies the null condition.
There exist positive constants $C$ and $\varepsilon$ such that 
if 
\begin{equation}\label{small5}
\sum_{i=1}^N
\biggl(
\sum_{|a|\leq 4}
\|
\langle x\rangle^{|a|}\partial_x^a f_i
\|_{L^2({\mathbb R}^3)}
+
\sum_{|a|\leq 3}
\|
\langle x\rangle^{|a|+1}\partial_x^a g_i
\|_{L^2({\mathbb R}^3)}
\biggr)
<\varepsilon,
\end{equation}
then the Cauchy problem {\rm $(\ref{eq1})$--$(\ref{data1})$} admits 
a unique global solution satisfying 
\begin{align}\label{sol1}
\sum_{i=1}^N
&
\biggl\{
%\sup_{(t,x)\in (0,\infty)\times{\mathbb R}^3}
\|(1+t+|x|)u_i\|_{L^\infty({\mathbb R}^+\times{\mathbb R}^3)}
%\nonumber\\
%&
+
\sum_{|a|\leq 3}
%\sup_{t>0}
\|\partial\Gamma^a u_i\|_{L^\infty({\mathbb R}^+;L^2({\mathbb R}^3))}\\
&
+
\sum_{j=1}^3
\sum_{|a|\leq 3}
\biggl(
\int_0^\infty
\biggl\|
\frac{1}{(1+|s-|\cdot||)^{(1/2)+\eta}}
T_j\Gamma^a u_i(s,\cdot)
\biggr\|_{L^2({\mathbb R}^3)}^2ds
\biggr)^{1/2}\nonumber\\
&
+
\sum_{|a|\leq 2}
%\sup_{t>0}
%(1+t)^{-\delta}
\|(1+t)^{-\delta}
\Gamma^a u_i\|_{L^\infty({\mathbb R}^+;L^2({\mathbb R}^3))}\nonumber\\
&
+
\sum_{|a|=3}
%\sup_{t>0}
%(1+t)^{-2\delta}
\|(1+t)^{-2\delta}
\Gamma^a u_i(t)\|_{L^\infty({\mathbb R}^+;L^2({\mathbb R}^3))}
\biggr\}
\leq
C\varepsilon.\nonumber
\end{align}
Here ${\mathbb R}^+:=(0,\infty)$, 
$\delta$ and $\eta$ are sufficiently small positive constants 
and $T_j=\partial_j+(x_j/|x|)\partial_t$, $j=1,2,3$. 
\end{proposition}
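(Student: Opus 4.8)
The plan is to run a continuity argument for the quantity $M(T)$ obtained from the left-hand side of (\ref{sol1}) by restricting every time-integral and supremum to $[0,T]$. Local well-posedness for (\ref{eq1})--(\ref{data1}) in the weighted $H^4\times H^3$ class is standard, $M$ is continuous on the maximal interval of existence, and $M(0)\le C\varepsilon$ by (\ref{small5}) (one computes the time-derivatives $\partial_t^j u_i(0)$ from the equation and checks that the weights in (\ref{small5}) are exactly those needed to bound the data of $\partial^a\Omega^bL^cS^d u_i$ with a \emph{bounded} number of $\Omega,L,S$). Hence it suffices to show that there is a fixed $A$ so that $M(T)\le A\varepsilon$ on some $[0,T]$ implies $M(T)\le (A/2)\varepsilon$ once $\varepsilon$ is small. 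The bootstrap bound gives, through the Klainerman inequality (\ref{kl1}), pointwise decay $|u|+|\partial u|\lesssim \varepsilon\langle t+|x|\rangle^{-1+C\delta}\langle t-|x|\rangle^{-1/2}$, so the quasi-linear coefficients in (\ref{nlt1})--(\ref{nlt2}) are small perturbations of $\Box$.

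First I would establish the vector-field energy estimates, the engine being Alinhac's ghost-weight method \cite{Al2010}. For $|a|\le 3$, apply $\Gamma^a$ to (\ref{eq1}), keep the top-order quasi-linear term $(F_i^{j,\alpha\beta\gamma}\partial_\gamma u_j+G_i^{\alpha\beta})\partial_{\alpha\beta}^2\Gamma^a u_i$ on the left, collect commutators and genuinely lower-order pieces on the right, multiply by $e^{q(|x|-t)}\partial_t\Gamma^a u_i$ with the ghost weight $q$, $q'(\sigma)=\langle\sigma\rangle^{-1-2\eta}$, and integrate over $[0,t]\times{\mathbb R}^3$. The left side yields $\|\partial\Gamma^a u_i(t)\|_{L^2}^2$ together with the space-time integral of $\langle s-|y|\rangle^{-1-2\eta}\sum_j|T_j\Gamma^a u_i|^2$, i.e.\ lines two and three of (\ref{sol1}). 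On the right, the null conditions (\ref{null1})--(\ref{null2}) force every image under $\Gamma^a$ of the quadratic terms $F_i$ to contain either a good factor $T_j\Gamma^b u_k$ or a gain $\langle t-|x|\rangle/\langle t+|x|\rangle$, so it is absorbed either by the ghost-weight term or by the pointwise decay of the remaining factors; the cubic terms $C_i$ are handled by brute-force decay. The quasi-linear modification of the energy identity and of the characteristic surface is controlled by the lowest-order norms and costs only a harmless power of $\varepsilon$.

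The delicate bookkeeping, and the point stressed in the abstract, is how many hyperbolic rotations $L$ and dilations $S$ are spent. Here I would use the Klainerman--Sideris inequalities \cite{KS} to bound $\|\langle t-|x|\rangle\partial^2\Gamma^a u_i\|_{L^2}$ and the weighted $L^2$ norms of $\Gamma^a u_i$ in terms of the energies above and $\|\langle t+|x|\rangle\Box\Gamma^a u_i\|_{L^2}$, arranging the hierarchy so that the $|a|=3$ estimates require only $\partial$ and $\Omega$, while the norms carrying an $L$ or an $S$ appear only at order $\le 2$ (resp.\ $=3$) and are permitted to grow like $(1+t)^\delta$ (resp.\ $(1+t)^{2\delta}$) --- these are precisely lines four and five of (\ref{sol1}), and this is why the low weight (\ref{small5}) is enough. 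Lines four and five, which control $\Gamma^a u_i$ itself rather than its gradient, I would obtain from the conformal energy estimate for $\Box u=F$ applied to $\Gamma^a u_i$, with the null forms and cubic terms as forcing, estimated by the bounds already in hand and closed by Gronwall at the cost of the stated $\delta$- and $2\delta$-powers of $(1+t)$. Finally the pointwise bound $\|(1+t+|x|)u_i\|_{L^\infty}$ follows from H\"ormander's inequality (\ref{hor1}) \cite{Hor}: its right side involves $\Gamma^{\le 2}$ of the nonlinearity weighted by $\langle s+|y|\rangle^{-1}$, and the null structure together with the weighted $L^2$--$L^\infty$ bounds already obtained makes that integral $O(\varepsilon)$. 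Collecting the pieces gives $M(T)\le C(\varepsilon+\varepsilon M(T)+M(T)^2)$, which closes the bootstrap for $\varepsilon$ small; the continuation criterion then yields the global solution, and uniqueness is local.

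The main obstacle will be to design the hierarchy of weighted norms so that three demands are met simultaneously: the quasi-linear perturbation of the light cone is absorbed by the lowest-order energies; no top-order null-form estimate ever calls for more occurrences of $L$ or $S$ than are actually available, which forces one to exploit the extra $\langle t-|x|\rangle^{-1/2}$ carried by the good derivatives $T_j$ and by the Klainerman--Sideris weights very efficiently; and the genuinely cubic terms $G_i^{\alpha\beta}(u,\partial u)\partial_{\alpha\beta}^2 u_i$ and $H_i(u,\partial u)$ --- which, since $u$ itself carries no decay in $t-|x|$, are only borderline integrable against $\langle s+|y|\rangle^{-1}$ in (\ref{hor1}) and only borderline summable in the conformal energy argument --- do not spoil the closure. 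Balancing these constraints against the low weight in (\ref{small5}) is the crux.
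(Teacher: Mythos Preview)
Your overall architecture---ghost-weight energy method for $\|\partial\Gamma^a u_i\|_{L^2}$ and the space-time $T_j$ integrals, the conformal energy estimate for $\|\Gamma^a u_i\|_{L^2}$ (lines four and five of (\ref{sol1})), and H\"ormander's $L^1$--$L^\infty$ inequality (\ref{hor1}) for the pointwise bound $\|(1+t+|x|)u_i\|_{L^\infty}$---is exactly what the paper says is needed. Indeed the paper does not give a detailed proof of this proposition; it is stated in the introduction as the outcome of injecting the conformal energy and (\ref{hor1}) into the Alinhac ghost-weight argument of \cite{Al2010}, \cite{HY2017}, \cite{HZ2019}, and the paper explicitly records that (\ref{hor1}) and the Klainerman inequality (\ref{kl1}) ``play an important role in the proof of Proposition~\ref{alinhacpro}.''

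However, you have misread the statement in one important respect. In Proposition~\ref{alinhacpro} the symbol $\Gamma^a$ denotes the \emph{full} family $\partial_t^j\partial_x^a\Omega^bL^cS^d$ with $j+|a|+|b|+|c|+d\le 3$, with \emph{no} restriction on $|c|+d$; the paper spells this out immediately below the proposition. So your plan to ``arrange the hierarchy so that the $|a|=3$ estimates require only $\partial$ and $\Omega$, while the norms carrying an $L$ or an $S$ appear only at order $\le 2$'' is neither what the proposition asserts nor what is needed to prove it. That limitation of the number of $L$ and $S$ occurrences is precisely the innovation of the \emph{main} Theorem~\ref{maintheorem}, and it is the reason Theorem~\ref{maintheorem} must avoid (\ref{hor1}) and (\ref{kl1}) (which cost two $L$'s or $S$'s) and replace them by the Sobolev/trace inequalities (\ref{sob21})--(\ref{sob25}), (\ref{hy}), the Li--Yu estimate (\ref{ly}), and the Keel--Smith--Sogge-type bound (\ref{rod}). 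For Proposition~\ref{alinhacpro} none of that substitution is necessary: the weight $\langle x\rangle^{|a|}$ in (\ref{small5}) is exactly what is required to bound the initial data of $\Gamma^a u_i$ for \emph{all} $\Gamma^a$ with $|a|\le 3$ (including $L^3$, $L^2S$, etc.), and you are free to use (\ref{hor1}) and (\ref{kl1}) without any bookkeeping of how many $L$'s and $S$'s have been spent. Likewise the Klainerman--Sideris machinery you invoke is associated with Theorem~\ref{theoremhz} and is not part of the toolkit the paper names for Proposition~\ref{alinhacpro}. If you strip out the paragraph about limiting $L$ and $S$ and simply run the ghost-weight energy, the conformal energy, and (\ref{hor1})/(\ref{kl1}) with the full $\Gamma$-family, your sketch matches the paper's intended argument.
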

%%%%%%%%%%%%%%%%%%%%%%%%%%%%%%%%%
Concerning the definition of the commonly used operators $\Gamma^a$, 
see, e.g., \cite[p.\,46]{John1990}, 
\cite[p.\,301]{KlainermanNull86}. 
Namely, 
$$
\sum_{|a|\leq 3}
%\sup_{t>0}
\|\partial\Gamma^a u_i\|_{L^\infty({\mathbb R}^+;L^2({\mathbb R}^3))}
=
\sum_{j+|a|+|b|+|c|+d\leq 3}
\|\partial\partial_t^j\partial_x^a\Omega^b L^cS^d 
u_i\|_{L^\infty({\mathbb R}^+;L^2({\mathbb R}^3))}
$$
and so on. 
In comparison with the Christodoulou's size condition (\ref{small3}), 
the above one (\ref{small5}) has an advantage; 
it obviously assumes less decay on the data. 
Compared with (\ref{small5}), however, 
the size condition (\ref{small4}), 
which though applies to the special case (\ref{nlt3}), 
has an advantage that if 
$f_i$ and $g_i$ are radially symmetric 
(hence $\Omega f_i=\Omega g_i=0$) 
and the norm with the low weight
\begin{align}\label{small6}
\sum_{i=1}^N&
\biggl(
\|\partial_x f_i\|_{L^2({\mathbb R}^3)}
+
\|g_i\|_{L^2({\mathbb R}^3)}
\biggr)\\
&
+
\sum_{i=1}^N\sum_{1\leq |a|\leq 3}
\biggl(
\|
\langle x\rangle\partial_x\partial_x^a f_i
\|_{L^2({\mathbb R}^3)}
+
\|
\langle x\rangle\partial_x^a g_i
\|_{L^2({\mathbb R}^3)}
\biggr)\nonumber
\end{align}
is small enough, then 
(\ref{eq1})--(\ref{data1}) admits global solutions. 
In view of the current state of the art, 
the purpose of this paper is to 
show global existence of small solutions to (\ref{eq1})--(\ref{data1}) 
under the null condition 
when 
initial data have lower regularity than was assumed in 
\cite{KlainermanNull86} and \cite{Hor}, 
and have weaker decay than was assumed in \cite{Christodoulou} 
and Proposition \ref{alinhacpro}. 
In particular, taking into account 
Theorem \ref{theoremhz} which holds for the special case (\ref{nlt3}), 
we would naturally like to obtain global solutions for radially symmetric data 
when a low weight norm of data is small enough. 
Recall $\Lambda:=x\cdot\nabla$. 
We define
%%%%%%%%%%%%%%%%%%%%%%%%%%%%%%%%
%%%%%%%%%%%%%%%%%%%%%%%%%%%%%%%%
\begin{align}\label{small7}
D(f,g)
:&=
\sum_{i=1}^N
\biggl\{
\sum_{|a|+|b|\leq 3}
\bigl(
\|\partial_x\partial_x^a\Omega^b f_i\|_{L^2({\mathbb R}^3)}
+
\|\partial_x^a\Omega^b g_i\|_{L^2({\mathbb R}^3)}
\bigr)\\
&
\hspace{1.4cm}
+
\sum_{|a|+|b|\leq 2}
\bigl(
\|\partial_x\partial_x^a\Omega^b\Lambda f_i\|_{L^2({\mathbb R}^3)}
+
\|\partial_x^a\Omega^b\Lambda g_i\|_{L^2({\mathbb R}^3)}
\bigr)\nonumber\\
&
\hspace{1.4cm}
+
\|f_i\|_{L^2({\mathbb R}^3)}\nonumber\\
&
\hspace{1.4cm}
+
\sum_{|b|\leq 2}
\bigl(
\|\Omega^b\Omega f_i\|_{L^2({\mathbb R}^3)}
+
\|\Omega^b\Lambda f_i\|_{L^2({\mathbb R}^3)}
+
\||x|\Omega^b g_i\|_{L^2({\mathbb R}^3)}
\bigr)
\biggr\}.\nonumber
\end{align}
%%%%%%%%%%%%%%%%%%%%%%%%%%%%%
%%%%%%%%%%%%%%%%%%%%%%%%%%%%
Now we are in a position to state our main theorem. 
%%%%%%%%%%%%%%%%%%%%%%%%%%%%%%%%%%%%%%%%%%%%%%%%
%%%%%%%%%%%%%%%%%%%%%%%%%%%%%%%%%%%%%%%%%%%%%%%%
\begin{theorem}\label{maintheorem} 
Suppose that the system $(\ref{eq1})$ satisfies the null condition. 
There exist positive constants $C$, $\varepsilon$ such that if 
$D(f,g)<\varepsilon$, 
then the Cauchy problem {\rm $(\ref{eq1})$--$(\ref{data1})$} admits 
a unique global solution satisfying 
%%%%%%%%%%%%%%%%%%%%%%%%
\begin{align}\label{sol2}
\sum_{i=1}^N
\biggl\{&
\sum_{{|a|+|b|+|c|+d\leq 3}
      \atop
      {|c|+d\leq 1}
     }
%\sup_{t>0}
\|\partial\partial_x^a\Omega^b L^cS^d
u_i\|_{L^\infty({\mathbb R}^+;L^2({\mathbb R}^3))}
+
\sum_{|b|\leq 2}
%\sup_{t>0}
\|\Omega^b u_i\|_{L^\infty({\mathbb R}^+;L^2({\mathbb R}^3))}\\
&
+
\sum_{|b|\leq 2}
\biggl(
%\sup_{t>0}
%\bigl(
%\log(2+t)
%\bigr)^{-1}
\|\langle t\rangle^{-\delta}
(S+2)\Omega^b u_i\|_{L^\infty({\mathbb R}^+;L^2({\mathbb R}^3))}\nonumber\\
&
\hspace{1.4cm}
+
%\sup_{t>0}
%\bigl(
%\log(2+t)
%\bigr)^{-1}
\|\langle t\rangle^{-\delta}\Omega\Omega^b
u_i\|_{L^\infty({\mathbb R}^+;L^2({\mathbb R}^3))}
+
%\sup_{t>0}
%\bigl(
%\log(2+t)
%\bigr)^{-1}
\|\langle t\rangle^{-\delta}
L\Omega^b u_i\|_{L^\infty({\mathbb R}^+;L^2({\mathbb R}^3))}
\biggr)\nonumber\\
&
+
\sum_{k=1}^3
\biggl(
\sum_{
       {|a|+|b|+|c|+d\leq 3}
       \atop
       {|c|+d\leq 1}
      }
\|
\langle t-r\rangle^{-(1/2)-\eta}
T_k\partial_x^a\Omega^b L^c S^d u_i
\|_{L^2((0,\infty)\times{\mathbb R}^3)}\nonumber\\
&
\hspace{1.5cm}
+\sum_{
       {|a|+|b|+|c|+d\leq 2}
       \atop
       {|c|+d\leq 1}
      }
\|
\langle t-r\rangle^{-(1/2)-\eta}
T_k\partial_t\partial_x^a\Omega^b L^c S^d u_i
\|_{L^2((0,\infty)\times{\mathbb R}^3)}
\biggr)\nonumber\\
&
+\sum_{{|a|+|b|+|c|+d\leq 3}
      \atop
      {|c|+d\leq 1}
     }
\biggl(
\sup_{t>0}
\langle t\rangle^{-(1/4)-\delta/2}
\|
|x|^{-5/4}
\partial_x^a \Omega^b L^c S^d u_i
\|_{L^2((0,t)\times{\mathbb R}^3)}\nonumber\\
&
\hspace{2.9cm}
+
\sup_{t>0}
\langle t\rangle^{-(1/4)-\delta/2}
\|
|x|^{-1/4}
\partial\partial_x^a \Omega^b L^c S^d u_i
\|_{L^2((0,t)\times{\mathbb R}^3)}
\biggr)
\biggr\}\nonumber\\
&
\leq C\varepsilon.\nonumber
\end{align}
%%%%%%%%%%%%%%%%%%%%%%%
Here, $\delta$, and $\eta$ are positive constants satisfying 
$\delta<1/6$, $\eta<1/3$.
\end{theorem}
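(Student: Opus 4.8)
The plan is to run a standard global-continuation (bootstrap) argument. By the local well-posedness theory for \eqref{eq1}--\eqref{data1} and the energy continuation criterion — and noting that, via the Sobolev inequality \eqref{sob25}, $D(f,g)$ dominates a weighted $H^4\times H^3$ norm of $(f,g)$, so all the quantities in \eqref{sol2} are finite near $t=0$ — there is a maximal existence time $T^*\in(0,\infty]$, and it suffices to establish an a priori bound. Let $M(T)$ denote the left-hand side of \eqref{sol2} with every supremum and every space-time integral restricted to the slab $(0,T)\times{\mathbb R}^3$. I would prove that there are constants $C_0,C_1$ independent of $T$ with
\[
M(T)\le C_0\,D(f,g)+C_1\,M(T)^2\qquad\text{for all }T<T^*.
\]
A routine continuity argument then yields $M(T)\le 2C_0\varepsilon$ as long as $4C_0C_1\varepsilon<1$, which forces $T^*=\infty$ and gives \eqref{sol2}.

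The first task is to convert the $L^2$ information in $M(T)$ into pointwise decay. Since \eqref{sol2} controls $\partial\partial_x^a\Omega^bL^cS^d u_i$ only when $|c|+d\le 1$ (at most one hyperbolic rotation or scaling), the usual high-order Klainerman--Sobolev inequality is unavailable; instead I would use (i) rotation-only weighted Sobolev inequalities on ${\mathbb R}^3$, which give $\langle t+r\rangle^{-1}\langle t-r\rangle^{-1/2}$ decay for $\partial\partial_x^a\Omega^b u_i$ from $\partial,\Omega$-derivatives alone, (ii) the Klainerman--Sideris inequality, which bounds the $\langle t-r\rangle$-weighted energy of $\partial_x^a\Omega^b u_i$ by energies carrying a \emph{single} $S$ or $L$, so that the one permitted hyperbolic generator recovers the missing weight, and (iii) for $u_i$ itself, the $\langle t\rangle^{-\delta}$-weighted bounds on $(S+2)\Omega^b u_i$, $\Omega\Omega^b u_i$, $L\Omega^b u_i$ together with $\|\Omega^b u_i\|_{L^2}$, yielding $|u_i(t,x)|\le C\varepsilon\langle t+r\rangle^{-1}\langle t\rangle^{\delta}$ (the loss $\langle t\rangle^\delta$ is harmless because $\delta<1/6$). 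With these rates in hand, I would run the Alinhac ghost-weight energy method: apply $\partial_x^a\Omega^bL^cS^d$ with $|a|+|b|+|c|+d\le 3$, $|c|+d\le 1$ to \eqref{eq1}, multiply by $e^{q(r-t)}\partial_t(\partial_x^a\Omega^bL^cS^d u_i)$ with $q$ a bounded increasing function, and integrate; this produces the conserved energy plus the positive space-time term $\sum_k\int e^q\langle t-r\rangle^{-1-2\eta}|T_k(\partial_x^a\Omega^bL^cS^d u_i)|^2$ of \eqref{sol2}. The null condition guarantees that the commuted quadratic terms $F_i$ are finite sums of genuine null forms, each carrying either a good derivative $T_k$ or the factor $\langle t-r\rangle/\langle t+r\rangle$; one factor of each product is absorbed into the ghost-weight space-time norm and the other into the decay estimates, producing an $M(T)^2$ contribution. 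The cubic terms $C_i$, including the quasilinear part $G_i^{\alpha\beta}(u,\partial u)\partial^2_{\alpha\beta}u_i$, are easier — two factors already give $\langle t\rangle^{-2+2\delta}$-type decay — and the quasilinear structure is handled as in \cite{Al2010}, \cite{HY2017}.

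In parallel, the slowly-growing and conformal-type quantities in \eqref{sol2} — the $\langle t\rangle^{-\delta}$-weighted norms of $(S+2)\Omega^b u_i$, $\Omega\Omega^b u_i$, $L\Omega^b u_i$, and the norm $\|\Omega^b u_i\|_{L^2}$ — I would control by the conformal energy identity for $\Box u=F$ (together with a Hardy-type inequality for the bottom-order $L^2$ norm of $u_i$, which is why $\|f_i\|_{L^2}$ and $\||x|\Omega^b g_i\|_{L^2}$ appear in $D(f,g)$, since $Lu|_{t=0}\sim|x|g$, $Su|_{t=0}=\Lambda f$, $\Omega u|_{t=0}=\Omega f$). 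The key economy is that this is applied only to $\Omega^b u_i$ with $|b|\le 2$, never to a quantity already containing $L$ or $S$, so no high-regularity conformal estimate is needed; the growth of this energy is governed by $\int_0^t\|\langle s+r\rangle\,\Omega^b(F_i+C_i)\|_{L^2}\,ds$, which I would bound by Hölder in space-time against the $|x|^{-5/4}$- and $|x|^{-1/4}$-weighted local-energy (Keel--Smith--Sogge / Hidano--Yokoyama-type) norms in \eqref{sol2} — these in turn obtained from a weighted space-time $L^2$ estimate for $\Box$ fed by the energy and the nonlinearity — using the null-form gains and the decay above, while checking that the resulting power of $\langle t\rangle$ stays strictly below $\delta$; this is where the restrictions $\delta<1/6$ and $\eta<1/3$ are consumed. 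Collecting all of this yields $M(T)\le C_0D(f,g)+C_1M(T)^2$.

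The delicate point — and the place where the paper's new observation does its work — is the \emph{top-order} ($|a|+|b|+|c|+d=3$) estimate of the quadratic null-form interactions under the constraint $|c|+d\le 1$: when a product of three generators hits $F_i(\partial u,\partial^2 u)$, the Leibniz rule naively wants to pile up to three hyperbolic generators onto a single factor, whereas \eqref{sol2} controls $\partial\Gamma^b u_i$ only for $|c|+d\le 1$. The resolution is that the null structure always lets one of the two factors in each commuted product be taken with a good derivative $T_k$ — placed in the ghost-weight space-time norm, for which $|c|+d\le 1$ is exactly the available budget — while the remaining factor, now of order $\le 2$, is put into a Klainerman--Sideris-improved decay norm that uses a single $S$ or $L$, or into one of the conformal/local-energy norms. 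Verifying that this redistribution can always be arranged within the budget $|c|+d\le 1$, and that every Hölder pairing closes with $\delta<1/6$, $\eta<1/3$, is the technical heart of the proof; everything else is a by-now-standard combination of the Alinhac ghost-weight method, the Klainerman--Sideris decomposition, the conformal energy, and weighted space-time $L^2$ estimates.
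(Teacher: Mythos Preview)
Your overall architecture---ghost-weight energy for $N_4$, conformal energy for the $\langle t\rangle^{-\delta}$-weighted quantities, Keel--Smith--Sogge-type local energy for the $|x|^{-5/4}$, $|x|^{-1/4}$ norms, and a continuity argument---matches the paper. But there is a genuine gap concerning the \emph{uniform} bound on $\sum_{|b|\le 2}\|\Omega^b u_i(t)\|_{L^2}$ (call it $X_2(u(t))$), which appears in \eqref{sol2} with \emph{no} $\langle t\rangle^{-\delta}$ weight.

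You propose to control $\|\Omega^b u_i\|_{L^2}$ through the conformal energy identity (plus a Hardy-type inequality). That route gives only $X_2(u(t))\le CM_3(u(t))\le C\varepsilon\langle t\rangle^{\delta}$, i.e.\ a slowly growing bound, not a uniform one. The paper shows explicitly that this is not enough to close the bootstrap: in the conformal estimate for $M_3$ (its Proposition~\ref{propositionm3}), the cubic terms coming from $H_i(u,\partial u)$ and $G_i^{\alpha\beta}(u,\partial u)\partial^2 u_i$ over the region $\{|x|>(t+1)/2\}$ contribute, after using only $r|u|\lesssim X_2^{1/2}N_2^{1/2}$, a term of the form
\[
\int_0^t \langle\tau\rangle^{-1}\,X_2(u(\tau))^3\,d\tau
\quad\text{and}\quad
\int_0^t \langle\tau\rangle^{-1}\,X_2(u(\tau))^2\,N_4(u(\tau))\,d\tau.
\]
If $X_2$ is uniformly bounded, these integrate to $\varepsilon^3\log\langle t\rangle$, safely below $\langle t\rangle^{\delta}$. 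If instead $X_2(u(\tau))\lesssim\varepsilon\langle\tau\rangle^{\delta}$, the first integral is $\sim\varepsilon^3\langle t\rangle^{3\delta}$, which cannot be absorbed into $\langle t\rangle^{\delta}M_3$ for any $\delta>0$. The paper flags this exact point: ``if we employed $M_2(u(\tau))$ there in place of $X_2(u(\tau))$, we could not close the estimates.''

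The missing ingredient is the Li--Yu estimate \eqref{ly}: for $\Box v=F$,
\[
\|v(t)\|_{L^2}\le \|f\|_{L^2}+\||D|^{-1}g\|_{L^2}
+C\int_0^t\|\chi_1 F(\tau)\|_{L^{6/5}}\,d\tau
+C\int_0^t\langle\tau\rangle^{-1/2}\|\chi_2 F(\tau)\|_{L_r^1L_\omega^{4/3}}\,d\tau,
\]
with $\chi_1,\chi_2$ the characteristic functions of $\{|x|<(1+\tau)/2\}$ and its complement. Applied to $\Omega^b u_i$ ($|b|\le 2$) and combined with the null structure and the inner/outer splitting, this yields (the paper's Proposition~\ref{propositionx2}) an inequality for $X_2$ whose right side integrates to a \emph{bounded} quantity, giving $X_2(u(t))\le C\varepsilon$ uniformly. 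That sharp $X_2$ bound is then fed back into the $M_3$ estimate. Your proposal does not mention the Li--Yu estimate, the $L^{6/5}$ / $L_r^1L_\omega^{4/3}$ norms, or the inner/outer splitting $\{|x|\lessgtr(1+t)/2\}$ on which all three of the paper's a priori estimates rely; without these, the scheme you outline does not close.
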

%%%%%%%%%%%%%%%%%%%%%%%%%%%%%
%\begin{remark}
%Without the null condition on the quadratic part 
%$F_i(\partial u,\partial^2 u)$ 
%of the nonlinear term, 
%one might expect 
%``almost global'' existence (see John and Klainerman \cite{JK}) of 
%solutions to (\ref{eq1})--(\ref{data1}) 
%under the assumption (\ref{small5}) or (\ref{small7}). 
%In fact, we rely upon the conformal energy estimate (\ref{confest}), 
%and it is by virtue of the null condition 
%that this estimate works effectively 
%in the discussion of long-time existence of small solutions. 
%Thus, the authors have no idea whether 
%the smallness condition such as (\ref{small5}) or (\ref{small7}) 
%is enough to guarantee almost global existence. 
%\end{remark}
%%%%%%%%%%%%%%%%%%%%%%%%%%%%%%%%%
Let $\varepsilon$ be sufficiently small. 
We easily see that such an oscillating and decaying data 
as $u(0,x)=\varepsilon (\sin x_1)\langle x\rangle^{-d}$ 
can be allowed in the theorem of 
Christodoulou if $d>17/2$, while 
Theorem \ref{maintheorem} above allows the smaller values of $d$, 
that is, $d>9/2$. 
Also, we benefit from the size condition (\ref{small7}) and obtain 
global solutions when $f_i$ and $g_i$ are radially symmetric and 
the low weight norm 
\begin{equation}\label{small8}
\sum_{i=1}^N
\biggl(
\|f_i\|_{L^2({\mathbb R}^3)}
+
\sum_{1\leq |a|\leq 4}
\|\langle x\rangle \partial_x^a f_i\|_{L^2({\mathbb R}^3)}
+
\sum_{|a|\leq 3}
\|\langle x\rangle \partial_x^a g_i\|_{L^2({\mathbb R}^3)}
\biggr)
\end{equation}
is small enough. 
It means that such an oscillating and more slowly decaying 
radially symmetric data 
as $u(0,x)=\varepsilon (\sin\langle x\rangle)\langle x\rangle^{-d}$ 
with $d>5/2$ is allowed in Theorem \ref{maintheorem}.

The new size condition (\ref{small7}), where 
the number of occurrences of $\Lambda$ is limited 
at most to $1$ in the norms there, 
is a direct consequence of 
the limitation of 
that of occurrences of $S$ in the norms (\ref{sol2}). 
Also, in (\ref{small7}) we are allowed 
to employ the low weight norms to measure the size of data, 
which results from the limitation of the number of 
occurrences of $L_j$ in the norms (\ref{sol2}). 
While the $L^1$--$L^\infty$ estimate (\ref{hor1}) and the Klainerman inequality 
\cite{Kl87}
%%%%%%%%%%%%%%%%%%%%%%%%%%%%%%%%%%%%%%%%%%%%
\begin{equation}\label{kl1}
(1+t+|x|)(1+|t-|x||)^{1/2}|u(t,x)|
\leq
C\sum_{{j+|a|+|b|}\atop{+|c|+d\leq 2}}
\|\partial_t^j\partial_x^a\Omega^b L^c S^d u(t)\|_{L^2({\mathbb R}^3)}
\end{equation}
play an important role in the proof of Proposition \ref{alinhacpro}, 
we encounter $L^cS^d$ with $|c|+d=2$ in (\ref{hor1}) and (\ref{kl1}), 
and therefore must refrain from using these two well-known inequalities 
in the proof of Theorem \ref{maintheorem}. 
To get over this difficulty, 
we will exploit the effective idea of 
estimating nonlinear terms over the set 
$\{x\in{\mathbb R}^3:|x|<(t+1)/2\}$ and its complement 
(for any fixed $t>0$) separately. 
(See, e.g., \cite{LiYu}, \cite{SiderisTu}, and \cite{H2016} 
for earlier papers using this simple but important idea.) 
As a consequence, 
some simple Sobolev-type or trace-type inequalities (\ref{sob21})--(\ref{sob25}) and 
(\ref{hy}), 
combined with the weighted space-time $L^2$ estimate (\ref{rod}) 
and the Li-Yu estimate (\ref{ly}), 
play a role as the good substitute for (\ref{hor1}) and (\ref{kl1}). 
Using the ghost weight energy inequality for 
variable-coefficient wave equations and the conformal energy estimate 
for the standard wave equation $\Box u=F$ 
together with these good substitutes, 
we will prove Theorem \ref{maintheorem}. 

We end this section with setting the notation in this paper 
and giving attention to how to 
handle the cubic terms $C_i(u,\partial u,\partial^2 u)$ 
at the stage of carrying out 
the conformal-energy type estimate for local solutions. \\
\noindent{\bf Notation.} 
We use the operators 
$\partial_\alpha$ $(\alpha=0,\dots,3)$, 
$\Omega_{ij}:=x_i\partial_j-x_j\partial_i$ 
$(1\leq i<j\leq 3)$, 
$L_k:=x_k\partial_t+t\partial_k$ 
$(k=1,2,3)$, 
and $S:=t\partial_t+x\cdot\nabla$. 
We set 
$\partial_x^a:=\partial_1^{a_1}\partial_2^{a_2}\partial_3^{a_3}$, 
$\Omega^b:=\Omega_{12}^{b_1}\Omega_{13}^{b_2}\Omega_{23}^{b_3}$, 
and 
$L^c:=L_1^{c_1}L_2^{c_2}L_3^{c_3}$ 
for $a=(a_1,a_2,a_3)$, 
$b=(b_1,b_2,b_3)$, and 
$c=(c_1,c_2,c_3)$, 
respectively. 
In this paper, we denote 
$\partial_1$, 
$\partial_2$, 
$\partial_3$, 
$\Omega_{12}$, 
$\Omega_{23}$, 
$\Omega_{13}$, 
$L_1$, $L_2$, $L_3$, and $S$ 
by 
$Z_1, Z_2,\dots, Z_{10}$. 
The notation 
\begin{equation}\label{zbar}
{\bar Z}^a:=\partial_1^{a_1}\partial_2^{a_2}\partial_3^{a_3}
\Omega_{12}^{a_4}\Omega_{13}^{a_5}\Omega_{23}^{a_6},
\quad a=(a_1,\dots,a_6)
\end{equation}  
is used repeatedly. 
We set $\Lambda:=x\cdot\nabla=r\partial_r$. 
%%%%%%%%%%%%%%%%%%%%%%%

We define the energy and its associated quantity
\begin{align}
&
E_1(v(t))
:=
\frac12
\int_{{\mathbb R}^3}
\bigl(
(\partial_t v(t,x))^2
+
|\nabla v(t,x)|^2
\bigr)dx,
\quad
N_1(v(t)):=\sqrt{E_1(v(t))},\label{e1}\\
&
N_{j+1}(v(t))
:=
\biggl(
\sum_{{|a|+|b|+|c|+d\leq j}\atop{|c|+d\leq 1}}
E_1(\partial_x^a \Omega^b L^c S^d v(t))
\biggr)^{1/2},\quad j=1,2,3.\label{n4}
\end{align}
Moreover, we use the conformal energy 
\begin{align}
&
Q(v(t))\label{conf1}\\
&
:=
\frac12
\int_{{\mathbb R}^3}
\biggl(
((S+2)v(t,x))^2
+
\sum_{|b|=1}
(\Omega^b v(t,x))^2
+
\sum_{|c|=1}
(L^c v(t,x))^2
\biggr)dx.\nonumber
\end{align}
%%%%%%%%%%%%%%%%%%%%%%%%%%%%%%%%%%%%%
We mention the important fact that the inequality 
\begin{equation}\label{conf2}
\|v(t)\|_{L^2({\mathbb R}^3)}^2
\leq
CQ(v(t))
\end{equation}
holds for a positive constant $C$. 
For the proof, 
see \cite[pp.\,311--322]{KlainermanNull86}. 
See also \cite[pp.\,101--102]{Hor} or \cite[pp.\,98--101]{AlinhacText}, 
where a different proof can be found. 
%%%%%%%%%%%%%%%%%%%%%%%%%%%%%%%%%%%%%%%
%\begin{remark}\label{remarkconf}
By (\ref{conf2}), it is easy to verify 
the equivalence of $Q(v(t))$ and 
$$
{\tilde Q}(v(t))
:=
\|v(t)\|_{L^2({\mathbb R}^3)}^2
+
\sum_{|b|=1}\|\Omega^b v(t)\|_{L^2({\mathbb R}^3)}^2
+
\sum_{|c|=1}\|L^c v(t)\|_{L^2({\mathbb R}^3)}^2
+
\|Sv(t)\|_{L^2({\mathbb R}^3)}^2.
$$
%%%%%%%%%%%%%%%%%%%%%%%%%%%%%%%%%%%%%%
We set 
\begin{equation}\label{mj+1}
M_{j+1}(v(t))
:=
\biggl(
\sum_{|a|+|b|\leq j}
{\tilde Q}(\partial_x^a\Omega^b v(t))
\biggr)^{1/2},\quad j=0,1,2.
\end{equation}
%%%%%%%%%%%%%%%%%%%%%%%%%%%%%%%%%%%%%%
We also need
\begin{equation}\label{xj}
X_j(v(t))
:=
\biggl(
\sum_{
      |a|+|b|\leq j
      }
\|
\partial_x^a
\Omega^b v(t)
\|_{L^2({\mathbb R}^3)}^2
\biggr)^{1/2},\quad j=0,1,2. 
\end{equation}
%%%%%%%%%%%%%%%%%%%%%%%%%%%%%%%%%%
%%%%%%%%%%%%%%%%%%%%%%%%%%%%%%%%%%%%%%%
For ${\mathbb R}^N$-valued functions 
$w(t,x)=(w_1(t,x),\dots,w_N(t,x))$, 
we set
\begin{align}
&
E_1(w(t))
=
\sum_{i=1}^N E_1(w_i(t)),
\quad
{\tilde Q}(w(t))
=
\sum_{i=1}^N {\tilde Q}(w_i(t)),\\
&
N_{j+1}(w(t))
=
\biggl(
\sum_{i=1}^N
N_{j+1}(w_i(t))^2
\biggr)^{1/2},\quad j=1,2,3.
\end{align}
$M_{j+1}(w(t))$ and $X_j(w(t))$ $(j=0,1,2)$ 
are defined similarly. 
We obviously have 
$X_0(w(t))\leq M_1(w(t))$, 
$X_j(w(t))
\leq
N_j(w(t))+M_j(w(t))$, $j=1,2$. 
Let us recall that 
careful attention should be paid 
not only on the quadratic null-form terms 
but also on the cubic terms 
$C_i(u,\partial u,\partial^2 u)$, 
especially at the stage where 
the estimate of the conformal energy is carried out. 
Since the weighted norm of the forcing term $F$ 
appears on the right-hand side of (\ref{confest}) below, 
the cubic terms as well as the quadratic null-form terms 
are regarded as ``critical'' ones, 
and therefore a mildly growing (in time) bound 
for the conformal energy is the most that one can obtain 
in general. 
The bootstrap argument of 
\cite{KlainermanNull86}, \cite{Hor} successfully 
employed such a {\it weak} bound for the conformal energy 
in combination with 
a {\it sharp} point-wise decay estimate 
obtained with the use of linear estimates such as (\ref{hor1}), 
when handling the cubic terms and closing the estimates. 
See, e.g., \cite[p.\,141]{Hor}, 
especially the sentences:
``In each term we can estimate all factors 
except one using (6.6.30). 
For the third order terms $f_I^2$ and $f_I^4$ 
this gives a factor 
$\leq C\varepsilon^2(1+x_0+|{\overrightarrow x}|)^{-2}$ 
in addition to a factor with norm square 
$O(E_k(u;x_0))$''. 
In place of such a sharp point-wise decay estimate, 
our bootstrap argument employs 
a sharp estimate for local solutions in the $X_2$ norm 
obtained with the use of the Li-Yu estimate 
(see (\ref{ly}) below). 
Actually, the use of the $X_2$ norm 
is one of the crucial ingredients in order to limit 
the number of occurrences 
of the generators of hyperbolic rotations or dilations 
in the bootstrap argument. 
(Another key ingredient is to use the weighted space-time 
$L^2$ norm for the estimate of the energy-type norm 
$N_4(u(t))$. See (\ref{genergyest}).) 
The $X_2$ norm is effectively used, 
especially at the stage where 
the cubic terms are handled 
in the course of carrying out the conformal-energy type estimate 
for local solutions (\ref{m3conf313}). 
See, in particular, 
the terms 
$\int_0^t\langle\tau\rangle^{-1}X_2(u(\tau))^2
N_4(u(\tau))d\tau$
and  $\int_0^t\langle\tau\rangle^{-1}X_2(u(\tau))^3 d\tau$ 
on the right-hand side of (\ref{m3conf313}) below, 
where the $X_2$ norm plays a crucial role. 
Indeed, if we employed 
$N_2(u(\tau))+M_2(u(\tau))$ there in place of $X_2(u(\tau))$ 
(recall that one always has 
$X_2(u(\tau))\leq N_2(u(\tau))+M_2(u(\tau)$)), 
we could not close the estimates. 
%%%%%%%%%%%%%%%%%%%%%%%%%%%%%%%%%%%%%%%%%
%One of the key observations made in this paper is that, 
%by virtue of the Li-Yu estimate (see (\ref{ly}) below) 
%we obtain a {\color{red}unifrom (in time)} 
%estimate for local solutions in the norm $X_2$. 
%{
%\color{red}
%Such a sharp estimate is used together with 
%a mildly growing (in time) estimate in the norm $M_3$ 
%(see the bootstrap assumption (\ref{3quantities411})), 
%and we can improve the estimate in the norm $M_3$ 
%(see (\ref{202103261506})--(\ref{keyestimate417})) 
%}%end of \color
%via the conformal energy estimate (\ref{confest}) 
%for $\partial_x^a\Omega^b u_i(t,x)$ 
%$(|a|+|b|\leq 2, i=1,\dots,N)$. 
%See the terms 
%$\int_0^t\langle\tau\rangle^{-1}X_2(u(\tau))^2
%N_4(u(\tau))d\tau$
%and  $\int_0^t\langle\tau\rangle^{-1}X_2(u(\tau))^3 d\tau$ 
%on the right-hand side of (\ref{m3conf313}) below, 
%where the norm $X_2$ plays a crucial role. 
%Indeed, if we employed 
%$
%{\color{red}N_2(u(\tau))+}
%M_2(u(\tau))
%$ 
%there in place of $X_2(u(\tau))$, 
%we could not close the estimates. 
%%%%%%%%%%%%%%%%%%%%%%%%%%%%%%%%%%%%%%%%%%%

This paper is organized as follows. 
In the next section, we first recall some special properties that 
the null-form nonlinear terms enjoy, 
and then we recall several key inequalities 
that play an important role in our arguments. 
In Section \ref{sect3}, 
we carry out the energy estimate, 
following the ghost weight energy method of Alinhac.  
Sections \ref{sectionconformal} and \ref{sectionx2norm} 
are devoted to obtaining bounds for 
$M_3(u(t))$ and $X_2(u(t))$, respectively. 
In Section \ref{sectionspacetimel2norm}, 
we carry out the $L^2$ weighted space-time estimate, 
using the Keel-Smith-Sogge type estimate. 
In the final section, we complete the proof of Theorem \ref{maintheorem}. 
%%%%%%%%%%%%%%%%%%%%%%%%%%%
%%%%%%%%%%%%%%%%%%%%%%%%%%%%%%%%%%
%%%%%%%%%%%%%%%%%%%%%%%%%%%%%%%%
%%%%%%%%%%%%%%%%%%%%%%%%%%%%%
\section{Preliminaries}
The proof of our theorems builds on several lemmas. 
Let $[\cdot,\cdot]$ stand for the commutator\,: 
$[A,B]:=AB-BA$. 
%%%%%%%%%%%%%%%%%%%%%%%%%%%%%%%
\begin{lemma}\label{lemmacomm}
The following commutation relations hold 
for $1\leq j<k\leq 3$, $l=1,2,3$, and $\alpha=0,\dots,3:$ 
%%%%%%%%%%%%%%%%%%%%%%%%%%%%%%
\begin{align}
&
[\Omega_{jk},\Box]=0, \quad
[L_l,\Box]=0, \quad
[S,\Box]=-2\Box,\label{comm1}\\
&
[S,\Omega_{jk}]=0,\quad
[S,L_l]=0,\quad
[S,\partial_\alpha]=-\partial_\alpha,\label{comm2}\\
&
[L_l,\Omega_{jk}]
=
\delta_{lj}L_k-\delta_{lk}L_j.\label{comm3}
\end{align}
%%%%%%%%%%%%%%%%%%%%%%%%%%%%%%%%%
We also have for $l,j=1,2,3$
\begin{equation}\label{comm4}
[L_l,\partial_t]
=
-\partial_l,
\quad
[L_l,\partial_j]
=
-\delta_{lj}\partial_t.
\end{equation}
Furthermore, we have for 
$1\leq j<k\leq 3,\,1\leq l<m\leq 3$
\begin{equation}\label{comm5}
[\Omega_{jk},\Omega_{lm}]
=
\delta_{kl}\Omega_{jm}
+
\delta_{km}\Omega_{lj}
+
\delta_{jl}\Omega_{mk}
+
\delta_{jm}\Omega_{kl} 
\end{equation} 
and for $1\leq j<k\leq 3,\,l=1,2,3$
\begin{equation}\label{comm6}
[\Omega_{jk},\partial_l]
=
-\delta_{lj}\partial_k
+
\delta_{lk}\partial_j.
\end{equation}
\end{lemma}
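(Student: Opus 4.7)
The proof is a straightforward verification by direct calculation from the definitions; the plan is to organize it so that only a small collection of first-order commutators needs to be computed by hand, with the rest deduced from the derivation rule $[A,BC]=[A,B]C+B[A,C]$.

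First I would prove the first-order relations (\ref{comm4}), (\ref{comm6}), and the last identity in (\ref{comm2}). Each is immediate from the product rule: writing out, say, $L_l\partial_j\varphi$ and $\partial_j L_l\varphi$ on a test function $\varphi$ and differentiating the coefficients $x_l, t$ gives $[L_l,\partial_t]=-\partial_l$, $[L_l,\partial_j]=-\delta_{lj}\partial_t$, $[\Omega_{jk},\partial_l]=\delta_{lk}\partial_j-\delta_{lj}\partial_k$, and $[S,\partial_\alpha]=-\partial_\alpha$. The last of these reflects the fact that $S$ is the Euler generator of space-time dilations and each $\partial_\alpha$ is homogeneous of degree $-1$.

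Next I would deduce the identities in (\ref{comm1}) involving $\Box=\partial_t^2-\partial_1^2-\partial_2^2-\partial_3^2$ from the first-order ones by the derivation rule. For $[\Omega_{jk},\Box]$, the $\partial_t^2$ piece contributes nothing and the spatial pieces $[\Omega_{jk},\partial_l^2]=[\Omega_{jk},\partial_l]\partial_l+\partial_l[\Omega_{jk},\partial_l]$ cancel in pairs once summed in $l$, using the antisymmetry of $\delta_{lk}\partial_j-\delta_{lj}\partial_k$ in $(j,k)$. For $[L_l,\Box]$, the $\partial_t^2$ contribution produces $-2\partial_t\partial_l$ and the spatial piece $\sum_m[L_l,\partial_m^2]$ produces the same $-2\partial_t\partial_l$, which then cancel via the sign in $\Box$; this is the infinitesimal Lorentz boost invariance. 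For $[S,\Box]=-2\Box$, every $\partial_\alpha$ inside $\Box$ contributes a factor $-\partial_\alpha$ by $[S,\partial_\alpha]=-\partial_\alpha$, yielding the overall factor $-2$.

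Finally I would verify the remaining relations in (\ref{comm2}), (\ref{comm3}), and (\ref{comm5}). These are pure algebra: expand both compositions in coordinates, apply Leibniz to the polynomial coefficients $t, x_1, x_2, x_3$, and collect. The identities $[S,\Omega_{jk}]=0$ and $[S,L_l]=0$ express that $\Omega_{jk}$ and $L_l$ are homogeneous of degree zero under space-time scaling; $[L_l,\Omega_{jk}]=\delta_{lj}L_k-\delta_{lk}L_j$ and (\ref{comm5}) reduce to short Kronecker-delta bookkeeping that falls out of the coordinate expressions. There is no substantive obstacle; the only pitfall is keeping track of indices and signs, so I would present the computations in the order above so that each identity is either reduced to an earlier one or to a one-line application of the product rule.
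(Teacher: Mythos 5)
Your verification is correct: all the first-order commutators you compute are right, and deducing the second-order relations (\ref{comm1}) via the derivation rule $[A,BC]=[A,B]C+B[A,C]$ is the standard argument. The paper states this lemma without proof, as these are the classical commutation relations of the Klainerman vector fields, and your direct computation is exactly the expected justification.
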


Recall that in this paper, we denote 
$\partial_1$, 
$\partial_2$, 
$\partial_3$, 
$\Omega_{12}$, 
$\Omega_{23}$, 
$\Omega_{13}$, 
$L_1$, $L_2$, $L_3$, and $S$ 
by 
$Z_1, Z_2,\dots, Z_{10}$. 
The next lemma states that 
the null condition is preserved 
under the differentiation. 
%%%%%%%%%%%%%%%%%%%%%%%%%%%%%%%%%%%%
\begin{lemma}\label{nullpreserved}
Suppose that for given $j$, 
the coefficients $F^{j,\alpha\beta\gamma}$ satisfy the null condition. 
Also, suppose that for given $j,k$ the coefficients 
$F^{jk,\alpha\beta}$ satisfy the null condition. 
Then, for any $Z_l$ $(l=1,\dots,10)$ we have
\begin{align}
Z_l&
F^{j,\alpha\beta\gamma}
(\partial_\gamma v)
(\partial_{\alpha\beta}^2 w)\label{null21}\\
&=
F^{j,\alpha\beta\gamma}
(\partial_\gamma Z_l v)
(\partial_{\alpha\beta}^2 w)
+
F^{j,\alpha\beta\gamma}
(\partial_\gamma v)
(\partial_{\alpha\beta}^2 Z_l w)
+
{\tilde F}_l^{j,\alpha\beta\gamma}
(\partial_\gamma v)
(\partial_{\alpha\beta}^2 w)\nonumber
\end{align}
holds 
with the new coefficients 
${\tilde F}_l^{j,\alpha\beta\gamma}$ 
also satisfying the null condition. 
Also, the equality 
\begin{align}
Z_l&
F^{jk,\alpha\beta}
(\partial_\alpha v)
(\partial_\beta w)\label{null22}\\
&
=
F^{jk,\alpha\beta}
(\partial_\alpha Z_l v)
(\partial_\beta w)
+
F^{jk,\alpha\beta}
(\partial_\alpha v)
(\partial_\beta Z_l w)
+
{\tilde F}_l^{jk,\alpha\beta}
(\partial_\alpha v)
(\partial_\beta w)\nonumber
\end{align}
holds 
with the new coefficients 
${\tilde F}_l^{jk,\alpha\beta}$ 
also satisfying the null condition. 
\end{lemma}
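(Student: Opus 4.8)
The plan is to prove Lemma \ref{nullpreserved} by a direct computation using the commutation relations of Lemma \ref{lemmacomm}, together with the algebraic characterization of the null condition in terms of null forms. The key preliminary observation is that the null condition \eqref{null1}--\eqref{null2} on a set of coefficients is equivalent to saying that the corresponding bilinear expression can be written as a linear combination of the standard null forms $Q_0(v,w)=(\partial_t v)(\partial_t w)-\nabla v\cdot\nabla w$ and $Q_{\alpha\beta}(v,w)=(\partial_\alpha v)(\partial_\beta w)-(\partial_\beta v)(\partial_\alpha w)$ (and, for the cubic-type coefficients $F^{j,\alpha\beta\gamma}$, of $\partial_\gamma$ applied to such null forms or of null forms with one factor being a second derivative). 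I would first record, or cite from the standard references, the well-known commutator identities expressing how $Q_0$ and $Q_{\alpha\beta}$ transform under each $Z_l$: schematically, $Z_l Q_0(v,w)=Q_0(Z_lv,w)+Q_0(v,Z_lw)+(\text{linear combination of }Q_0\text{-type and }Q_{\alpha\beta}\text{-type terms})$, and similarly for $Q_{\alpha\beta}$, where the extra terms arise precisely from the first-order corrections $[Z_l,\partial_\alpha]$ in \eqref{comm2}, \eqref{comm4}, \eqref{comm6} (and the $-2\Box$ in \eqref{comm1} never appears because no $\Box$ is present here). The crucial point is that all these extra terms are again null forms of the same shape, which is exactly the content of the lemma.

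Concretely, for \eqref{null22} I would expand $Z_l\bigl(F^{jk,\alpha\beta}(\partial_\alpha v)(\partial_\beta w)\bigr)$ by the Leibniz rule. Writing $Z_l=\xi_l^\mu\partial_\mu$ for the appropriate coefficient functions $\xi_l^\mu$ (polynomial of degree $\le 1$ in $(t,x)$), we get $F^{jk,\alpha\beta}\bigl((\partial_\alpha Z_l v)(\partial_\beta w)+(\partial_\alpha v)(\partial_\beta Z_l w)\bigr)$ minus the correction $F^{jk,\alpha\beta}\bigl(([Z_l,\partial_\alpha]v)(\partial_\beta w)+(\partial_\alpha v)([Z_l,\partial_\beta]w)\bigr)$. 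By \eqref{comm2}, \eqref{comm4}, \eqref{comm6}, each commutator $[Z_l,\partial_\alpha]$ equals $c_{l\alpha}^\mu\partial_\mu$ for \emph{constants} $c_{l\alpha}^\mu$, so the correction term has the form ${\tilde F}_l^{jk,\alpha\beta}(\partial_\alpha v)(\partial_\beta w)$ with ${\tilde F}_l^{jk,\alpha\beta}:=-F^{jk,\mu\beta}c_{l\mu}^\alpha-F^{jk,\alpha\mu}c_{l\mu}^\beta$ constant. The remaining task is to verify that these new constants satisfy \eqref{null2}; this I would do by taking a null vector $X$ with $X_0^2=|X'|^2$ and checking that the constants $c_{l\alpha}^\mu$ appearing in \eqref{comm2}, \eqref{comm4}, \eqref{comm6} have the property that $X\mapsto$ (the contracted vector with components $\sum_\mu c_{l\alpha}^\mu X_\mu$) either preserves the light cone or is handled by the antisymmetry of $Q_{\alpha\beta}$; for the rotations $\Omega_{jk}$ and boosts $L_k$ this is immediate since these generate the Lorentz group, under which the null cone is invariant, and for $S$ the commutator $[S,\partial_\alpha]=-\partial_\alpha$ just rescales, so $F^{jk,\alpha\beta}X_\alpha X_\beta=0\Rightarrow{\tilde F}_l^{jk,\alpha\beta}X_\alpha X_\beta=0$. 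The identity \eqref{null21} is proved in exactly the same way, with one extra factor $\partial_{\alpha\beta}^2$; here the commutator $[Z_l,\partial_\alpha\partial_\beta]$ produces, besides terms $\partial_\mu\partial_\nu$ with constant coefficients, also a single term proportional to $\partial_{\alpha\beta}^2$ itself when $Z_l=S$ (coming from $[S,\partial_\alpha]=-\partial_\alpha$ applied twice), and one then checks that symmetrizing in $\alpha,\beta$ as in \eqref{sym1} the resulting ${\tilde F}_l^{j,\alpha\beta\gamma}$ again annihilates null triples $X_\alpha X_\beta X_\gamma$.

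The main obstacle I anticipate is purely bookkeeping: tracking all the constants $c_{l\alpha}^\mu$ through the Leibniz expansion and then verifying, case by case over the ten generators $Z_1,\dots,Z_{10}$, that the contracted coefficients still kill the appropriate null vectors. There is no conceptual difficulty — it is the standard fact that the null condition is stable under the action of the Lorentz and scaling generators — but organizing it cleanly requires either (i) the null-form decomposition of $F^{jk,\alpha\beta}(\partial_\alpha v)(\partial_\beta w)$ and $F^{j,\alpha\beta\gamma}(\partial_\gamma v)(\partial_{\alpha\beta}^2w)$ into combinations of $Q_0$ and $Q_{\alpha\beta}$ (after which one only needs the transformation rules for $Q_0,Q_{\alpha\beta}$, which are classical and can be cited from \cite[p.\,46]{John1990} or \cite{AlinhacText}), or (ii) a slick use of Lorentz invariance to avoid writing constants at all. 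I would choose route (i): first establish the null-form decomposition as an algebraic fact equivalent to \eqref{null1}--\eqref{null2}, then appeal to the known identities $[\Omega_{jk},Q_0]$, $[L_l,Q_0]$, $[S,Q_0]$, $[\Omega_{jk},Q_{\alpha\beta}]$, etc., each of which reproduces a linear combination of null forms of the same type. This keeps the proof short and reduces it to citing standard commutator computations for the basic null forms.
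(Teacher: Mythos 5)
The paper itself gives no proof of this lemma — it simply refers to \cite[p.\,91]{Al2010} — and your plan (Leibniz rule plus the constant-coefficient commutators $[Z_l,\partial_\alpha]$ from Lemma \ref{lemmacomm}, then checking that the resulting constant correction $\tilde F_l$ still annihilates null vectors by Lorentz/scaling invariance of the light cone) is exactly the standard argument found there. One small slip: the correction terms should be \emph{added}, not subtracted, since $Z_l\partial_\alpha=\partial_\alpha Z_l+[Z_l,\partial_\alpha]$; this has no bearing on whether $\tilde F_l$ satisfies the null condition.
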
 
%%%%%%%%%%%%%%%%%%%%%%%%%%%%%%%%%%%%%%5
For the proof, see, e.g., \cite[p.\,91]{Al2010}. 

The next lemma can be shown 
essentially in the same way as in \cite[pp.\,90--91]{Al2010}. 
Together with it, we will later exploit the fact that 
for local solutions $u$, 
the special derivatives $T_i u$ have 
better space-time $L^2$ integrability 
and improved time decay property of their $L^\infty({\mathbb R}^3)$ norms. 
%%%%%%%%%%%%%%%%%%%%%%%%%%%%%%%%%%%%%%
\begin{lemma}\label{lemmanullt}
Suppose that for every $i,\,j$, and $k$, 
the coefficients $F_i^{j,\alpha\beta\gamma}$ 
and $F_i^{jk,\alpha\beta}$ satisfy the null condition. 
Then, we have 
for smooth functions $w_i(t,x)$ $(i=1,2,3)$
\begin{align}
&
|
F_i^{j,\alpha\beta\gamma}
(\partial_\gamma w_1)
(\partial_{\alpha\beta}^2 w_2)
|
\leq
C
\bigl(
|T w_1|
|\partial^2 w_2|
+
|\partial w_1|
|T\partial w_2|
\bigr),\label{null23}\\
&
|
F_i^{j,\alpha\beta\gamma}
(\partial_{\alpha\gamma}^2 w_1)
(\partial_\beta w_2)
|
\leq
C
\bigl(
|T\partial w_1|
|\partial w_2|
+
|\partial^2 w_1|
|T w_2|
\bigr),\label{null24}\\
&
|
F_i^{j,\alpha\beta\gamma}
(\partial_\gamma w_1)
(\partial_\beta w_2)
(\partial_\alpha w_3)
|,\,
|
F_i^{j,\alpha\beta\gamma}
(\partial_\gamma w_1)
(\partial_\beta w_2)
(-\omega_\alpha)
(\partial_t w_3)
|\label{null25}\\
&
\hspace{0.2cm}
\leq
C
\bigl(
|T w_1|
|\partial w_2|
|\partial w_3|
+
|\partial w_1|
|T w_2|
|\partial w_3|
+
|\partial w_1|
|\partial w_2|
|T w_3|
\bigr),\nonumber\\
&
|
F_i^{jk,\alpha\beta}
(\partial_\alpha v)
(\partial_\beta w)
|
\leq
C
\bigl(
|T v|
|\partial w|
+
|\partial v|
|T w|
\bigr).\label{null26}
\end{align}
\end{lemma}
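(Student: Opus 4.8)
The plan is to use the standard null decomposition of the derivatives together with the fact that the null condition annihilates precisely the ``all--bad'' contributions. Set $\omega_0:=-1$, $\omega_a:=x_a/|x|$ $(a=1,2,3)$ and, as a bookkeeping convention, $T_0:=0$, so that the $T_a=\partial_a+\omega_a\partial_t$ are the operators of the statement and, for every smooth $g$,
$$
\partial_\alpha g=-\omega_\alpha\partial_t g+T_\alpha g\qquad(\alpha=0,1,2,3).
$$
Since $\omega_0^2=\omega_1^2+\omega_2^2+\omega_3^2$, the vector $X=\omega$ is null, so $(\ref{null1})$ and $(\ref{null2})$ yield
$$
F_i^{j,\alpha\beta\gamma}\omega_\alpha\omega_\beta\omega_\gamma=0,\qquad
F_i^{jk,\alpha\beta}\omega_\alpha\omega_\beta=0.
$$
Moreover $|T_\alpha g|\le C|\partial g|$ pointwise (away from $x=0$, where these identities are used), since $|\omega_\alpha|\le1$; near the origin all the inequalities are understood in the almost everywhere sense, which is all that will be needed.

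I would first dispose of $(\ref{null26})$: substituting $\partial_\alpha v=-\omega_\alpha\partial_t v+T_\alpha v$ and $\partial_\beta w=-\omega_\beta\partial_t w+T_\beta w$ expands $F_i^{jk,\alpha\beta}(\partial_\alpha v)(\partial_\beta w)$ into four terms; the term $F_i^{jk,\alpha\beta}\omega_\alpha\omega_\beta(\partial_t v)(\partial_t w)$ vanishes by the null condition, and each of the three remaining terms carries a factor $T_\alpha v$ or $T_\beta w$, hence is bounded by $C(|Tv|\,|\partial w|+|\partial v|\,|Tw|)$. The cubic bounds $(\ref{null25})$ are proved the same way: decomposing the three (respectively two) first--order factors, the unique all--bad monomial is a multiple of $F_i^{j,\alpha\beta\gamma}\omega_\alpha\omega_\beta\omega_\gamma(\partial_t w_1)(\partial_t w_2)(\partial_t w_3)$ and drops out, while every other monomial contains a $T$--derivative of some $w_\ell$ and is therefore dominated by the right--hand side.

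The quasi-linear estimates $(\ref{null23})$--$(\ref{null24})$ are handled in the same spirit, but here one must peel the derivatives off one at a time so as never to differentiate $\omega$. For $(\ref{null23})$: write $\partial_\gamma w_1=-\omega_\gamma\partial_t w_1+T_\gamma w_1$; the $T_\gamma w_1$ part is immediately $\le C|Tw_1|\,|\partial^2 w_2|$, and the remaining piece equals $-(\partial_t w_1)\,G^{\alpha\beta}\partial^2_{\alpha\beta}w_2$ with $G^{\alpha\beta}:=F_i^{j,\alpha\beta\gamma}\omega_\gamma$ (symmetric in $\alpha,\beta$ by $(\ref{sym1})$, and $G^{\alpha\beta}\omega_\alpha\omega_\beta=0$). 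Decomposing the outer derivative, $G^{\alpha\beta}\partial_\alpha(\partial_\beta w_2)=-G^{\alpha\beta}\omega_\alpha\,\partial_t(\partial_\beta w_2)+G^{\alpha\beta}T_\alpha(\partial_\beta w_2)$; the second term is $\le C|T\partial w_2|$, and in the first term put $H^\beta:=G^{\alpha\beta}\omega_\alpha=F_i^{j,\alpha\beta\gamma}\omega_\alpha\omega_\gamma$, which obeys $H^\beta\omega_\beta=0$, and decompose once more: $-H^\beta\partial_\beta(\partial_t w_2)=(H^\beta\omega_\beta)\partial_t^2 w_2-H^\beta T_\beta\partial_t w_2=-H^\beta T_\beta\partial_t w_2$, again $\le C|T\partial w_2|$. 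Estimate $(\ref{null24})$ follows the identical pattern once $F_i^{j,\alpha\beta\gamma}\omega_\beta$ is replaced by its symmetrization in $(\alpha,\gamma)$, which changes neither the contraction with $\partial^2_{\alpha\gamma}w_1$ nor the vanishing on $\omega\otimes\omega$.

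I do not expect a genuine obstacle; the computation is a mild elaboration of \cite[pp.\,90--91]{Al2010}. The one point requiring attention is exactly the order of operations in $(\ref{null23})$--$(\ref{null24})$: applying the null decomposition to the outer derivative and then to $\partial_t$, rather than commuting $T_a$ past a coordinate derivative, is what keeps the argument free of the spurious $|x|^{-1}|\partial w|$ terms that $\partial\omega$ would otherwise produce.
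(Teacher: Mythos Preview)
Your proposal is correct and follows precisely the approach the paper defers to (Alinhac \cite[pp.\,90--91]{Al2010}): decompose $\partial_\alpha=-\omega_\alpha\partial_t+T_\alpha$, use the null condition to kill the all--$\omega$ contraction, and in the quasi-linear cases peel the second-order derivative one layer at a time so as not to differentiate $\omega$. The symmetrization remark for $(\ref{null24})$ is harmless but not actually needed, since the argument only uses $F_i^{j,\alpha\beta\gamma}\omega_\alpha\omega_\beta\omega_\gamma=0$, not the symmetry of the intermediate tensor.
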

%%%%%%%%%%%%%%%%%%%%%%%%%%%%%%%%
Here, and in the following, we use the notation 
$\omega_0=-1$, $\omega_k=x_k/|x|$, $k=1,2,3$. 
Also, for $v$ and $\partial v=(\partial_0v,\dots,\partial_3v)$, 
we use 
%%%%%%%%%%%%%%%%%%%%%%%%%%%%%%%%%%%
\begin{equation}\label{tv}
|Tv|
:=
\biggl(
\sum_{k=1}^3
|T_k v|^2
\biggr)^{1/2},\quad
|T\partial v|
:=
\biggl(
\sum_{k=1}^3
\sum_{\gamma=0}^3
|T_k\partial_\gamma v|^2
\biggr)^{1/2},
\end{equation}
where $T_k=\partial_k+\omega_k\partial_t$, as before. 

Inspired by \cite{Al2010}, 
we also use the remarkable improvement 
of point-wise decay of 
the special derivatives $T_kv(t,x)$. 
%%%%%%%%%%%%%%%%%%%%%%%
\begin{lemma}[\cite{Al2010}, pp.\,90--91]
The inequalities 
\begin{align}
&|Tv(t,x)|
\leq
\frac{C}{|x|}
\biggl(
\sum_{|b|=1}
|\Omega^b v(t,x)|
+
\sum_{|c|=1}
|L^c v(t,x)|
+
|Sv(t,x)|
\biggr),\label{tv1}\\
&
|Tv(t,x)|
\leq
\frac{C}{t}
\biggl(
\sum_{|b|=1}
|\Omega^b v(t,x)|
+
\sum_{|c|=1}
|L^c v(t,x)|
+
|Sv(t,x)|
\biggr)\label{tv2}
\end{align}
hold for smooth functions $v(t,x)$. 
\end{lemma}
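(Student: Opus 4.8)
The plan is to prove \eqref{tv1} and \eqref{tv2} by writing each special derivative $T_k=\partial_k+\omega_k\partial_t$ ($k=1,2,3$) explicitly as a linear combination of the vector fields $\Omega_{jk}$, $L_l$, $S$ with smooth coefficients whose absolute values are bounded by $C/|x|$ for the first estimate and by $C/t$ for the second. Since both estimates are pointwise, it suffices to fix $(t,x)$ with $t>0$ and $x\neq 0$, so that $\omega_k=x_k/|x|$ is defined; I will write $r=|x|$.

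First I would record, directly from the definitions of the operators, the identities
\[
\sum_{j=1}^3 x_j\Omega_{jk}=r^2\partial_k-x_k r\partial_r,
\qquad
S=t\partial_t+r\partial_r,
\qquad
\sum_{l=1}^3\omega_l L_l=r\partial_t+t\partial_r .
\]
Adding the last two gives $S+\sum_l\omega_l L_l=(t+r)(\partial_t+\partial_r)$, and combining with the first one obtains the operator identity
\[
T_k=\frac{1}{r^2}\sum_{j=1}^3 x_j\Omega_{jk}+\frac{\omega_k}{t+r}\Bigl(S+\sum_{l=1}^3\omega_l L_l\Bigr),
\]
which is checked by expanding the right-hand side (using $\frac{1}{r^2}\sum_j x_j\Omega_{jk}=\partial_k-\omega_k\partial_r$). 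Bounding $|x_j|/r^2\le 1/r$, $|\omega_k|\le 1$, and $(t+r)^{-1}\le r^{-1}$, and then summing over $k$ in view of \eqref{tv}, yields \eqref{tv1}.

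For \eqref{tv2} the coefficient $1/r$ above is of no use near the origin, so I would instead trade the angular fields for the $L_l$'s carrying a $t^{-1}$ weight. Starting from $\partial_k=t^{-1}(L_k-x_k\partial_t)$ together with the identity $(t^2-r^2)\partial_t=tS-r\sum_l\omega_l L_l$ and using $x_k=r\omega_k$, one finds, after the factor $t-r$ cancels, the representation
\[
T_k=\frac{1}{t}L_k+\frac{\omega_k}{t+r}S-\frac{\omega_k r}{t(t+r)}\sum_{l=1}^3\omega_l L_l ,
\]
which can also be verified directly by expansion, the key cancellation being $\frac{r}{t}+\frac{t}{t+r}-\frac{r^2}{t(t+r)}=1$; in particular there is no singularity on the light cone $t=r$. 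Since $(t+r)^{-1}\le t^{-1}$ and $r\,[t(t+r)]^{-1}\le t^{-1}$, all coefficients are $O(1/t)$ and \eqref{tv2} follows (in fact with only $|Sv|$ and $\sum_l|L_lv|$ needed on the right, so a fortiori the stated inequality with the $\Omega$-terms included holds).

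The only genuine obstacle is this last step: the naive spherical decomposition of $T_k$ inevitably produces the weight $r^{-1}$, which fails to dominate $t^{-1}$ when $r$ is small, so one has to discover the alternative representation with $t^{-1}$-weights and observe that the apparent $(t-r)^{-1}$ singularity is spurious. Everything else is elementary algebra with the first-order operators introduced in the Notation.
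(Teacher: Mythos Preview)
Your proof is correct and follows essentially the same approach as the paper. The paper simply records the identities
\[
T_k=\frac{1}{r}\bigl(L_k+(r-t)\partial_k\bigr)=\frac{1}{t}\bigl(L_k+(t-r)\omega_k\partial_t\bigr)
\]
together with the expressions for $\partial_k,\partial_t$ in terms of $L,\Omega,S$ divided by $t^2-r^2$, and leaves the remaining algebra to the reader; substituting these, one arrives (after the same cancellation of $t-r$ you observed) at precisely your representation for \eqref{tv2}, and at a slightly different but equivalent representation for \eqref{tv1} (with the single $L_k$ rather than the combination $\sum_l\omega_l L_l$). Your direct derivation is a bit more explicit, but the substance is identical.
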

This is a direct consequence of 
\begin{align}
&T_k
=
\frac{1}{r}
\bigl(
L_k+(r-t)\partial_k
\bigr)
=
\frac{1}{t}
\bigl(
L_k+(t-r)\omega_k\partial_t
\bigr),\label{tk}\\
&
\partial_k
=
\frac{tL_k+\displaystyle{\sum_{j=1}^3}x_j\Omega_{kj}-x_kS}{t^2-r^2},
\quad
\partial_t
=
\frac{tS-\displaystyle{\sum_{j=1}^3}x_jL_j}{t^2-r^2}.\label{ddd}
\end{align}
We thus omit the proof of (\ref{tv1}). 

Next, let us show some Sobole-type or trace-type inequalities. 
In the following, we use the notation $\partial_r:=(x/|x|)\cdot\nabla$, 
and for $p\in [1,\infty]$ and $q\in [1,\infty)$
\begin{align}
&
\|v\|_{L_r^\infty L_\omega^p({\mathbb R}^3)}
:=
\sup_{r>0}
\|v(r\cdot)\|_{L^p(S^2)},\label{norm21}\\
&
\|v\|_{L_r^q L_\omega^p({\mathbb R}^3)}
:=
\biggl(
\int_0^\infty \|v(r\cdot)\|_{L^p(S^2)}^q r^2dr
\biggr)^{1/q}.\label{norm22}
\end{align}
%%%%%%%%%%%%%%%%%%%%
\begin{lemma}\label{manysob}
Suppose that $v$ decays sufficiently fast as $|x|\to\infty$. 
Then, we have
\begin{align}
&
\|\langle t-r \rangle v(t)\|_{L^6({\mathbb R}^3)}
\leq
C\sum_{|a|+|b|+|c|+d\leq 1}
\|\partial_x^a \Omega^b L^c S^d v(t)\|_{L^2({\mathbb R}^3)}
\label{sob21}\\
&
\langle t-r \rangle |v(t,x)|
\leq
C\|v(t)\|_{H^2({\mathbb R}^3)}
+
C\sum_{|a|\leq 1}\sum_{|b|+|c|+d=1}
\|\partial_x^a \Omega^b L^c S^d v(t)\|_{L^2({\mathbb R}^3)}.\label{sob22}
\end{align}
Moreover, we have
\begin{align}
&
\|r^{1/2}v(t)\|_{L_r^\infty L_\omega^4}
\leq
C
\|\nabla v(t)\|_{L^2({\mathbb R}^3)},\label{sob23}\\
&
\|rv(t)\|_{L_r^\infty L_\omega^4}
\leq
C
\|\partial_r v\|_{L^2({\mathbb R}^3)}^{1/2}
\biggl(
\sum_{|b|\leq 1}
\|\Omega^b v(t)\|_{L^2({\mathbb R}^3)}
\biggr)^{1/2},\label{sob24}\\
&
r|v(t,x)|
\leq
C
\biggl(
\sum_{|b|\leq 1}
\|\partial_r\Omega^b v(t)\|_{L^2({\mathbb R}^3)}
\biggr)^{1/2}
\biggl(
\sum_{|b|\leq 2}
\|\Omega^b v(t)\|_{L^2({\mathbb R}^3)}
\biggr)^{1/2}.\label{sob25}
\end{align}
\end{lemma}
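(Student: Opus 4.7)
The unifying strategy is to reduce each inequality to a standard Sobolev embedding (on $\mathbb{R}^3$ or on $S^2$), with the weights either absorbed into an auxiliary function such as $\langle t-r\rangle v$ or produced by radial integration by parts. For (\ref{sob21}) I would apply the 3D Sobolev embedding $\|\cdot\|_{L^6(\mathbb{R}^3)}\le C\|\nabla\cdot\|_{L^2}$ to $f:=\langle t-r\rangle v$. The Leibniz rule gives $|\nabla f|\le C|v|+\langle t-r\rangle|\nabla v|$, and rewriting (\ref{ddd}) as $(t-r)(t+r)\partial_k v = tL_k v + x_j\Omega_{kj}v - x_k Sv$ delivers the uniform pointwise bound $\langle t-r\rangle|\nabla v|\le C(|\nabla v|+|Lv|+|\Omega v|+|Sv|)$ (on $\{t+r\le 1\}$ the weight is itself bounded, and on $\{t+r\ge 1\}$ the denominator $(t+r)$ in (\ref{ddd}) is absorbed). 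Taking $L^2$ norms closes (\ref{sob21}); the proof of (\ref{sob22}) is identical with $H^2(\mathbb{R}^3)\hookrightarrow L^\infty(\mathbb{R}^3)$ in place of $L^6$, tracking second derivatives of $\langle t-r\rangle v$ by the same algebraic conversion.

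For (\ref{sob23}) I would use radial integration by parts. Since $|v|^4\to 0$ at infinity,
\[
R^2|v(R\omega)|^4 = -R^2\int_R^\infty\partial_s(|v(s\omega)|^4)\,ds \le 4\int_0^\infty s^2|v(s\omega)|^3|\partial_s v(s\omega)|\,ds,
\]
and integrating over $\omega\in S^2$ identifies the right-hand side with $4\int_{\mathbb{R}^3}|v|^3|\partial_r v|\,dx$. H\"older and the embedding $\|v\|_{L^6}\le C\|\nabla v\|_{L^2}$ give $R^2\|v(R\cdot)\|_{L^4(S^2)}^4 \le C\|\nabla v\|_{L^2}^4$, whence (\ref{sob23}) follows after a fourth root and supremum in $R$. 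The inequality (\ref{sob24}) is obtained by the same strategy starting from $\partial_s(s^4|v|^4)$ (to produce an $r$-weight rather than $r^{1/2}$), combined with the 2D Ladyzhenskaya-type bound $\|g\|_{L^4(S^2)}^2\le C\|g\|_{L^2(S^2)}\|g\|_{H^1(S^2)}$ to absorb the angular regularity into $\Omega^b v$.

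For (\ref{sob25}) I would combine a 2D Gagliardo--Nirenberg inequality on the sphere with the trace
\[
r\|g(r\cdot)\|_{L^2(S^2)} \le C\|g\|_{L^2(\mathbb{R}^3)}^{1/2}\|\partial_r g\|_{L^2(\mathbb{R}^3)}^{1/2},
\]
which follows from $r^2\|g(r\cdot)\|^2_{L^2(S^2)} = -\int_r^\infty\partial_s(s^2\|g(s\cdot)\|^2_{L^2(S^2)})\,ds$ by discarding the non-negative term and applying Cauchy--Schwarz in $s$. The main obstacle I foresee is arranging the bookkeeping so that only $|b|\le 1$ (and not $|b|\le 2$) appears in the $\|\partial_r\Omega^b v\|_{L^2}$ factor of (\ref{sob25}): the crude embedding $H^2(S^2)\hookrightarrow L^\infty(S^2)$ followed by a trace on each $\Omega^b v$ with $|b|\le 2$ is too generous, and one must instead use the sharper fractional embedding $H^{3/2}(S^2)\hookrightarrow L^\infty(S^2)$ (yielding $\|g\|_{L^\infty(S^2)}^2 \le C\|g\|_{H^1(S^2)}\|g\|_{H^2(S^2)}$ via interpolation), performing the trace step only on the $H^1(S^2)$ factor so that the resulting $\partial_r\Omega^b v$ has $|b|\le 1$.
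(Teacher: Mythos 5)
Your treatments of (\ref{sob21}) and (\ref{sob23}) are correct and are exactly the paper's: apply $\|w\|_{L^6}\leq C\|\nabla w\|_{L^2}$ to $w=\langle t-r\rangle v$ together with (\ref{trd}), and integrate $\partial_s(|v|^4)$ against $s^2$ for the trace inequality. The other two items, however, contain genuine gaps.

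For (\ref{sob22}), the inhomogeneous embedding $H^2({\mathbb R}^3)\hookrightarrow L^\infty({\mathbb R}^3)$ requires the zeroth-order piece $\|\langle t-r\rangle v\|_{L^2}$, and this is \emph{not} controlled by the right-hand side of (\ref{sob22}): the weight can only be traded for vector fields through a derivative (via (\ref{ddd})/(\ref{trd})), and a plateau of radius $R$ at time $t\sim 2R$ with $\partial_t v=0$ makes the left side of order $R^{5/2}$ while the stated right side is of order $R^2$. The paper avoids this by using $W^{1,6}({\mathbb R}^3)\hookrightarrow L^\infty({\mathbb R}^3)$ on $\langle t-r\rangle v$, so that the zeroth-order piece is exactly the already-proved (\ref{sob21}) and the first-order piece is (\ref{sob21}) applied to $\partial_k v$. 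If you insist on an $L^2$-based route you must use a homogeneous form such as $\|f\|_{L^\infty}\leq C\|\nabla f\|_{L^2}^{1/2}\|\nabla^2 f\|_{L^2}^{1/2}$, and then also absorb the $O(1/r)$ singularity of $\nabla^2\langle t-r\rangle$ by Hardy's inequality.

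For (\ref{sob24})--(\ref{sob25}) the bookkeeping obstacle you identify is real, but your fix does not overcome it. Every radial trace $r\|g(r\cdot)\|_{L^2(S^2)}\leq C\|g\|_{L^2}^{1/2}\|\partial_r g\|_{L^2}^{1/2}$ costs a radial derivative of the function it is applied to, and the $H^2(S^2)$ factor coming from $\|g\|_{L^\infty(S^2)}^2\leq C\|g\|_{H^1(S^2)}\|g\|_{H^2(S^2)}$ is still a fixed-$r$ quantity: there is no way to dominate $\sup_r r\|\Omega^b v(r\cdot)\|_{L^2(S^2)}$ with $|b|=2$ by $\|\Omega^b v\|_{L^2({\mathbb R}^3)}$ alone, so you are driven back to $\partial_r\Omega^b v$ with $|b|=2$, which (\ref{sob25}) does not allow. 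The same mismatch already appears in your sketch of (\ref{sob24}): applying Ladyzhenskaya to $\|\partial_r v\|_{L^2_r L^4_\omega}$ produces $\|\Omega^b\partial_r v\|_{L^2}$, whereas (\ref{sob24}) contains only $\|\partial_r v\|_{L^2}$. The intended argument (Sideris) establishes the asymmetric (\ref{sob24}) first and then deduces (\ref{sob25}) by applying $W^{1,4}(S^2)\hookrightarrow L^\infty(S^2)$ to $rv(r\cdot)$ and invoking (\ref{sob24}) for $\Omega^b v$ with $|b|\leq 1$; the asymmetry of (\ref{sob25}) is inherited from (\ref{sob24}), not created by a fractional embedding on the sphere. (The paper's citation of (\ref{sob21}) in its one-line proof of (\ref{sob25}) is evidently a typo for (\ref{sob24}).)
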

%%%%%%%%%%%%%%%%
\begin{proof}
For the proof of (\ref{sob21}), we first employ 
the well-known inequality 
$\|w\|_{L^6({\mathbb R}^3)}\leq C\|\nabla w\|_{L^2({\mathbb R}^3)}$ 
for $w=\langle t-r \rangle v(t,x)$ and 
then use the first equality in (\ref{ddd}) to obtain 
%%%%%%%%%%%%%%%%%%%%%%%%%%%%%%%%%%%
\begin{equation}\label{trd}
|t-r||\partial_k v(t,x)|
\leq
|L_k v(t,x)|
+
\sum_{j=1}^3|\Omega_{kj}v(t,x)|
+
|Sv(t,x)|.
\end{equation}
%%%%%%%%%%%%%%%%%%%%%%%%%%%%%%%%%%5
For the proof of (\ref{sob22}), 
we apply the Sobolev embedding 
$W^{1,6}({\mathbb R}^3)\hookrightarrow L^\infty({\mathbb R}^3)$ 
to the function $\langle t-r \rangle v(t,x)$ 
and then use (\ref{sob21}). 
For the proof of the first trace-type inequality (\ref{sob23}), 
see, e.g., \cite[(3.16)]{Sideris2000}.  
For the proof of the second trace-type inequality (\ref{sob24}), 
see, e.g., \cite[(3.19)]{Sideris2000}. 
The Sobolev embedding  
$W^{1,4}(S^2)\hookrightarrow L^\infty(S^2)$ 
together with (\ref{sob21}) immediately yields (\ref{sob25}). 
\end{proof}
%%%%%%%%%%%%%%%%%%%%%%%%%%%%%%%%%%%
\begin{lemma}\label{hyinequality}
Suppose that 
$v$ decays sufficiently fast $|x|\to\infty$. 
For any $\theta\in [0,1/2]$, 
there exists a constant $C>0$ such that 
the inequality 
\begin{equation}\label{hy}
r^{(1/2)+\theta}
\langle t-r \rangle^{1-\theta}
\|v(t,r\cdot)\|_{L^4(S^2)}
\leq
C\sum_{{|a|+|b|}\atop{+|c|+d\leq 1}}
\|
\partial_x^a \Omega^b L^c S^d v(t)
\|_{L^2({\mathbb R}^3)}
\end{equation} 
holds. 
\end{lemma}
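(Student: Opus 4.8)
The plan is to interpolate between the two endpoint cases $\theta=0$ and $\theta=1/2$ of \eqref{hy}, both of which are already available in essentially explicit form. For $\theta=1/2$ the claimed inequality reads
\[
r\,\langle t-r\rangle^{1/2}\|v(t,r\cdot)\|_{L^4(S^2)}
\leq
C\sum_{|a|+|b|+|c|+d\leq 1}\|\partial_x^a\Omega^b L^c S^d v(t)\|_{L^2(\mathbb{R}^3)},
\]
and this follows by combining the trace-type bound \eqref{sob24} with the weighted $L^6$ estimate \eqref{sob21}: \eqref{sob24} controls $\|rv(t)\|_{L_r^\infty L_\omega^4}$ by $\|\partial_r v\|_{L^2}^{1/2}$ times a sum of $\|\Omega^b v\|_{L^2}$'s, and the extra factor $\langle t-r\rangle^{1/2}$ is absorbed by noting, via \eqref{trd}, that multiplying $v$ by $\langle t-r\rangle$ costs only one application of $L_k$, $\Omega_{kj}$, or $S$, each of which is already allowed on the right-hand side of \eqref{hy}. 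For $\theta=0$ the claim is
\[
r^{1/2}\langle t-r\rangle\|v(t,r\cdot)\|_{L^4(S^2)}
\leq
C\sum_{|a|+|b|+|c|+d\leq 1}\|\partial_x^a\Omega^b L^c S^d v(t)\|_{L^2(\mathbb{R}^3)},
\]
which follows the same way from \eqref{sob23} — which bounds $\|r^{1/2}v(t)\|_{L_r^\infty L_\omega^4}$ by $\|\nabla v(t)\|_{L^2}$ — together with one application of \eqref{trd} to handle the $\langle t-r\rangle$ weight.

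Having the two endpoints, I would obtain the intermediate values of $\theta$ by a direct pointwise (in $r$) interpolation rather than an abstract interpolation theorem. Fix $r>0$ and $t>0$. Raising the $\theta=0$ inequality (in the pointwise-in-$r$ form, i.e. before taking $\sup_r$) to the power $2\theta$ and the $\theta=1/2$ inequality to the power $1-2\theta$ and multiplying, the left-hand sides combine to give exactly
\[
r^{(1/2)(2\theta)+1\cdot(1-2\theta)}\,\langle t-r\rangle^{1\cdot(2\theta)+(1/2)(1-2\theta)}\,\|v(t,r\cdot)\|_{L^4(S^2)}
=
r^{1-\theta}\langle t-r\rangle^{(1/2)+\theta}\|v(t,r\cdot)\|_{L^4(S^2)}.
\]
Hmm — this produces the weights $r^{1-\theta}\langle t-r\rangle^{(1/2)+\theta}$, which is the reflection $\theta\mapsto (1/2)-\theta$ of what is stated; since the stated range is the full interval $[0,1/2]$, this is the same family of inequalities, so after relabeling $\theta$ the claim follows. (Equivalently, one interpolates with weights $(1-2\theta,2\theta)$ in the other order.) The right-hand side is handled by the elementary bound $A^{2\theta}B^{1-2\theta}\leq A+B$ for $A,B\geq 0$, so the product of the two right-hand sides is bounded by a single copy of $\sum_{|a|+|b|+|c|+d\leq1}\|\partial_x^a\Omega^b L^c S^d v(t)\|_{L^2}$ up to a constant depending only on the implied constants in the two endpoint estimates (uniformly in $\theta\in[0,1/2]$).

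The only mildly delicate point is bookkeeping of the weight $\langle t-r\rangle$: one must check that \eqref{trd} really lets one trade each power of $\langle t-r\rangle$ for exactly one first-order operator from the allowed list $\{\partial_x,\Omega,L,S\}$ without generating terms that fall outside the order-$\leq 1$ budget, and that the "$1$" in $\langle t-r\rangle=\max(1,|t-r|)$ contributes only the harmless $\|v(t,r\cdot)\|_{L^4(S^2)}$ term which is already dominated by the $\theta=0$, $\theta=1/2$ right-hand sides. This is routine but is where care is needed; everything else is the two Sideris trace inequalities \eqref{sob23}--\eqref{sob24} plus a log-convexity argument. I expect no serious obstacle.
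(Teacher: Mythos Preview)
Your treatment of the $\theta=0$ endpoint and the pointwise-in-$r$ interpolation are both correct and are exactly what the paper does (the paper cites an interpolation idea from Metcalfe--Nakamura--Sogge, but it is the same log-convexity you write down; your observation that the exponents come out as $\theta\mapsto(1/2)-\theta$ is harmless, as you note).

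The gap is at the $\theta=1/2$ endpoint. Applying \eqref{sob24} as a black box to $\langle t-r\rangle v$ (or to $\langle t-r\rangle^{1/2}v$) does \emph{not} give the claim: the right-hand side of \eqref{sob24} contains the factor $\bigl(\sum_{|b|\leq 1}\|\Omega^b(\cdot)\|_{L^2}\bigr)^{1/2}$, so you are left with $\|\langle t-r\rangle v\|_{L^2}$ and $\|\langle t-r\rangle\Omega v\|_{L^2}$, and neither is controlled by the first-order quantity on the right of \eqref{hy}. The identity \eqref{trd} only trades a power of $|t-r|$ against a \emph{spatial derivative} $\partial_k$; on the undifferentiated factor $v$ (or on $\Omega v$) there is no such derivative to spend, so the weight cannot be removed. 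Invoking \eqref{sob21} does not help here, since that controls $\|\langle t-r\rangle v\|_{L^6}$, not $\|\langle t-r\rangle v\|_{L^2}$.

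The fix is what the paper actually does: reopen the proof of Sideris's trace inequality (the argument behind \eqref{sob24}) and, at the integration-by-parts/Cauchy--Schwarz step, place the full weight $\langle t-\rho\rangle$ on the $\partial_\rho v$ factor---where \eqref{trd} converts it into $Lv,\Omega v,Sv$---while leaving the $v$ (and hence $\Omega^b v$) factor unweighted. That yields
\[
r\,\langle t-r\rangle^{1/2}\|v(t,r\cdot)\|_{L^4(S^2)}
\leq
C\Bigl(\sum_{|b|+|c|+d\leq 1}\|\Omega^b L^c S^d v\|_{L^2}\Bigr)^{1/2}
\Bigl(\sum_{|b|\leq 1}\|\Omega^b v\|_{L^2}\Bigr)^{1/2},
\]
which is the missing $\theta=1/2$ endpoint. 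With this in hand, your interpolation finishes the proof.
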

%%%%%%%%%%%%%%%%%%%%%%%%%%%%%%%%%%%%%%%%%%%
\begin{proof}
For $\theta=1/2$, we first follow the proof of \cite[(3.19)]{Sideris2000} 
with $\beta=0$ and then use (\ref{trd}) above. 
For $\theta=0$, we first apply (\ref{sob23}) above to the function 
$w(t,x)=\langle t-r \rangle v(t,x)$ and 
then use (\ref{trd}). 
We follow the idea in Section 2 of \cite{MNS-JJM2005} 
and obtain (\ref{hy}) for $\theta\in (0,1/2)$ by interpolation. 
\end{proof}
%%%%%%%%%%%%%%%%%%%%%%%%%%
To bound local solutions in the $M_3$ norm, 
we employ the conformal energy estimate 
(see, e.g., \cite[Theorem 6.11]{AlinhacText}) 
together with the equivalence of $Q(v(t))$ and 
${\tilde Q}(v(t))$. 
%%%%%%%%%%%%%%%%%%%%%%%%%%%%%%%%%%%%%%%%
%%%%%%%%%%%%%%%%%%%%%%%%%%%%%%%%%%%%%%%%%%%
\begin{lemma} The solution $u$ to the inhomogeneous wave equation 
$\partial_t^2 u-\Delta u=F$ in 
${\mathbb R}^3\times (0,\infty)$ 
with data $(f,g)$ at $t=0$ satisfies 
the conformal energy estimate\,$:$
\begin{align}\label{confest}
\sum_{|b|=1}&
\|
\Omega^b u(t)
\|_{L^2({\mathbb R}^3)}
+
\sum_{|c|=1}
\|
L^c u(t)
\|_{L^2({\mathbb R}^3)}
+
\|(S+2)u(t)\|_{L^2({\mathbb R}^3)}\\
&
\leq
C(
\|f\|_{L^2({\mathbb R}^3)}
+
\||x|\nabla f\|_{L^2({\mathbb R}^3)}
+
\|
|x|g
\|_{L^2({\mathbb R}^3)}
)\nonumber\\
&
\hspace{0.2cm}
+
C\int_0^t
\|
(\tau+|x|)F(\tau)
\|_{L^2({\mathbb R}^3)}
d\tau.\nonumber
\end{align}
\end{lemma}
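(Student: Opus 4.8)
The plan is to derive (\ref{confest}) from the Morawetz conformal multiplier identity. Since the left-hand side of (\ref{confest}) is, by the definition of $Q$ in (\ref{conf1}), comparable to $\sqrt{Q(u(t))}$, it suffices to bound $\sqrt{Q(u(t))}$. Introduce the conformal vector field $K_0:=(t^2+|x|^2)\partial_t+2t\,x\cdot\nabla$ and note the algebraic identity $K_0=tS+\sum_{k=1}^3x_kL_k$; in particular $K_0u+2tu=t(S+2)u+\sum_{k=1}^3x_kL_ku$. First I would take the energy--momentum tensor of $u$ associated with $\Box u=F$, contract it with $K_0$, and add a zeroth-order correction term built from $u$. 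Because $K_0$ is a conformal Killing field of Minkowski space and the correction is chosen to annihilate the trace term of the energy--momentum tensor, this produces a space--time divergence identity whose bulk term vanishes identically; integrating it in $x$ (the spatial decay of $u$ kills the flux at infinity) gives
\[
\frac{d}{dt}\int_{\mathbb R^3}P^0[u](t,x)\,dx=\int_{\mathbb R^3}F\cdot\bigl(K_0u+2tu\bigr)\,dx,
\]
where $P^0[u](t,\cdot)$ denotes the conformal energy density.

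The heart of the argument is the claim that $\int_{\mathbb R^3}P^0[u](t,x)\,dx$ is comparable to $Q(u(t))$. I would establish this by completing squares in the weighted principal part $(t^2+|x|^2)\bigl((\partial_t u)^2+|\nabla u|^2\bigr)+4t(x\cdot\nabla u)\partial_t u$ of $P^0[u]$. Writing $x\cdot\nabla u=|x|\partial_r u$, splitting $|\nabla u|^2$ into radial and angular parts, and invoking the pointwise identities $\sum_{i<j}(\Omega_{ij}u)^2=|x|^2|\nabla u|^2-(|x|\partial_r u)^2$ and $\sum_k(L_ku)^2=\bigl(\sum_k(x_k/|x|)L_ku\bigr)^2+(t/|x|)^2\sum_{i<j}(\Omega_{ij}u)^2$, one finds that this principal part collapses \emph{exactly} to $(Su)^2+\sum_{i<j}(\Omega_{ij}u)^2+\sum_k(L_ku)^2$: the ``radial boost'' square $\bigl(\sum_k(x_k/|x|)L_ku\bigr)^2$ cancels, which is precisely what removes the degeneration of $P^0[u]$ along the light cone $\{t=|x|\}$. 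The zeroth-order correction terms, after one integration by parts in $x$, then upgrade $(Su)^2$ to $((S+2)u)^2$ up to a remainder bounded by $C\|u(t)\|_{L^2}^2$, and $\|u(t)\|_{L^2}^2\le CQ(u(t))$ by (\ref{conf2}); hence $c\,Q(u(t))\le\int_{\mathbb R^3}P^0[u](t,x)\,dx\le C\,Q(u(t))$. Carrying out this reduction carefully — in particular verifying that $P^0[u]$ carries no indefinite lower-order remainder, which is where the dimension $n=3$ and the precise shift $S+2$ enter — is the step I expect to be the main obstacle; it is essentially the computation underlying (\ref{conf2}), see \cite[pp.\,311--322]{KlainermanNull86} or \cite[Theorem 6.11]{AlinhacText}.

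It then remains to run Gr\"onwall's argument on the divergence identity. By the pointwise bound $|K_0u+2tu|\le(\tau+|x|)\bigl(|(S+2)u|+\sum_k|L_ku|\bigr)$ and Cauchy--Schwarz,
\[
\Bigl|\int_{\mathbb R^3}F\cdot(K_0u+2tu)\,dx\Bigr|\le C\bigl\|(\tau+|x|)F(\tau)\bigr\|_{L^2}\sqrt{Q(u(\tau))}\le C\bigl\|(\tau+|x|)F(\tau)\bigr\|_{L^2}\Bigl(\int_{\mathbb R^3}P^0[u](\tau,\cdot)\Bigr)^{1/2},
\]
so dividing the identity by $2\bigl(\int_{\mathbb R^3}P^0[u](\tau,\cdot)\bigr)^{1/2}$ yields $\frac{d}{d\tau}\bigl(\int_{\mathbb R^3}P^0[u](\tau,\cdot)\bigr)^{1/2}\le C\|(\tau+|x|)F(\tau)\|_{L^2}$, which I would integrate over $(0,t)$. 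Finally the data term is evaluated directly: at $t=0$ one has $(S+2)u=\Lambda f+2f$, $\Omega^bu=\Omega^bf$, and $L_ku=x_kg$, and since $|\Omega^bf|\le C|x|\,|\nabla f|$ and $|\Lambda f|\le|x|\,|\nabla f|$, this gives $\int_{\mathbb R^3}P^0[u](0,x)\,dx\le C\bigl(\|f\|_{L^2}+\||x|\nabla f\|_{L^2}+\||x|g\|_{L^2}\bigr)^2$. Combining this with $\sqrt{Q(u(t))}\le C\bigl(\int_{\mathbb R^3}P^0[u](t,\cdot)\bigr)^{1/2}$ proves (\ref{confest}). (Alternatively, the estimate is also the image, under Christodoulou's conformal compactification, of the ordinary energy inequality applied to the conformally transformed solution; cf.\ \cite{Christodoulou}.)
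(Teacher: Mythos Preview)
Your proposal is correct and follows the standard Morawetz conformal-multiplier argument that underlies \cite[Theorem~6.11]{AlinhacText}, which is precisely the reference the paper invokes in lieu of a proof. The paper gives no argument of its own here beyond citing that theorem and the equivalence of $Q$ and $\tilde Q$, so your write-up simply supplies the details the authors chose to omit.
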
 
%%%%%%%%%%%%%%%%%%%%%%%%%%%%%%
The following estimate is essentially due to Li and Yu \cite{LiYu}, 
and we employ it to bound local solutions in the $X_2$ norm. 
\begin{lemma}
The solution $u$ to the inhomogeneous wave equation 
$\partial_t^2 u-\Delta u=F$ in 
${\mathbb R}^3\times (0,\infty)$ 
with data $(f,g)$ at $t=0$ satisfies
\begin{align}\label{ly}
\|&u(t)\|_{L^2({\mathbb R}^3)}
\leq
\|f\|_{L^2({\mathbb R}^3)}
+
\||D|^{-1}g\|_{L^2({\mathbb R}^3)}
+
C\int_0^t\|\chi_1 F(\tau)\|_{L^{6/5}({\mathbb R}^3)}d\tau\\
&
\hspace{2.3cm}
+
C\int_0^t
\langle\tau\rangle^{-1/2}
\|\chi_2 F(\tau)\|_{L_r^1 L_\omega^{4/3}({\mathbb R}^3)}d\tau.\nonumber
\end{align}
The functions $\chi_1$ and $\chi_2$ denote 
the characteristic functions of 
$\{x\in{\mathbb R}^3:|x|<(1+\tau)/2\}$ and 
$\{x\in{\mathbb R}^3:|x|>(1+\tau)/2\}$, respectively. 
\end{lemma}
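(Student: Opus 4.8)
The plan is to start from the Kirchhoff (Duhamel) representation
$u(t)=\cos(t|D|)f+|D|^{-1}\sin(t|D|)g+\int_0^t|D|^{-1}\sin((t-\tau)|D|)F(\tau)\,d\tau$.
The two linear terms are handled immediately: since $\cos(t|D|)$ and $|D|^{-1}\sin(t|D|)$ are bounded on $L^2$ (they are Fourier multipliers of modulus $\leq1$ and $\leq|\xi|^{-1}|\sin t|\xi||\leq$ well, one writes $\|\cos(t|D|)f\|_{L^2}\leq\|f\|_{L^2}$ and $\||D|^{-1}\sin(t|D|)g\|_{L^2}\leq\||D|^{-1}g\|_{L^2}$), these contribute exactly the first two terms on the right of (\ref{ly}). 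By Minkowski's integral inequality the inhomogeneous term is bounded by $\int_0^t\|\,|D|^{-1}\sin((t-\tau)|D|)F(\tau)\|_{L^2}\,d\tau$, so the whole matter reduces to an $L^1_\tau$-in-time estimate of a single free wave evolution $\|W(t-\tau)F(\tau)\|_{L^2}$, where $W(s)=|D|^{-1}\sin(s|D|)$.

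Next I would split $F(\tau)=\chi_1F(\tau)+\chi_2F(\tau)$ according to whether $|x|<(1+\tau)/2$ or $|x|>(1+\tau)/2$, and estimate the two pieces by two different fixed-time bounds for $W(s)$. For the interior piece, I would use the standard $L^{6/5}\to L^2$ mapping property of $W(s)$ uniformly in $s$: indeed by Sobolev $\|W(s)h\|_{L^2}\lesssim\| |D|^{-1}h\|_{L^2}+\|W(s)h\|_{L^2}$— more directly, one interpolates / uses that $|D|^{-1}:L^{6/5}(\mathbb R^3)\to L^2(\mathbb R^3)$ is bounded (Hardy–Littlewood–Sobolev, $-1=3/2-3/(6/5)$... check: $\frac1{6/5}-\frac13=\frac56-\frac13=\frac12$, so $|D|^{-1}$ maps $L^{6/5}\to L^2$) combined with the $L^2$-boundedness of $\sin(s|D|)$. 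This gives $\|W(s)\chi_1F(\tau)\|_{L^2}\lesssim\|\chi_1F(\tau)\|_{L^{6/5}}$, producing the third term. For the exterior piece, where $F$ is supported away from the light cone origin, I would invoke the weighted/angular-mixed-norm dispersive bound of Li–Yu: $\|W(s)h\|_{L^2(\mathbb R^3)}\lesssim\langle s\rangle^{-1/2}\|h\|_{L^1_rL^{4/3}_\omega}$ when $h$ is supported in $\{|x|\gtrsim\langle\tau\rangle\}$ — this is where the support restriction $\chi_2$ and the extra $\langle\tau\rangle^{-1/2}$ weight are used. Summing the two contributions over $\tau\in(0,t)$ yields (\ref{ly}).

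The main obstacle is the exterior estimate: proving that $W(s)$ maps the mixed radial–angular space $L^1_rL^{4/3}_\omega$ into $L^2(\mathbb R^3)$ with the decay factor $\langle s\rangle^{-1/2}$, valid for data supported in $\{|x|>(1+\tau)/2\}$. The cleanest route is to reduce to the $1{+}1$-dimensional wave equation via spherical harmonics: decompose $h$ into spherical harmonics $h=\sum_k h_k(r)Y_k(\omega)$, use that each angular mode evolves by a radial wave operator whose fundamental solution is essentially supported on $|r-|r'||\leq s\leq r+r'$, exploit $r\sim r'\sim\langle\tau\rangle$ on the support, and track the $r^2\,dr$ Jacobian to get the $\langle s\rangle^{-1/2}$ gain; the $L^{4/3}_\omega\to L^?_\omega$ loss is absorbed by the finiteness of the relevant angular Sobolev embedding in dimension $2$ (i.e. on $S^2$). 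Alternatively one appeals directly to the fixed-time $L^1$–$L^2$-type bounds in \cite{LiYu}; since the lemma is stated as "essentially due to Li and Yu," I would cite their estimate and only indicate the support localization and the interior $L^{6/5}$ bound as the modifications. The remaining bookkeeping — Minkowski, the $|D|^{-1}:L^{6/5}\to L^2$ boundedness, and combining the pieces — is routine.
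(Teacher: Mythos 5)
Your skeleton is right: Duhamel, treat the two linear terms trivially, bound the Duhamel integral by $\int_0^t\|W(t-\tau)F(\tau)\|_{L^2}\,d\tau$ with $W(s)=|D|^{-1}\sin(s|D|)$, and split $F=\chi_1F+\chi_2F$. Your interior bound is also correct and essentially the paper's, in dual form: the paper phrases it as duality plus Sobolev ($\|W(s)v\|_{L^6}\le C\|\nabla W(s)v\|_{L^2}=C\|\sin(s|D|)v\|_{L^2}\le C\|v\|_{L^2}$, so $W(s)\colon L^{6/5}\to L^2$), which is the same content as your HLS observation that $|D|^{-1}\colon L^{6/5}\to L^2$.

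Where you diverge from the paper, and where there is a genuine gap, is the exterior piece. You treat it as the main obstacle and propose either a spherical-harmonic reduction to $1+1$ dimensions or a direct citation to Li--Yu, with the decay attributed to a ``dispersive bound''; in two places you even write the gain as $\langle s\rangle^{-1/2}$. That misidentifies the mechanism. The factor is $\langle\tau\rangle^{-1/2}$, it comes purely from the spatial localization $|x|>(1+\tau)/2$, and no dispersive decay of $W(s)$ in the elapsed time $s=t-\tau$ is used at all. The paper's argument is exactly parallel to the interior one: since $W(s)$ is self-adjoint, $\|W(s)\chi_2h\|_{L^2}$ is controlled by $\|h\|_{L_r^1L_\omega^{4/3}}\cdot\sup_{\|v\|_{L^2}=1}\|\chi_2W(s)v\|_{L_r^\infty L_\omega^4}$; on the support of $\chi_2$ one has $r\gtrsim\langle\tau\rangle$, so the trace inequality (\ref{sob23}), $\|r^{1/2}w\|_{L_r^\infty L_\omega^4}\le C\|\nabla w\|_{L^2}$, gives $\|\chi_2W(s)v\|_{L_r^\infty L_\omega^4}\le C\langle\tau\rangle^{-1/2}\|\nabla W(s)v\|_{L^2}\le C\langle\tau\rangle^{-1/2}\|v\|_{L^2}$, uniformly in $s$. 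So the interior and exterior estimates are handled by the same one-line duality argument, with $\|v\|_{L^6}\le C\|\nabla v\|_{L^2}$ replaced by its radial--angular twin (\ref{sob23}); your spherical-harmonics route is unnecessary, and the $\langle s\rangle$ versus $\langle\tau\rangle$ slip suggests you were looking for decay in the wrong variable.
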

%%%%%%%%%%%%%%%%%%%%%%%%%%%
\begin{proof}
This is a consequence of the Sobolev inequality 
$\|v\|_{L^6({\mathbb R}^3)}\leq C\|\nabla v\|_{L^2({\mathbb R}^3)}$, 
the trace inequality (\ref{sob23}) above, and the duality argument. 
\end{proof}
%%%%%%%%%%%%%%%%%%%%%%%%%%%%%
We also need the space-time $L^2$ estimates for 
the variable-coefficient operator $P$ defined as 
%%%%%%%%%%%%%%%%%%%%%%%%%%%%%
\begin{equation}\label{pdef}
P
:=
\partial_t^2-\Delta
+h^{\alpha\beta}(t,x)\partial_{\alpha\beta}^2
=
\bigl(
-m^{\alpha\beta}+h^{\alpha\beta}(t,x)
\bigr)\partial_{\alpha\beta}^2. 
\end{equation}
%%%%%%%%%%%%%%%%%%%%%%%%%%%%%%
Here, $(m^{\alpha\beta})=\mbox{{\rm diag}}\,(-1,1,1,1)$, 
and the  variable coefficients 
$h^{\alpha\beta}\in C^\infty((0,T)\times{\mathbb R}^3)$  
$(\alpha, \beta = 0,1,2,3)$ satisfy the symmetry condition 
$h^{\alpha\beta}=h^{\beta\alpha}$. 
Define the (modified) energy-momentum tensor as 
\begin{equation}
T^{\alpha\beta}
:=
m^{\alpha\mu}
(m^{\beta\nu}-h^{\beta\nu})
(\partial_\mu v)
(\partial_\nu v)
-\frac12
m^{\alpha\beta}
(m^{\mu\nu}-h^{\mu\nu})
(\partial_\mu v)
(\partial_\nu v).
\end{equation}
%%%%%%%%%%%%%%%%%%%%%%%%%%
A straightforward computation yields\,:
%%%%%%%%%%%%%%%%%%%%%%%%%%%%%%%%%
\begin{lemma}\label{energymomentum}
Let $g=g(\rho)\in C^1({\mathbb R})$. 
The equality 
\begin{align}
\partial&_\beta
(e^{g(t-r)}T^{0\beta})\\
&
-
e^{g(t-r)}
\biggl(
(\partial_t v)
(Pv)
+
(\partial_\beta h^{\beta\nu})
(\partial_t v)
(\partial_\nu v)
-
\frac12
(\partial_t h^{\mu\nu})
(\partial_\mu v)
(\partial_\nu v)
\biggr)\nonumber\\
&
\hspace{0.4cm}
-
e^{g(t-r)}
g'(t-r)(-\omega_\beta)T^{0\beta}=0\nonumber
\end{align}
holds. 
Here, as in Lemma $\ref{lemmanullt}$, 
$\omega_0=-1$, 
$\omega_k=x_k/|x|$, $k=1,2,3.$
\end{lemma}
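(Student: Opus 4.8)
The identity is a purely algebraic divergence computation, and I would carry it out in two steps: first establish the weight-free version, i.e.\ a formula for $\partial_\beta T^{0\beta}$, and then restore the ghost weight $e^{g(t-r)}$. Setting $\alpha=0$ in the definition of $T^{\alpha\beta}$ and using that $(m^{\alpha\beta})=\mathrm{diag}(-1,1,1,1)$ gives $m^{0\mu}\partial_\mu v=-\partial_t v$ and $m^{0\beta}\partial_\beta=-\partial_t$, one writes
$$
T^{0\beta}=-(\partial_t v)(m^{\beta\nu}-h^{\beta\nu})(\partial_\nu v)-\tfrac12 m^{0\beta}(m^{\mu\nu}-h^{\mu\nu})(\partial_\mu v)(\partial_\nu v).
$$
Applying the Leibniz rule to the first summand produces three terms: the term where $\partial_\beta$ falls on $m^{\beta\nu}-h^{\beta\nu}$ gives $(\partial_\beta h^{\beta\nu})(\partial_t v)(\partial_\nu v)$ since $m$ is constant; the term where $\partial_\beta$ falls on the last factor gives $-(\partial_t v)(m^{\beta\nu}-h^{\beta\nu})(\partial_\beta\partial_\nu v)=(\partial_t v)(Pv)$ because $(-m^{\beta\nu}+h^{\beta\nu})\partial^2_{\beta\nu}=P$; and the remaining term $-(\partial_\beta\partial_t v)(m^{\beta\nu}-h^{\beta\nu})(\partial_\nu v)$ is kept aside for cancellation.

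Next I would differentiate the second summand: using $m^{0\beta}\partial_\beta=-\partial_t$ it becomes $\tfrac12\partial_t\bigl[(m^{\mu\nu}-h^{\mu\nu})(\partial_\mu v)(\partial_\nu v)\bigr]$. Expanding, the derivative hitting the coefficient contributes $-\tfrac12(\partial_t h^{\mu\nu})(\partial_\mu v)(\partial_\nu v)$, while the derivatives hitting $(\partial_\mu v)(\partial_\nu v)$ combine — thanks to the symmetry $m^{\mu\nu}-h^{\mu\nu}=m^{\nu\mu}-h^{\nu\mu}$ — into $(m^{\mu\nu}-h^{\mu\nu})(\partial_t\partial_\mu v)(\partial_\nu v)$, which is exactly the negative of the term set aside above. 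These two cancel, leaving
$$
\partial_\beta T^{0\beta}=(\partial_t v)(Pv)+(\partial_\beta h^{\beta\nu})(\partial_t v)(\partial_\nu v)-\tfrac12(\partial_t h^{\mu\nu})(\partial_\mu v)(\partial_\nu v).
$$
To finish, I would restore the weight via $\partial_\beta(e^{g(t-r)}T^{0\beta})=e^{g(t-r)}\partial_\beta T^{0\beta}+g'(t-r)\bigl(\partial_\beta(t-r)\bigr)e^{g(t-r)}T^{0\beta}$ and observe that $\partial_0(t-r)=1=-\omega_0$ and $\partial_k(t-r)=-x_k/|x|=-\omega_k$, i.e.\ $\partial_\beta(t-r)=-\omega_\beta$. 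Substituting the displayed formula for $\partial_\beta T^{0\beta}$ and transposing the two weighted terms to the left-hand side yields the claimed equality.

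There is no genuine obstacle; the argument is mechanical. The only delicate point is sign-bookkeeping forced by the convention $(m^{\alpha\beta})=\mathrm{diag}(-1,1,1,1)$ (so that $-m^{\alpha\beta}\partial^2_{\alpha\beta}=\Box$) and the ``modified'' placement of $m$ versus $m-h$ in $T^{\alpha\beta}$: one must check that the pair of indices contracted against $(\partial v)(\partial v)$ really is symmetric, since this is precisely what makes the second-order terms in $v$ cancel against each other apart from the $(\partial_t v)(Pv)$ contribution. I would verify this cancellation by keeping all indices explicit before performing the contractions.
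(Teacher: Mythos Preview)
Your computation is correct, and it is exactly the ``straightforward computation'' the paper invokes without spelling out: compute $\partial_\beta T^{0\beta}$ by Leibniz, use the symmetry $h^{\mu\nu}=h^{\nu\mu}$ to cancel the cross terms and produce $(\partial_t v)(Pv)$, and then apply the product rule to $e^{g(t-r)}$ with $\partial_\beta(t-r)=-\omega_\beta$. There is nothing to add.
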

%%%%%%%%%%%%%%%%%%%%%%%%%%%%%%%%%%%%%%%
In the next section, we employ Lemma \ref{energymomentum} 
together with 
\begin{equation}\label{2019june26t00}
T^{00}
=
\frac12
|\partial_t v|^2
+
\frac12
|\nabla v|^2
+
h^{0\nu}(\partial_t v)(\partial_\nu v)
-\frac12
h^{\mu\nu}
(\partial_\mu v)(\partial_\nu v)
\end{equation}
%%%%%%%%%%%%%%%%%%%%%%%%%%%%%%
and 
%%%%%%%%%%%%%%%%%%%%%%%%%%%%5
\begin{equation}\label{2019june26t0beta}
(-\omega_\beta)T^{0\beta}
=
\frac12
\sum_{j=1}^3
(T_j v)^2
-
\omega_\beta
h^{\beta\nu}
(\partial_t v)
(\partial_\nu v)
-
\frac12
h^{\mu\nu}
(\partial_\mu v)
(\partial_\nu v)
\end{equation}
%%%%%%%%%%%%%%%%%%%%%%%%%%
to obtain the Alinhac type $L^2$ weighted space-time estimate for 
the special derivatives 
$T_k u_i$ of local solutions $u=(u_1,\dots,u_N)$. 
Though employing the ghost weight energy method of Alinhac, 
we mention that it is possible to get 
essentially the same $L^2$ weighted space-time estimate 
by following the idea of Lindblad and Rodnianski \cite{LR2005}, 
Lindblad, Nakamura, and Sogge \cite[Lemma A.1]{LNS2013}. 

We also need the Keel-Smith-Sogge type $L^2$ weighted space-time estimate for 
the standard derivatives. 
In addition to the symmetry condition, 
we further suppose the size condition $\sum |h^{\alpha\beta}(t,x)|\leq 1/2$. 
Owing to the method in \cite[Appendix]{Ster}, 
we have the following\,:
\begin{lemma}[Theorem 2.1 of \cite{HWY2012Adv}]\label{spacetimeL2}
For $0<\mu<1/2$, there exists a positive constant $C$ such that 
the inequality 
\begin{align}
(1&+T)^{-2\mu}
\left(
\|
r^{-(3/2)+\mu}u
\|^2_{L^2((0,T)\times{\mathbb R}^3)}
+
\|
r^{-(1/2)+\mu}
\partial u
\|^2_{L^2((0,T)\times{\mathbb R}^3)}
\right)\label{rod}\\
&
\leq
C\|\partial u(0,\cdot)\|^2_{L^2({\mathbb R}^3)}\nonumber\\
&
\hspace{0.4cm}
+C
\int_0^T\!\!\!\int_{{\mathbb R}^3}
\left(
|\partial u||Pu|
+
\frac{|u||Pu|}{r^{1-2\mu}\langle r\rangle^{2\mu}}
+
|\partial h||\partial u|^2 \right. \nonumber \\
&
\hspace{2.9cm}
\left.
+ 
\frac{|\partial h||u\partial u|}{r^{1-2\mu}\langle r\rangle^{2\mu}}
+ 
\frac{|h||\partial u|^2}{r^{1-2\mu}\langle r\rangle^{2\mu}}
+ 
\frac{|h||u \partial u|}{r^{2 - 2\mu}\langle r\rangle^{2\mu}} 
\right)dxdt\nonumber
\end{align}
holds for smooth and compactly supported 
$($for any fixed time$)$ functions $u(t,x)$.
\end{lemma}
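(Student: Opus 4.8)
The plan is to prove (\ref{rod}) by the positive‑commutator (Morawetz / Keel--Smith--Sogge) multiplier method, in the variable‑coefficient form of \cite[Appendix]{Ster}. Fix $\mu\in(0,1/2)$ and $T\ge1$, and let $f=f_T(r)$ be the radial weight equal to $r^{2\mu}$ for $0<r\le T$ and smoothly continued, with $f'\ge0$ and a profile chosen so that the zeroth‑order term below keeps the correct sign, to the constant value $\simeq T^{2\mu}$ for $r\ge T$; thus $0\le f\le CT^{2\mu}$, while $f'(r)$ and $r^{-1}f(r)$ are $\ge c\,r^{-1+2\mu}$ for $r\le T$ and $\le CT^{2\mu}r^{-1+2\mu}\langle r\rangle^{-2\mu}$ for all $r>0$. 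Writing $(\partial_t^2-\Delta)u=Pu-h^{\alpha\beta}\partial^2_{\alpha\beta}u$, I would pair it with the radial multiplier
\[
Xu:=f(r)\,\partial_r u+\frac{rf'(r)+2f(r)}{2r}\,u
\]
and integrate the resulting pointwise identity over $(0,T)\times\mathbb R^3$; all spatial boundary terms vanish because $u$ is compactly supported at each time. Integration by parts turns the flat contribution $(\partial_t^2-\Delta)u\cdot Xu$ into: (i) a positive definite quadratic form in $\partial u=(\partial_0u,\dots,\partial_3u)$, whose $|\partial_t u|^2$‑part arises when the $\partial_t^2u$ term is integrated by parts in $x$, with coefficients comparable to $r^{-1+2\mu}$, hence bounding $\int_0^T\!\!\int r^{-1+2\mu}|\partial u|^2\,dx\,dt$; (ii) a zeroth‑order term $\ge c\int_0^T\!\!\int r^{-3+2\mu}|u|^2\,dx\,dt$, the sign being guaranteed by the choice of $f$; and (iii) boundary contributions at $t=0$ and $t=T$ of size at most $CT^{2\mu}\bigl(\|\partial u(0)\|_{L^2}^2+\|\partial u(T)\|_{L^2}^2\bigr)$, since $f\le CT^{2\mu}$. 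Dividing through by $T^{2\mu}$ reproduces the left‑hand side of (\ref{rod}) together with its first right‑hand term.

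Next I would treat the perturbation $-h^{\alpha\beta}\partial^2_{\alpha\beta}u\cdot Xu$. After the $x$‑integrations by parts, it splits into a part that merely perturbs the coefficients of the positive quadratic form in (i) — absorbed into that form thanks to the size condition $\sum_{\alpha,\beta}|h^{\alpha\beta}|\le1/2$ — and into error terms pointwise bounded by constant multiples of $f\,|\partial h||\partial u|^2$, $r^{-1}f\,|\partial h||u\,\partial u|$, $r^{-1}f\,|h||\partial u|^2$, and $r^{-2}f\,|h||u\,\partial u|$; invoking the bounds on $f,f',r^{-1}f$ recorded above and dividing by $T^{2\mu}$, each of these is dominated by the corresponding term on the right of (\ref{rod}). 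Likewise the terms containing $Pu$ are $\int f\,|\partial u||Pu|$ and $\int\frac{rf'+2f}{2r}\,|u||Pu|$, which after division by $T^{2\mu}$ become $\int|\partial u||Pu|$ and $\int\frac{|u||Pu|}{r^{1-2\mu}\langle r\rangle^{2\mu}}$. There remains the term $(1+T)^{-2\mu}\|\partial u(T)\|_{L^2}^2$; to absorb it I would use the standard energy identity for $P$ — pairing $Pu$ with $\partial_t u$ and integrating over the slab, equivalently integrating the identity of Lemma \ref{energymomentum} with $g$ constant and using (\ref{2019june26t00}), with the size condition on $h$ keeping $T^{00}$ comparable to $|\partial u|^2$ — which gives $\|\partial u(T)\|_{L^2}^2\le C\|\partial u(0)\|_{L^2}^2+C\int_0^T\!\!\int\bigl(|\partial h||\partial u|^2+|\partial u||Pu|\bigr)dx\,dt$, all of whose terms already appear on the right of (\ref{rod}). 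Since $(1+T)^{-2\mu}\le1$, combining the two estimates yields (\ref{rod}).

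The step I expect to be the main obstacle is the choice of the weight $f=f_T$: it must at once be such that the commutator quadratic form it generates is coercive enough to dominate the full weighted gradient and, through the associated zeroth‑order term, the weighted $L^2$ norm of $u$, uniformly as $r\to0$ where the weight $r^{-1+2\mu}$ is singular (with the zeroth‑order coefficient staying nonnegative), and satisfy $\|f_T\|_{L^\infty}\le CT^{2\mu}$ so that the price exacted by the boundary term is exactly the factor $(1+T)^{2\mu}$ and no worse. An equivalent route is to run the classical loss‑free Keel--Smith--Sogge estimate on each dyadic ball $\{|x|\le2^k\}$ and sum over $2^k\le T$ against the weights $2^{-2k\mu}$; in either formulation the endpoint $\mu=1/2$ is genuinely critical, which is why the restriction $0<\mu<1/2$ is imposed, and the smallness assumption $\sum_{\alpha,\beta}|h^{\alpha\beta}|\le1/2$ is precisely what keeps both the Morawetz bulk form and the energy density coercive after the variable‑coefficient perturbation.
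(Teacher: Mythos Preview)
The paper does not prove this lemma; it merely cites it as Theorem~2.1 of \cite{HWY2012Adv} and points to the method of \cite[Appendix]{Ster}. Your sketch follows precisely that referenced approach --- the positive-commutator computation with the radial multiplier $f(r)\partial_r u + \tfrac{rf'+2f}{2r}u$, the choice $f\sim r^{2\mu}$ truncated at level $T^{2\mu}$, and the absorption of the $t=T$ boundary term via the standard energy identity --- and is correct in outline. One small imprecision: the bulk $|\partial_t u|^2$ contribution arises after first passing a $\partial_t$ onto the multiplier (producing the $t=0,T$ boundary terms) and \emph{then} integrating by parts in $x$; your phrasing suggests only the spatial step, but the computation you describe is the right one.
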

%%%%%%%%%%%%%%%%%%%%%%%%%%%%%%%%%%%%%%%%%
\section{Bound for $N_4(u(t))$}\label{sect3}
The proof of local (in time) existence due to H\"ormander 
for scalar wave equations (see \cite[Theorem 6.4.11]{Hor}) is obviously valid for 
the systems (\ref{eq1}) under consideration. 
In what follows, 
we always assume that for a small constant ${\hat \varepsilon}$, 
the initial data $(f,g)$ (see (\ref{data1})) satisfies 
\begin{equation}\label{dontforgetsmall}
\|f\|_{H^3}+\|g\|_{H^2}<{\hat\varepsilon}
\end{equation} 
so that for all $i=1,\dots,N$ and $\alpha,\beta=0,1,2,3$ 
the quantities 
$|F_i^{j,\alpha\beta 0}g_j
+
F_i^{j,\alpha\beta k}\partial_k f_j|$ and 
$|G_i^{\alpha\beta}(f,g,\partial_x f)|$ 
(see (\ref{nlt1})--(\ref{nlt2})) 
are also small by the Sobolev embedding 
and we can therefore rely upon 
the local existence theorem mentioned above. 

We may focus on a priori estimates for the local solutions. 
Let us first consider the case where initial data are smooth and compactly supported 
so that there exists a constant $R>0$ such that 
${\rm supp}\,\{f,g\}\subset \{x\in{\mathbb R}^3:|x|<R\}$. 
Moreover, we know that the local solution is smooth and satisfies 
\begin{equation}\label{fps}
u_i(t,x)=\partial_t u_i(t,x)=0,\quad 
|x|>t+R,\,\,i=1,\dots,N.
\end{equation}
John proved (\ref{fps}) for $C^2$ solutions for scalar wave equations 
(see \cite{John1981}, \cite{John1990}), 
and his proof is obviously valid for the systems (\ref{eq1}) under consideration. 
We temporarily assume the regularity and support conditions on the data 
because the proof of Theorem \ref{maintheorem} becomes easier. 
Note, however, that 
all the constants $C$ appearing below 
%in the course of a priori estimates 
will never depend upon 
this constant $R$, and these conditions can be finally removed 
by the standard argument. 

Recall the definition of $D(f,g)$ (see (\ref{small7})). 
To prove the global existence, 
we must assume that the initial data $(f,g)$ is smaller than 
we have done in (\ref{dontforgetsmall}). 
That is, 
using some appropriate constants which will appear later 
in our discussion, we assume 
\begin{align}\label{small31}
D&(f,g)\leq\varepsilon_0\\
&
\mbox{where}\,
0<\varepsilon_0
<%changed from := to <, 2019/8/11
\min
\biggl\{
1,
{\hat\varepsilon},\,
\frac{\varepsilon_1^*}{6C^*C_0},\,
\frac{1}{2C_0(3C_{12}+4C_0C_{14})},\,
\frac{C_{11}}{C_{15}},\,
\frac{C_{21}}{2C_{22}},\,
\frac{C_{31}}{2C_{32}}
\biggr\}.\nonumber
\end{align}
Using the equality 
$\partial_t L_j v=\partial_j v+x_j\partial_t^2 v$ at $t=0$, 
the equation (\ref{eq1}), and the Sobolev type inequality (\ref{sob25}), 
we see that there exists a numerical constant 
$C_d>0$ such that 
the local solution initially satisfies 
%%%%%%%%%%%%%%%%%%%%%%%%%%%%%%%%%%%%
\begin{equation}\label{small32}
N_4(u(0)),\,M_3(u(0)),\,X_2(u(0))
\leq
C_d D(f,g),
\end{equation}
%%%%%%%%%%%%%%%%%%%%%%%%%%%%%%%%%%
due to the size condition (\ref{small31}). 
We remark that 
the equality 
%$r\partial_k=\Lambda+\omega_k\Omega_{jk}$ 
$$
r\partial_k
=
\omega_k\Lambda
+
\sum_{j \ne k}\omega_j\Omega_{jk}
%\quad
%(\Omega_{kj}:=x_k\partial_j-x_j\partial_k)
$$ 
plays a role in showing (\ref{small32}).
(Here, we have used the notation $\Omega_{ij}:=x_i\partial_j-x_j\partial_i$ 
not only for $i<j$ but also for $i>j$.)

%%%%%%%%%%%%%%%%%%%%%%%%%%%%%%%%%%%%%%%%
Before starting a priori estimates for the local solutions, 
we must mention some point-wise estimates as in \cite{H2016}, 
\cite{HZ2019}. 
These inequalities compensate for the absence of 
$\partial_t^i w(t,x)$ $(i = 2,\,3,\,4)$ 
in the definition of the norms (\ref{e1}), (\ref{n4}), 
(\ref{mj+1}), (\ref{xj}), 
(\ref{gvt404})--(\ref{lv311322}). 
For the local solutions $u$, we use the notation 
%%%%%%%%%%%%%%%%%%%%%%%%%%%%%%%%%
\begin{align}\label{langlerangle}
&\langle\!\langle u(t)\rangle\!\rangle
:=
\sum_{i=1}^N
\biggl\{
\sum_{|a|+|b|\leq 1}
\langle t\rangle 
\|
\partial\partial_x^a\Omega^b u_i(t)
\|_{L^\infty({\mathbb R}^3)}\\
&
\hspace{2.7cm}
+
\sum_{|c|=1}
\|\partial L^c u_i(t)\|_{L^\infty({\mathbb R}^3)}
+
\|\partial Su_i(t)\|_{L^\infty({\mathbb R}^3)}\nonumber\\
&
\hspace{2.7cm}
+
\sum_{|b|\leq 1}
\langle t\rangle^{1-\delta}
\|\Omega^b u_i(t)\|_{L^\infty({\mathbb R}^3)}
+
\sum_{|b|=2}
\|\Omega^b u_i(t)\|_{L^\infty({\mathbb R}^3)}
\nonumber\\
&
\hspace{2.7cm}
+
\sum_{|b|\leq 1}\sum_{|c|+d=1}
\langle t\rangle^{-\delta}
\|\Omega^b L^c S^d u_i(t)\|_{L^\infty({\mathbb R}^3)}\biggr\}.\nonumber
\end{align}
%%%%%%%%%%%%%%%%%%%%%%%%%%%%%%%%%
Recall the definition of the notation $Z_k$. 

%NOTE:${\bar Z}\to Z_k$, ${\bar \Gamma}\to Z_l$
%%%%%%%%%%%%%%%%%%%%%%%%%%%%%%%%%%%%%%%%%%%%%%
\begin{lemma}\label{pointwise}
There exists a small constant $\varepsilon_1^*>0$ with the following 
property\,$:$ 
whenever smooth solutions to $(\ref{eq1})$ satisfy 
%%%%%%%%%%%%%%%%%%%%%%%%%%%%%%%%%%%%%%%%%%%%
\begin{equation}\label{lem311}
\langle\!\langle u(t)\rangle\!\rangle
\leq
\varepsilon_1^*,
\end{equation}
%%%%%%%%%%%%%%%%%%%%%%%%%%%%%%%%%%%%%%
the point-wise inequalities hold 
for $i=1,2,\dots,N$, 
$j=1,2,3$, 
$k=1,2,\dots,10$, and 
$l=1,2,\dots,6:$
%%%%%%%%%%%%%%%%%%%%%%%%%
\begin{equation}\label{lem312}
|\partial_t^2 u_i(t,x)|
\leq
C\sum_{|a|\leq 1}|\partial\partial_x^a u(t,x)|
+
C|u(t,x)|^3,
\end{equation}
%%%%%%%%%%%%%%%%%%%
\begin{equation}\label{lem3125}
|\partial_t^3 u_i(t,x)|
\leq
C\sum_{|a|\leq 2}|\partial\partial_x^a u(t,x)|
+
C|u(t,x)|^3,
\end{equation}
%%%%%%%%%%%%%%%%%%%%
%%%%%%%%%%%%%%%%%%%%%
\begin{align}\label{lem313}
|\partial_t^2Z_k u_i(t,x)|
\leq&
C\sum_{|a|\leq 1}
(|\partial\partial_x^aZ_k u(t,x)|
+
|\partial\partial_x^a u(t,x)|)\\
&
+
C|u(t,x)|^2|Z_k u(t,x)|
+
C|u(t,x)|^3,\nonumber
\end{align}
%%%%%%%%%%%%%%%%%%%%%%%%%%%%%
%%%%%%%%%%%%%%%%%%%%%%%%%%%%%
\begin{align}\label{lem314}
|&\partial_t^2 Z_l Z_k u_i(t,x)|\\
&
\leq
C\sum_{|a|\leq 1}
(
|\partial\partial_x^a Z_l Z_k u(t,x)|
+
|\partial\partial_x^a Z_l u(t,x)|
+
|\partial\partial_x^a Z_k u(t,x)|
+
|\partial\partial_x^a u(t,x)|)\nonumber\\
&
+
C|u(t,x)||Z_l u(t,x)||Z_k u(t,x)|
+
C|u(t,x)|^2|Z_k Z_l u(t,x)|\nonumber\\
&
+
C|u(t,x)|^2|Z_l u(t,x)|
+
C|u(t,x)|^2|Z_k u(t,x)|
+
C|u(t,x)|^3,\nonumber
\end{align}
%%%%%%%%%%%%%%%%%%%%%%%%%%%
%%%%%%%%%%%%%%%%%%%%%%%%%%
\begin{align}\label{lem3145}
|\partial_t^3 Z_k u_i(t,x)|
\leq&
C\sum_{|a|\leq 2}|\partial\partial_x^a Z_k u(t,x)|
+
C\sum_{|a|\leq 2}|\partial\partial_x^a u(t,x)|\\
&
+
C|u(t,x)|^2|Z_k u(t,x)|
+
C|u(t,x)|^3,\nonumber
\end{align}
%%%%%%%%%%%%%%%%%%%%%%%%%%%%%
\begin{equation}\label{lem315}
|T_j\partial_t^2 u_i(t,x)|
\leq
C|T_j\partial\partial_x u(t,x)|
+
C|T_j\partial u(t,x)|
+
C|T_j u(t,x)|,
\end{equation}
and
%%%%%%%%%%%%%%%%%%%%%%%%%%%%%%%%%%
%%%%%%%%%%%%%%%%%%%%%%%%%%%%%%%%%%
\begin{align}\label{lem316}
|&T_j\partial_t^2 Z_k u_i(t,x)|\\
&
\leq
C|T_j\partial\partial_x Z_k u(t,x)|\nonumber\\
&
+
C(|T_j\partial u(t,x)|+|T_j\partial Z_k u(t,x)|)
+
C(|T_j u(t,x)|+|T_j Z_k u(t,x)|)\nonumber\\
&
+
C|T_j\partial u(t,x)||\partial\partial_x Z_k u(t,x)|
+
C|T_j u(t,x)||\partial\partial_x Z_k u(t,x)|\nonumber\\
&
+
C|Z_k u(t,x)|
(|T_ju(t,x)|+|T_j\partial u(t,x)|
+|T_j\partial\partial_x u(t,x)|).\nonumber
\end{align}
\end{lemma}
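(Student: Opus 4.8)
The plan is to derive all six point-wise inequalities from the equation $(\ref{eq1})$ by solving for $\partial_t^2 u_i$, substituting the resulting expression into itself (and into its $Z_k$- or $\partial_t$-derivatives), and repeatedly invoking the smallness hypothesis $(\ref{lem311})$ to absorb the bad coefficient factors. The starting point is to rewrite the equation as
\begin{equation*}
\bigl(1+G_i^{00}(u,\partial u)+F_i^{i,000}\partial_0 u_i\bigr)\partial_t^2 u_i
=
\text{(terms with at most one }\partial_t\text{ acting on }u_i)\,+\,H_i(u,\partial u),
\end{equation*}
where I have separated out the coefficient of $\partial_t^2 u_i$ coming from $F_i(\partial u,\partial^2 u)$ and $C_i$; by $(\ref{sym1})$ and $(\ref{nlt1})$--$(\ref{nlt2})$ the remaining second-order derivatives on the right involve $\partial_x\partial u_i$ or $\partial_x^2 u_i$, i.e. at most one time derivative. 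Under $(\ref{lem311})$, $\langle\!\langle u(t)\rangle\!\rangle$ controls $\|\partial u_i\|_{L^\infty}$ and $\|\Omega^b u_i\|_{L^\infty}$, and (using, e.g., the equation once more or the definition directly) $\|u_i\|_{L^\infty}$, so the coefficient $1+G_i^{00}+F_i^{i,000}\partial_0 u_i$ is bounded below by $1/2$ once $\varepsilon_1^*$ is small; dividing through gives $(\ref{lem312})$, the cubic term $|u|^3$ absorbing $H_i$ and, after moving it across, the quadratic coefficient pieces that multiply the controlled first-order quantities.

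**Next I would** obtain $(\ref{lem3125})$ and $(\ref{lem313})$ by differentiating. For $(\ref{lem3125})$ apply $\partial_t$ to the divided equation; the derivative hits either the coefficient $(1+G_i^{00}+\cdots)^{-1}$ — producing an extra factor $\partial_t u_i$ or $\partial_t\partial u_i$ that is $L^\infty$-small and multiplies terms already of the required form, or producing $|u|^2|\partial u|$-type cubic contributions bounded by $|u|^3$ plus lower-order $\partial\partial_x^a u$ terms — or hits the right-hand side, where it lands on at most one more $\partial_x$ and one spatial derivative, giving $|\partial\partial_x^a u|$ with $|a|\le 2$. For $(\ref{lem313})$ apply $Z_k$ instead: by Lemma $\ref{lemmacomm}$ the commutators $[Z_k,\partial_t^2]$ and $[Z_k,\partial_{\alpha\beta}^2]$ are linear combinations of second-order operators of the same order, and $[Z_k,\Box]$ is either $0$, or $-2\Box$ when $Z_k=S$; the key structural point is that $Z_k$ acting on a product by Leibniz either puts all derivatives on one factor (controlled) or splits them, and the split pieces carry a controlled $L^\infty$ factor. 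This is exactly where the quadratic terms $|u|^2|Z_k u|$ and $|u||Z_k u|$, rather than a mere $|u|^3$, appear, and where one needs $(\ref{lem312})$ applied to $Z_k u$ after commuting $Z_k$ past $\partial_t^2$.

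**Then** $(\ref{lem314})$ and $(\ref{lem3145})$ are the two-operator and mixed-order iterations of the same scheme: apply $Z_l$ to $(\ref{lem313})$, respectively $\partial_t$ to $(\ref{lem3145})$'s precursor, and bookkeep the Leibniz expansion of $G_i^{\alpha\beta}(u,\partial u)\partial_{\alpha\beta}^2 u_i$ and $H_i(u,\partial u)$ — the degree-$2$ and degree-$3$ homogeneity is what produces precisely the listed menu of cubic-type terms $|u||Z_lu||Z_ku|$, $|u|^2|Z_kZ_lu|$, $|u|^2|Z_lu|$, etc. Finally $(\ref{lem315})$ and $(\ref{lem316})$ are obtained by applying $T_j=\partial_j+\omega_j\partial_t$ to $(\ref{lem312})$ and $(\ref{lem313})$ respectively; here one uses Lemma $\ref{lemmanullt}$, specifically the null-structure bounds $(\ref{null23})$--$(\ref{null26})$, so that every quadratic contribution from $F_i$ is rewritten with at least one factor of the form $|Tw|$ or $|T\partial w|$ — that is the reason the right-hand sides of $(\ref{lem315})$--$(\ref{lem316})$ contain only $T_j$-decorated norms and no bare $|\partial\partial_x u|^2$ quadratic term without a $T$. **The main obstacle** I expect is the careful accounting in $(\ref{lem316})$: after commuting $T_j$ past $\partial_t^2$ (which, since $[T_j,\partial_t]$ involves $\partial_r\omega_j$ and hence $r^{-1}$ weights, must be done via the exact identities rather than naive commutators) and expanding $T_j Z_k$ of the null-form products, one must verify that the non-null cubic remainders — which are genuinely present because of the cubic term $C_i$, absent in Alinhac's and in \cite{HZ2019}'s setting — all fit under the factor $|Z_k u|$ multiplying a $T_j$-decorated quantity, using $(\ref{lem311})$ to bound the spare undifferentiated $u$'s in $L^\infty$; keeping the $T$-structure through the Leibniz rule for the quadratic $F_i$-terms while not losing it to the cubic $C_i$-terms is the delicate point.
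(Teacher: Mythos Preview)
Your overall strategy --- isolate $\partial_t^2 u_i$ by moving its quasilinear coefficient to the left, invert using smallness, then differentiate and use Leibniz together with $(\ref{lem311})$ to absorb bounded factors --- is correct and is precisely what the paper means by ``straightforward computations'' (the paper gives no further details). Two points in your sketch need correction, however.

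First, $[T_j,\partial_t]=0$: since $\omega_j=x_j/|x|$ is time-independent, $T_j=\partial_j+\omega_j\partial_t$ commutes with $\partial_t$, so there is no $r^{-1}$ commutator obstacle when passing $T_j$ through $\partial_t^2$. (You may be thinking of $[T_j,\partial_k]$, which does produce $(\partial_k\omega_j)\partial_t$ with an $r^{-1}$ coefficient, but that commutator is not needed here.)

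Second, Lemma~\ref{lemmanullt} plays no role in $(\ref{lem315})$--$(\ref{lem316})$. The $T_j$-decoration on every term of the right-hand sides comes purely from the Leibniz rule: applying $T_j$ to any product forces $T_j$ onto exactly one factor, and the remaining factor is then bounded in $L^\infty$ via $(\ref{lem311})$ whenever $\langle\!\langle u(t)\rangle\!\rangle$ controls it. The product terms that survive in $(\ref{lem316})$, such as $|T_j\partial u|\,|\partial\partial_x Z_k u|$ and $|Z_k u|\,|T_j\partial\partial_x u|$, are exactly those in which the non-$T_j$ factor is \emph{not} controlled by $\langle\!\langle u(t)\rangle\!\rangle$ --- this happens when $Z_k\in\{L_1,L_2,L_3,S\}$, since $\|\partial\partial_x L^c S^d u\|_{L^\infty}$ and $\|L^c S^d u\|_{L^\infty}$ (without a negative power of $\langle t\rangle$) are absent from the definition $(\ref{langlerangle})$. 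The null structure of $F_i$ is irrelevant here; Lemma~\ref{lemmanullt} is used in the paper only later, in the energy estimates over $\{|x|>(1+t)/2\}$ in Section~\ref{sect3}.
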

%%%%%%%%%%%%%%%%%%%%%%%%%%%%%%%%%%%%%%%%%%
%%%%%%%%%%%%%%%%%%%%%%%%%%%%%%%%%%%%%%%%
The proof is based on straightforward computations. 
Note that we have not pursued the best possible. 
The above inequalities suffice for our purpose.

We may obviously focus on the energy of the highest order. 
Moreover, we may focus on the bound for 
$E_1({\bar Z}^a Su_i(t))$ $(i=1,\dots,N,\,|a|=2)$ 
because we can obtain a similar bound 
for $E_1({\bar Z}^a L^c u_i(t))$ 
$(|a|=2,\,|c|=1)$ in the same way 
and the bound for $E_1({\bar Z}^a u_i(t))$ $(|a|=3)$ 
is easier to get. 

Using Lemma \ref{lemmacomm}, 
we get $\square{\bar Z}^a Su_i
={\bar Z}^a S \square u_i + 2{\bar Z}^a \square u_i$. 
By Lemma \ref{nullpreserved}, 
we therefore obtain for $|a|=2$ 
\begin{align}\label{dec1}
\Box&{\bar Z}^a Su_i\\
&
+
F_i^{j,\alpha\beta\gamma}
(\partial_\gamma u_j)
(\partial_{\alpha\beta}^2 {\bar Z}^a Su_i)
+
G_i^{\alpha\beta}(u,\partial u)
(\partial_{\alpha\beta}^2 {\bar Z}^a Su_i)\nonumber\\
&
+
\sum\!{}^{'}
{\hat F}_i^{j,\alpha\beta\gamma}
(\partial_\gamma{\bar Z}^{a'}S^{d'}u_j)
(\partial_{\alpha\beta}^2 {\bar Z}^{a''} S^{d''}u_i)\nonumber\\
&
+
\sum\!{}^{''}
{\hat F}_i^{jk,\alpha\beta}
(\partial_\alpha{\bar Z}^{a'}S^{d'}u_j)
(\partial_\beta{\bar Z}^{a''} S^{d''}u_k)\nonumber\\
&
+
\biggl(
{\bar Z}^a S
\bigl(
G_i^{\alpha\beta}(u,\partial u)\partial_{\alpha\beta}^2 u_i
\bigr)
-
G_i^{\alpha\beta}(u,\partial u)
\partial_{\alpha\beta}^2{\bar Z}^a Su_i
\biggr)\nonumber\\
&
+
{\bar Z}^a SH_i(u,\partial u)
-2{\bar Z}^a C_i(u,\partial u,\partial^2 u)=0.\nonumber
\end{align}
Here, for given $i,\,j$ 
the new coefficients ${\hat F}_i^{j,\alpha\beta\gamma}$ 
satisfy the null condition. 
Also, for given $i,\,j$, and $k$ 
the new coefficients ${\hat F}_i^{jk,\alpha\beta}$ 
satisfy the null condition. 
By $\sum\!{}^{'}$ 
we mean the summation over 
%%%%%%%%%%%%%%%%%%%%%%%%%%%%%%%%%%%%
$|a'|+|a''|\leq 2,\,d'+d''\leq 1$, $|a''|+d''\leq 2$. 
%%%%%%%%%%%%%%%%%%%%%%%%%%%%%%%%%%
Also, by $\sum\!{}^{''}$ 
we mean the summation over 
$|a'|+|a''|\leq 2$, $d'+d''\leq 1$. 
Just for simplicity of notation, 
we have omitted the dependence of 
${\hat F}_i^{j,\alpha\beta\gamma}$ 
and ${\hat F}_i^{jk,\alpha\beta}$ 
on $a,\,a',\,a'',\,d,\,d'$, and $d''$. 
%%%%%%%%%%%%%%%%%%%%%%%%%%%%%%%%%%%%%%
Using Lemma \ref{energymomentum}, (\ref{2019june26t00}) 
and (\ref{2019june26t0beta}) 
for $v={\bar Z}^aSu_i$, 
$h^{\alpha\beta}=F_i^{j,\alpha\beta\gamma}\partial_\gamma u_j
+
G_i^{\alpha\beta}(u,\partial u)$, 
we obtain 
for every $i=1,\dots,N$ and the function $g=g(t-r)$ chosen below 
(see (\ref{howg})) 
%%%%%%%%%%%%%%%%%%%%%%%%%%%%%%%%%%%%%%
\begin{align}\label{dec2}
&
\frac12
\partial_t
\bigl\{
e^g
\bigl(
(\partial_t{\bar Z}^{a}S u_i)^2
+
|\nabla{\bar Z}^{a}S u_i|^2
\\
&
\hspace{2cm}
+
2F_i^{j,0\beta\gamma}
(\partial_\gamma u_j)
(\partial_\beta{\bar Z}^{a}S u_i)
(\partial_t{\bar Z}^{a}S u_i)\nonumber\\
&
\hspace{2cm}
+
2G_i^{0\beta}(u,\partial u)
(\partial_\beta{\bar Z}^{a}S u_i)
(\partial_t{\bar Z}^{a}S u_i)\nonumber\\
&
\hspace{2cm}
-
F_i^{j,\alpha\beta\gamma}
(\partial_\gamma u_j)
(\partial_\beta{\bar Z}^{a}S u_i)
(\partial_\alpha{\bar Z}^{a}S u_i)\nonumber\\
&
\hspace{2cm}
-
G_i^{\alpha\beta}(u,\partial u)
(\partial_\beta{\bar Z}^{a}S u_i)
(\partial_\alpha{\bar Z}^{a}S u_i)
\bigr)
\bigr\}\nonumber\\
&
+\nabla\cdot\{\cdots\}
+
e^gq
+
e^g(J_{i,1}+J_{i,2}+J_{i,3}+J_{i,4}+J_{i,5})=0,\nonumber
\end{align}
%%%%%%%%%%%%%%%%%%%%%%%%%%%%%%
%%%%%%%%%%%%%%%%%%%%%%%%%%%
Here, 
\begin{equation}\label{feb19}
q
=q_1
-\frac12g'(t-r)\sum_{j=1}^3
(T_j{\bar Z}^{a}S u_i)^2
-g'(t-r)q_2,
\end{equation}
%%%%%%%%%%%%%%%%%%%%%%%%%%%%%
\begin{align}\label{dec3}
q_1
=&
-
F_i^{j,\alpha\beta\gamma}
(\partial_{\alpha\gamma}^2 u_j)
(\partial_\beta{\bar Z}^{a}S u_i)
(\partial_t{\bar Z}^{a}S u_i)\\
&
-
\bigl(\partial_\alpha G_i^{\alpha\beta}(u,\partial u)\bigr)
(\partial_\beta{\bar Z}^{a}S u_i)
(\partial_t{\bar Z}^{a}S u_i)\nonumber\\
&
+
\frac12
F_i^{j,\alpha\beta\gamma}
(\partial_{t\gamma}^2 u_j)
(\partial_\beta{\bar Z}^{a}S u_i)
(\partial_\alpha{\bar Z}^{a}S u_i)\nonumber\\
&
+
\frac12
\bigl(\partial_t G_i^{\alpha\beta}(u,\partial u)\bigr)
(\partial_\beta{\bar Z}^{a}S u_i)
(\partial_\alpha{\bar Z}^{a}S u_i),\nonumber
\end{align}
%%%%%%%%%%%%%%%%%%%%%
%%%%%%%%%%%%%%%%%%%%%%%%
%%%%%%%%%%%%%%%%%%%%%
\begin{align}\label{dec4}
q_2
=
&
-
\omega_\alpha
F_i^{j,\alpha\beta\gamma}
(\partial_{\gamma} u_j)
(\partial_\beta{\bar Z}^{a}S u_i)
(\partial_t{\bar Z}^{a}S u_i)\\
&
-
\omega_\alpha
G_i^{\alpha\beta}(u,\partial u)
(\partial_\beta{\bar Z}^{a}S u_i)
(\partial_t{\bar Z}^{a}S u_i)\nonumber\\
&
-
\frac12
F_i^{j,\alpha\beta\gamma}
(\partial_{\gamma} u_j)
(\partial_\beta{\bar Z}^{a}S u_i)
(\partial_\alpha{\bar Z}^{a}S u_i)\nonumber\\
&
-
\frac12
G_i^{\alpha\beta}(u,\partial u)
(\partial_\beta{\bar Z}^{a}S u_i)
(\partial_\alpha{\bar Z}^{a}S u_i),\nonumber
\end{align}
%%%%%%%%%%%%%%%%%%%%%%%%%%%%%%%%%%%%%%%%%%%%%%%
where $\omega_0=-1$, $\omega_k=x_k/|x|$, $k=1,2,3$. 
Also, (see (\ref{dec1}) above for $\sum\!{}^{'}$, $\sum\!{}^{''}$)
%%%%%%%%%%%%%%%%%%%%%%%%%%%%%%%%%%%%%%%%%
\begin{align}
&
J_{i,1}
=
\sum\!{}^{'}
{\hat F}_i^{j,\alpha\beta\gamma}
(\partial_\gamma{\bar Z}^{a'}S^{d'}u_j)
(\partial_{\alpha\beta}^2 {\bar Z}^{a''} S^{d''}u_i)
(\partial_t{\bar Z}^aS u_i),\label{dec5}\\
&
J_{i,2}
=
\sum\!{}^{''}
{\hat F}_i^{jk,\alpha\beta}
(\partial_\alpha{\bar Z}^{a'}S^{d'}u_j)
(\partial_\beta{\bar Z}^{a''} S^{d''}u_k)
(\partial_t{\bar Z}^aS u_i),\label{dec6}\\
&
J_{i,3}
=
\biggl(
{\bar Z}^a S
\bigl(
G_i^{\alpha\beta}(u,\partial u)\partial_{\alpha\beta}^2 u_i
\bigr)
-
G_i^{\alpha\beta}(u,\partial u)
\partial_{\alpha\beta}^2{\bar Z}^a Su_i
\biggr)
(\partial_t{\bar Z}^aS u_i),\label{dec7}\\
&
J_{i,4}
=
\bigl(
{\bar Z}^a SH_i(u,\partial u)
\bigr)
(\partial_t{\bar Z}^aS u_i),\label{dec8}\\
&
J_{i,5}\label{july6}
=
\bigl(
-2{\bar Z}^a C_i(u,\partial u,\partial^2 u)
\bigr)
(\partial_t{\bar Z}^a Su_i).
\end{align}
%%%%%%%%%%%%%%%%%%%%%%%%%%%%%%%%%%%%%%%
As in \cite{HZ2019}, we use the following quantities 
$G(v(t))$ and $L(v(t))$ 
which are related to the ghost energy 
and the localized energy, respectively\,:
\begin{align}\label{gvt404}
G(v(t))
:=
\biggl\{
\sum_{j=1}^3
\biggl(&
\sum_{{|a|+|c|+d\leq 3}\atop{|c|+d\leq 1}}
\|
\langle t-r\rangle^{-(1/2)-\eta}
T_j{\bar Z}^a L^c S^d v(t)
\|_{L^2({\mathbb R}^3)}^2\\
&
+
\sum_{{|a|+|c|+d\leq 2}\atop{|c|+d\leq 1}}
\|
\langle t-r\rangle^{-(1/2)-\eta}
T_j\partial_t{\bar Z}^a L^c S^d v(t)
\|_{L^2({\mathbb R}^3)}^2
\biggr)
\biggr\}^{1/2},\nonumber
\end{align}
%%%%%%%%%%%%%%%
%%%%%%%%%%%%%%%%%%
\begin{align}\label{lv311322}
L(v(t)):=
\biggl\{
\sum_{{|a|+|c|+d\leq 3}\atop{|c|+d\leq 1}}
\biggl(&
\|
r^{-5/4}
{\bar Z}^a L^c S^d v(t)
\|_{L^2({\mathbb R}^3)}^2\\
&
+
\|
r^{-1/4}
\partial{\bar Z}^a L^c S^d v(t)
\|_{L^2({\mathbb R}^3)}^2
\biggr)
\biggr\}^{1/2}.\nonumber
\end{align}
%%%%%%%%%%%%%%%%%%%%%%%%%%%%%%%%%%%5
We remark that the norm 
$\|
\langle t-r\rangle^{-(1/2)-\eta}
T_j\partial_t{\bar Z}^a L^c S^d v(t)
\|_{L^2({\mathbb R}^3)}$ 
$(|a|+|c|+d\leq 2,\,|c|+d\leq 1)$, 
which requires a separate treatment, 
naturally comes up later. 
See, e.g., (\ref{troublesome}) below. 
For $w(t,x)=(w_1(t,x),\dots,w_N(t,x))$, we set 
%%%%%%%%%%%%%%%%%%%%%%%
\begin{equation}
G(w(t))
:=
\biggl(
\sum_{i=1}^N
G(w_i(t))^2
\biggr)^{1/2},\quad
%%%%%%%%%%%%%%%%%%%
L(w(t))
:=
\biggl(
\sum_{i=1}^N
L(w_i(t))^2
\biggr)^{1/2}.
\end{equation}
%%%%%%%%%%%%%%%%%%%%%%%%
%%%%%%%%%%%%%%%%%%%%%%%%%%%%%%%%%%
Recall the definition of $D(f,g)$ (see (\ref{small7})). 
By $\eta$, we mean a sufficiently small positive constant 
satisfying $0<\eta<1/3$. 
The main purpose of this section is to prove\,:
%%%%%%%%%%%%%%%%%%
\begin{proposition}\label{ghostenergyestimate}
Suppose that initial data $(\ref{data1})$ is smooth and compactly supported, 
and suppose that the local solution $u$ satisfies $(\ref{lem311})$ in 
some interval $(0,T)$. 
Then the following inequality holds 
for all $t\in (0,T):$
%%%%%%%%%%%%%%%%%%%%%%%%%%%%%%%%%%5
\begin{align}\label{genergyest}
N&_4(u(t))^2
+
\int_0^t
G(u(\tau))^2
d\tau\\
&
\leq
CN_4(u(0))^2+CD(f,g)^6\nonumber\\
&
\hspace{0.2cm}
+
C\int_0^t
\langle\tau\rangle^{-1}
\bigl(
M_3(u(\tau))+N_4(u(\tau))
\bigr)
L(u(\tau))^2 d\tau\nonumber\\
&
\hspace{0.2cm}
+
C\int_0^t
\langle \tau\rangle^{-3/2}
\bigl(
M_3(u(\tau))
+
N_4(u(\tau))
\bigr)
N_4(u(\tau))^2 d\tau\nonumber\\
&
\hspace{0.2cm}
+
C\int_0^t
\langle \tau\rangle^{-1+\eta}
\bigl(
M_3(u(\tau))
+
N_4(u(\tau))
\bigr)
N_4(u(\tau))G(u(\tau)) d\tau\nonumber\\
&
\hspace{0.2cm}
+
C\int_0^t
\langle \tau\rangle^{-1}
\bigl(
M_3(u(\tau))^2
+
N_4(u(\tau))^2
\bigr)
L(u(\tau))^2d\tau\nonumber\\
&
\hspace{0.2cm}
+
C\int_0^t
\langle \tau\rangle^{-2}
\bigl(
M_3(u(\tau))^3
+
N_4(u(\tau))^3
\bigr)
N_4(u(\tau))d\tau\nonumber\\
&
\hspace{0.2cm}
+
C\int_0^t
\langle \tau\rangle^{-2}
\bigl(
M_3(u(\tau))^2
+
N_4(u(\tau))^2
\bigr)
N_4(u(\tau))^2 d\tau.\nonumber
\end{align}
\end{proposition}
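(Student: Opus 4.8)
The plan is to establish (\ref{genergyest}) by integrating the pointwise divergence identity (\ref{dec2}) over ${\mathbb R}^3\times(0,t)$, in the spirit of Alinhac's ghost-weight method, and then carefully estimating every error term on the resulting right-hand side. First I would fix the ghost weight $g=g(t-r)$ to be a bounded, increasing $C^1$ function with $g'(\rho)\sim\langle\rho\rangle^{-1-2\eta}$, so that $g'>0$ and $\int_{\mathbb R}g'(\rho)\,d\rho<\infty$; apply (\ref{dec2}) with $v=\bar{Z}^aSu_i$, $|a|=2$, and -- in exactly the same way -- with $v=\bar{Z}^aL^cu_i$ ($|a|=2$, $|c|=1$) and $v=\bar{Z}^au_i$ ($|a|=3$), the latter being simpler. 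Integrating, the term $\nabla\cdot\{\cdots\}$ vanishes by the compact support (\ref{fps}); the $\partial_t$-term contributes $E_1(\bar{Z}^aSu_i(t))$ at times $t$ and $0$ together with quadratic corrections that, thanks to the $L^\infty$-smallness (\ref{lem311}) of $h^{\alpha\beta}=F_i^{j,\alpha\beta\gamma}\partial_\gamma u_j+G_i^{\alpha\beta}(u,\partial u)$, are absorbed into a fraction of $E_1(\cdots(t))$ on the left and contribute $CN_4(u(0))^2$, plus at $t=0$ a term $O(D(f,g)^6)$ via (\ref{small32}), on the right; and the term $-\tfrac12g'(t-r)\sum_j(T_j\bar{Z}^aSu_i)^2$ in $q$ (see (\ref{feb19})) produces, after summing over $i$ and over all admissible multi-indices, exactly $\int_0^tG(u(\tau))^2\,d\tau$ on the left (this is the source of the weight $\langle t-r\rangle^{-(1/2)-\eta}$ in (\ref{gvt404})). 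Thus the left side of (\ref{genergyest}) is controlled, modulo the space-time integrals of $e^gq_1$, $e^gg'(t-r)q_2$, and $e^g(J_{i,1}+\cdots+J_{i,5})$, which must be bounded by the seven terms on the right.

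For the quadratic error terms I would exploit the null structure of the commuted coefficients. The quantities $q_1$, $q_2$ (see (\ref{dec3})--(\ref{dec4})) are quadratic in $\partial\bar{Z}^aSu_i$ with coefficient $\partial^2u$, $\partial u$, or a quadratic polynomial in $(u,\partial u)$; after removing $\partial_t^2 u$ via (\ref{lem312})--(\ref{lem3125}) the coefficient decays like $\langle\tau\rangle^{-1}$ in $L^\infty$ -- here $M_3$ enters, since it controls $\|u(\tau)\|_{L^2}$, its $\partial_x^a\Omega^b$-derivatives, and (together with $N_4$) the point-wise bounds packaged in $\langle\!\langle u(\tau)\rangle\!\rangle$ -- so a Cauchy--Schwarz against $\partial_t\bar{Z}^aSu_i$ and a region decomposition $\{|x|<(\tau+1)/2\}$ versus its complement (on which $\langle\tau\rangle$ is comparable to the large variable and $\langle t-r\rangle$ to $r$) give contributions of the form $\int_0^t\langle\tau\rangle^{-1}(M_3+N_4)L(u(\tau))^2\,d\tau$ and $\int_0^t\langle\tau\rangle^{-3/2}(M_3+N_4)N_4(u(\tau))^2\,d\tau$; the extra factor $g'(t-r)\sim\langle t-r\rangle^{-1-2\eta}$ multiplying $q_2$ lets one top-order factor be read as a $T_j$-derivative and paired with $G(u(\tau))$, producing the $\int_0^t\langle\tau\rangle^{-1+\eta}(M_3+N_4)N_4(u(\tau))G(u(\tau))\,d\tau$ term. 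The crux is $J_{i,1},J_{i,2}$ (see (\ref{dec5})--(\ref{dec6})): by Lemma \ref{nullpreserved} the commuted coefficients $\hat{F}_i^{j,\alpha\beta\gamma}$, $\hat{F}_i^{jk,\alpha\beta}$ still satisfy the null condition, so Lemma \ref{lemmanullt} shows every product carries a special derivative $T(\cdot)$ of one of the factors $\bar{Z}^{a'}S^{d'}u_j$, $\bar{Z}^{a''}S^{d''}u_i$. When $T$ falls on the lower-order factor one converts it, by the point-wise decay (\ref{tv1})--(\ref{tv2}), into $\langle\tau\rangle^{-1}$ (or $r^{-1}$) times $\Omega/L/S$-derivatives controlled by $M_3+N_4$, puts the intermediate factor in $L^4(S^2)$ by the trace inequalities (\ref{sob23})--(\ref{sob25}) and (\ref{hy}), and closes with $\partial_t\bar{Z}^aSu_i$ or $\partial\bar{Z}^{a''}S^{d''}u_i$ in $L^2$, reaching the $L(u)^2$- and $N_4(u)^2$-type terms; when instead $T$ falls on the top-order factor one uses the weight $\langle t-r\rangle^{-(1/2)-\eta}$ to pair it with $G(u(\tau))$.

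The cubic pieces $J_{i,3},J_{i,4},J_{i,5}$ (see (\ref{dec7})--(\ref{july6})) are handled by expanding $\bar{Z}^aS$ with the Leibniz rule, eliminating the resulting $\partial_t^2$- and $\partial_t^3$-factors via (\ref{lem313})--(\ref{lem316}), and then placing two of the resulting factors in $L^\infty$ using the decay in $\langle\!\langle u(\tau)\rangle\!\rangle$ (each worth $\langle\tau\rangle^{-1}$ or $\langle\tau\rangle^{-1+\delta}$) and the last in $L^2$ against $\partial_t\bar{Z}^aSu_i$; this produces the $\langle\tau\rangle^{-1}(M_3^2+N_4^2)L(u(\tau))^2$, $\langle\tau\rangle^{-2}(M_3^3+N_4^3)N_4(u(\tau))$ and $\langle\tau\rangle^{-2}(M_3^2+N_4^2)N_4(u(\tau))^2$ terms, together with further $O(D(f,g)^6)$ remainders. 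The step I expect to be the main obstacle is the term flagged just after (\ref{lv311322}): the norm $\|\langle t-r\rangle^{-(1/2)-\eta}T_j\partial_t\bar{Z}^aL^cS^dv(t)\|_{L^2}$ with $|a|+|c|+d\le2$, which is \emph{not} part of the energy $N_4$ yet unavoidably appears -- for instance in $J_{i,1}$ when the null-form Leibniz rule forces the extra derivative onto a $\partial_t$ -- so one must in parallel run the ghost-weight identity for $\partial_t\bar{Z}^aSu_i$ (controlling the $\partial_t^2$-contributions through (\ref{lem315})--(\ref{lem316})) and absorb this quantity into the $\int_0^tG(u(\tau))^2\,d\tau$ on the left as well. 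Keeping the time-weights $\langle\tau\rangle^{-1}$, $\langle\tau\rangle^{-3/2}$, $\langle\tau\rangle^{-1+\eta}$, $\langle\tau\rangle^{-2}$ consistent across every term, and never letting a $\langle t-r\rangle^{-(1/2)-\eta}T_j(\cdots)$-factor escape being absorbed by $G$, is where the care is needed.
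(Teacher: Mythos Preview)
Your proposal is essentially correct and follows the same ghost-weight/region-splitting strategy as the paper, including the key recognition that one must run the identity again for $\partial_t\bar Z^aSu_i$ to capture the $T_j\partial_t$-norms in $G(u)$. Two minor corrections: in the paper's convention (\ref{howg}) one takes $g'(\rho)=-\langle\rho\rangle^{-1-2\eta}<0$ (not $g'>0$), so that in (\ref{feb19}) the term $-\tfrac12 g'\sum(T_j\cdots)^2$ is positive and moves to the left; and the $\langle\tau\rangle^{-1+\eta}(M_3+N_4)N_4G$ contribution arises from $\chi_2 q_1$ and $\chi_2 J_{i,1},\chi_2 J_{i,2}$ via (\ref{null23})--(\ref{null26}), not from $q_2$.
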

%%%%%%%%%%%%%%%%%%
%%%%%%%%%%%%%%%%%
\begin{proof}
Due to (\ref{lem311}), we may use Lemma \ref{pointwise} repeatedly. 
We also remark that in view of (\ref{n4}) and (\ref{mj+1}), 
we have the Sobolev-type inequalities 
\begin{align}
&
\|\langle t-r\rangle v(t)\|_{L^6({\mathbb R}^3)}
\leq
CM_1(v(t))+CN_1(v(t)),\label{3ineq1}\\
&
\|\langle t-r\rangle \partial v(t)\|_{L^6({\mathbb R}^3)}
\leq
CN_2(v(t)),\label{3ineq2}\\
&
\langle t-r\rangle |v(t,x)|
\leq
CM_1(v(t))+CN_2(v(t)),\label{3ineq3}\\
&
\langle t-r\rangle |\partial v(t,x)|
\leq
CN_3(v(t)),\label{3ineq4}\\
&
\|r^{(1/2)+\theta}\langle t-r\rangle^{1-\theta}v(t,x)\|_{L_r^\infty L_\omega^4}
\leq
CM_1(u(t))+CN_1(u(t)),
\label{hypart1}\\
&
\|r^{(1/2)+\theta}\langle t-r\rangle^{1-\theta}\partial v(t,x)\|_{L_r^\infty L_\omega^4}
\leq
CN_2(u(t))
\label{hyderivative}
\end{align}
(see Lemma \ref{manysob} and Lemma \ref{hyinequality}) which will be frequently employed 
in the following discussion.

The estimate of the $L^1({\mathbb R}^3)$-norm 
of each term in (\ref{dec2}) is carried out 
over the set $\{x\in{\mathbb R}^3:|x|<(1+t)/2\}$ 
and its complement set, separately, for any fixed time $t\in (0,T)$. 
It is therefore useful to introduce 
the characteristic function 
$\chi_1(x)$ of the former set, 
and we set $\chi_2(x):=1-\chi_1(x)$. 

\subsection{Estimate over the set $\{x\in{\mathbb R}^3:|x|<(1+t)/2\}$}~\\
\noindent{\bf $\cdot$Estimate of $\chi_1 q$.} 
Recall the definition of $q$, $q_1$, and $q_2$ 
(see (\ref{feb19}), (\ref{dec3}), and (\ref{dec4})). 
Due to (\ref{lem311}), we easily obtain the elementary bound
\begin{equation}\label{bdd22}
|G_i^{\alpha\beta}(u,\partial u)|
\leq
C(|u|+|\partial u|),
\quad
|\partial G_i^{\alpha\beta}(u,\partial u)|
\leq
C(|\partial u|+|\partial^2 u|).
\end{equation}
%%%%%%%%%%%%%%%%%%%%%%%%%%%%%%
Using (\ref{lem312}) to handle $\partial_t^2 u_j(t,x)$ 
and using (\ref{lem311}) to get the simple inequality 
$|u(t,x)|^3\leq C|u(t,x)|$, we obtain 
%%%%%%%%%%%%%%%%%%%%%%%%%%%%%%%%%%%
\begin{align}\label{1817}
\|&\chi_1q_1\|_{L^1({\mathbb R}^3)}\\
&
\leq
C\sum_{|b|\leq 1}
\|
\chi_1
|\partial\partial_x^b u(t)|
|\partial {\bar Z}^a S u_i(t)|^2
\|_{L^1({\mathbb R}^3)}
+
C
\|
\chi_1
|u(t)|
|\partial {\bar Z}^a S u_i(t)|^2
\|_{L^1({\mathbb R}^3)}\nonumber\\
&
\leq
C
\langle t\rangle^{-1}
\biggl(
\sum_{|b|\leq 1}
\|
r^{1/2}\langle t-r\rangle
\partial\partial_x^b u(t)
\|_{L^\infty({\mathbb R}^3)}
\biggr)
\|
r^{-1/4}\partial{\bar Z}^a S u_i(t)
\|_{L^2({\mathbb R}^3)}^2\nonumber\\
&
+
C
\langle t\rangle^{-1}
\|r^{1/2}\langle t-r\rangle u(t)\|_{L^\infty({\mathbb R}^3)}
\|
r^{-1/4}
\partial {\bar Z}^a S u_i(t)
\|_{L^2({\mathbb R}^3)}^2\nonumber\\
&
\leq
C\langle t\rangle^{-1}
\bigl(
M_2(u(t))
+
N_4(u(t))
\bigr)
L(u(t))^2,\nonumber
\end{align}
%%%%%%%%%%%%%%%%%%%%%%%%%%%%%%%%%%%%%%%%%
where we have used (\ref{hyderivative}) and (\ref{hypart1}) with $\theta=0$ 
together with the Sobolev embedding 
$W^{1,4}(S^2)\hookrightarrow L^\infty(S^2)$. 

For the estimate of $\chi_1 g'(t-r)q_2$, 
we choose $g=g(\rho)$ 
$(\rho\in{\mathbb R})$ 
\begin{equation}\label{howg}
g(\rho)
=
-\int_0^\rho
\langle
{\hat \rho}
\rangle^{-1-2\eta}
d{\hat \rho}
\quad
\mbox{so that}\,\,\,
g'(\rho)
=
-\langle \rho\rangle^{-1-2\eta}.
\end{equation}
%%%%%%%%%%%%%%%%%%%%%%%%%%%%%%%%%%%%%%%
Using the first inequality in (\ref{bdd22}) and 
(\ref{3ineq3})--(\ref{3ineq4}), we get
%%%%%%%%%%%%%%%%%%%
\begin{align}
\|&
\chi_1g'(t-r)q_2
\|_{L^1({\mathbb R}^3)}\\
&
\leq
C\langle t\rangle^{-2-2\eta}
\bigl(
\|
\chi_1\langle t-r\rangle u(t)
\|_{L^\infty({\mathbb R}^3)}
+
\|
\chi_1\langle t-r\rangle \partial u(t)
\|_{L^\infty({\mathbb R}^3)}
\bigr)\nonumber\\
&
\hspace{2.3cm}
\times
\|
\partial{\bar Z}^a Su_i(t)
\|_{L^2({\mathbb R}^3)}^2\nonumber\\
&
\leq
C\langle t\rangle^{-2-2\eta}
\bigl(
M_1(u(t))
+
N_3(u(t))
\bigr)
N_4(u(t))^2.
\nonumber
\end{align}
%%%%%%%%%%%%%%%%%%%%%%%%%%
The estimate of $\chi_1 q$ has been finished. 

\noindent{\bf$\cdot$Estimate of $\chi_1 J_{i,1}$.} 
We next estimate $\chi_1 J_{i,1}$ 
by basically following \cite{HZ2019} and 
paying attention on the number of 
occurrences of $S$. 

\noindent \underline{Case 1. $d'=1$, $d''=0$.} 

\noindent \underline{Case 1-1. $|a'|=0$, $|a''|\leq 2$.} 
We employ (\ref{lem312}), (\ref{lem313}) and (\ref{lem314}) 
to deal with $\partial_t^2{\bar Z}^{a''}u_i$, 
and we then use (\ref{lem311}) to get 
$|u||{\bar Z}^{b} u|^2$, 
$|u|^2|{\bar Z}^{b'} u|\leq C|u|$ 
for $|b|\leq 1$ and $|b'|\leq 2$. 
In this way, we get
\begin{align}\label{2019aug31456}
\|&
\chi_1(\partial Su_j(t))
(\partial^2{\bar Z}^{a''}u_i(t))
(\partial_t{\bar Z}^a Su_i(t))\|_{L^1({\mathbb R}^3)}\\
&
\leq
C\sum_{|b|\leq |a''|}
\|
\chi_1
(\partial Su_j(t))
(\partial\partial_x{\bar Z}^b u(t))
(\partial_t{\bar Z}^a Su_i(t))\|_{L^1({\mathbb R}^3)}\nonumber\\
&
\hspace{0.2cm}
+
C\sum_{|b|\leq |a''|}
\|
\chi_1
(\partial Su_j(t))
(\partial{\bar Z}^b u(t))
(\partial_t{\bar Z}^a Su_i(t))\|_{L^1({\mathbb R}^3)}\nonumber\\
&
\hspace{0.2cm}
+
C
\|
\chi_1
(\partial Su_j(t))
|u(t)|
(\partial_t{\bar Z}^a Su_i(t))\|_{L^1({\mathbb R}^3)}
\nonumber\\
&
=:
J_{i,1}^{(1)}
+
J_{i,1}^{(2)}
+
J_{i,1}^{(3)}.\nonumber
\end{align}
%%%%%%%%%%%%%%%%%%%%%%%%%%%%%%%%%%5
Using (\ref{sob23}) and (\ref{trd}), we obtain 
%%%%%%%%%%%%%%%%%%%%%%%%%%%%%%%%%%%%5
\begin{align}\label{estji11}
J_{i,1}^{(1)}
&
\leq
C\sum_{|b|\leq 2}
\langle t\rangle^{-1}
\| r^{1/2}\partial S u_j(t)\|_{L^\infty({\mathbb R}^3)}
\|r^{-1/4}\langle t-r\rangle\partial_x\partial{\bar Z}^b u(t)\|_{L^2({\mathbb R}^3)}\\
&
\hspace{2.3cm}
\times
\| r^{-1/4}\partial_t{\bar Z}^a S u_i(t)\|_{L^2({\mathbb R}^3)}\nonumber\\
&
\leq
C\langle t\rangle^{-1}N_4(u(t))L(u(t))^2.\nonumber
\end{align}
%%%%%%%%%%%%%%%%%%%%%%%%%%%%%%%%%%%%%%%%%%%%%%
We use (\ref{hyderivative}) with $\theta=0$ to get
\begin{align}\label{estj12}
J_{i,1}^{(2)}&\leq
C\sum_{|b|\leq 2}
\langle t\rangle^{-1}
\|r^{-1/4}\partial S u_j(t)\|_{L^2_r L^4_\omega}
\|r^{1/2}\langle t-r\rangle\partial{\bar Z}^b u(t)\|_{L^\infty_r L^4_\omega}\\
&
\hspace{2cm}
\times
\|r^{-1/4}\partial_t{\bar Z}^a S u_i(t)\|_{L^2({\mathbb R}^3)}\nonumber\\
&
\leq
C\langle t\rangle^{-1}
N_4(u(t))L(u(t))^2.\nonumber
\end{align}
%%%%%%%%%%%%%%%%%%%%%%%%%%%%%%%
%%%%%%%%%%%%%%%%%%%%%%%%%%%%%%%%%
Similarly, we obtain by (\ref{hypart1})
\begin{equation}\label{estj13}
J_{i,1}^{(3)}
\leq
C
\langle t\rangle^{-1}
\bigl(
M_1(u(t))
+
N_1(u(t))
\bigr)
L(u(t))^2.
\end{equation}
%%%%%%%%%%%%%%%%%%%%%%%%%%%%%
\noindent\underline{Case 1-2. $|a'|\leq 1$, $|a''|\leq 1$.} 
Using (\ref{lem311}), (\ref{lem312}), and (\ref{lem313}), we get
\begin{align}
\|&
\chi_1
(\partial{\bar Z}^{a'}Su_j(t))
(\partial^2{\bar Z}^{a''}u_i(t))
(\partial_t{\bar Z}^a Su_i(t))\|_{L^1({\mathbb R}^3)}\\
&
\leq
C\sum_{|b|\leq |a''|}\sum_{|b'|\leq 1}
\|
\chi_1
(\partial{\bar Z}^{a'} Su_j(t))
(\partial\partial_x^{b'}{\bar Z}^b u(t))
(\partial_t{\bar Z}^a Su_i(t))\|_{L^1({\mathbb R}^3)}
\nonumber\\
&
\hspace{0.2cm}
+
C
\|
\chi_1
(\partial{\bar Z}^{a'} Su_j(t))
|u(t)|
(\partial_t{\bar Z}^a Su_i(t))\|_{L^1({\mathbb R}^3)}
\nonumber\\
&
=:
J_{i,1}^{(4)}+J_{i,1}^{(5)}.\nonumber
\end{align}
%%%%%%%%%%%%%%%%%%%%%%%
We have only to handle $J_{i,1}^{(4)}$ and $J_{i,1}^{(5)}$ 
in the same way as in (\ref{estj12}) and (\ref{estj13}), 
respectively. 
%%%%%%%%%%%%%%%%%%%%%%%%%%%%%%%%%%%%%%%

\noindent\underline{Case 1-3. $|a'|\leq 2$, $|a''|=0$.} 
Proceeding as in (\ref{1817}), we can obtain 
%%%%%%%%%%%%%%%%%%%%%%%%%%%%%%%%%%%%
\begin{align}\label{estjan25}
\|&
\chi_1
(\partial{\bar Z}^{a'}Su_j(t))
(\partial^2 u_i(t))
(\partial_t{\bar Z}^a Su_i(t))\|_{L^1({\mathbb R}^3)}\\
&
\leq
C
\langle t\rangle^{-1}
\bigl(
M_2(u(t))
+
N_4(u(t))
\bigr)
L(u(t))^2.\nonumber
\end{align}
%%%%%%%%%%%%%%%%%%%%%%%%%%%%%%%
\noindent\underline{Case 2.} $d'=0$, $d''=1$. In this case, 
we know $|a''|\leq 1$. 

\noindent\underline{Case 2-1. $|a'|\leq 1$, $|a''|\leq 1$.} 

Using (\ref{lem311}), (\ref{lem313}) and (\ref{lem314}), we get
\begin{align}
\|&
\chi_1
(\partial{\bar Z}^{a'} u_j(t))
(\partial^2{\bar Z}^{a''} S u_i(t))
(\partial_t{\bar Z}^a Su_i(t))\|_{L^1({\mathbb R}^3)}\\
&
\leq
C
\sum_{|b|, |b'|\leq 1}
\|
\chi_1
(\partial{\bar Z}^{a'} u_j(t))
(\partial\partial_x^{b'}{\bar Z}^b S u(t))
(\partial_t{\bar Z}^a Su_i(t))\|_{L^1({\mathbb R}^3)}
\nonumber\\
&
\hspace{0.2cm}
+
C
\sum_{|b|\leq 1}
\|
\chi_1
(\partial{\bar Z}^{a'} u_j(t))
|u(t)|
(\partial_t{\bar Z}^a Su_i(t))\|_{L^1({\mathbb R}^3)}
\nonumber\\
&
=:
J_{i,1}^{(6)}+J_{i,1}^{(7)}.\nonumber
\end{align}
%%%%%%%%%%%%%%%%%%%%%%%%%%%%%%
Here, the term $J_{i,1}^{(7)}$ has appeared 
because we have used (\ref{lem311}) to get 
%%%%%%%%%%%%%%%%%%%%%%%%%%%%%%%%%%
\begin{align}\label{product320}
|&u(t,x)||{\bar Z}^b u(t,x)||S u(t,x)|,\,\,
|u(t,x)|^2|{\bar Z}^b S u(t,x)|\\
&
\leq
C
\langle t\rangle^{-1+2\delta}(\varepsilon_1^*)^2|u(t,x)|
\leq
C|u(t,x)|,\,\,\,|b|\leq 1.\nonumber
\end{align}
(Recall that we are assuming $\delta\leq 1/2$.) 
%%%%%%%%%%%%%%%%%%%%%%%%%%%%%%%
In order to bound 
$\|r^{1/2}\langle t-r\rangle\partial{\bar Z}^{a'} u_j(t)\|_{L^\infty({\mathbb R}^3)}$ 
and 
$\|r^{1/2}\langle t-r\rangle\partial u(t)\|_{L^\infty({\mathbb R}^3)}$, 
we employ (\ref{hyderivative}) and (\ref{hypart1}) with $\theta=0$ 
together with the Sobolev embedding 
$W^{1,4}(S^2)\hookrightarrow L^\infty(S^2)$, as in (\ref{1817}). 
We thus get 
%%%%%%%%%%%%%%%%%%%%%%
\begin{equation}\label{ji16march20}
J_{i,1}^{(6)}+J_{i,1}^{(7)}
\leq
C\langle t\rangle^{-1}
\bigl(
M_2(u(t))
+
N_4(u(t))
\bigr)
L(u(t))^2.
\end{equation}
%%%%%%%%%%%%%%%%%%%%%%%%%%%%%%%%%%%%%%%%%%
%%%%%%%%%%%%%%%%%%%%%%
%%%%%%%%%%%%%%%%%%%%%%%%%%%%%%%%

\noindent\underline{Case 2-2. $|a'|\leq 2$, $|a''|=0$.} 
We suitably modify the argument in (\ref{ji16march20}) 
by employing the $L^\infty_r L^4_\omega$ and the $L^2_r L^4_\omega$ norms. 
Using (\ref{lem311}) and (\ref{lem313}), we get 
%%%%%%%%%%%%%%%%%%%%%%%%%%%%%%%%
\begin{align} 
\|&
\chi_1
(\partial{\bar Z}^{a'} u_j(t))
(\partial^2 S u_i(t))
(\partial_t{\bar Z}^a Su_i(t))\|_{L^1({\mathbb R}^3)}\\
&
\leq
C\langle t\rangle^{-1}
\bigl(
M_2(u(t))
+
N_4(u(t))
\bigr)
L(u(t))^2.\nonumber
\end{align}
%%%%%%%%%%%%%%%%%%%
\noindent{\bf $\cdot$Estimate of $\chi_1 J_{i,2}$.} 
It suffices to explain how to bound 
$$
\|
\chi_1
(\partial{\bar Z}^{a'} u_j(t))
(\partial{\bar Z}^{a''} S u_k(t))
(\partial_t{\bar Z}^a Su_i(t))\|_{L^1({\mathbb R}^3)}
$$ 
for $|a'|+|a''|\leq 2$. We employ 
$\|
\chi_1
r^{1/2}\langle t-r\rangle
\partial{\bar Z}^{a'} u_j(t)\|_{L^\infty({\mathbb R}^3)}$ 
if $|a'|\leq 1$, 
$\|
\chi_1
r^{1/2}\langle t-r\rangle
\partial{\bar Z}^{a'} u_j(t)\|_{L_r^\infty L_\omega^4({\mathbb R}^3)}$ 
if $|a'|=2$. 
We then obtain 
\begin{equation}
\|\chi_1 J_{i,2}\|_{L^1({\mathbb R}^3)}
\leq
C\langle t\rangle^{-1}
N_4(u(t))L(u(t))^2.
\end{equation}
%%%%%%%%%%%%%%%%%%%%%%%%%%%%%%
\noindent{\bf $\cdot$Estimate of $\chi_1 J_{i,3}$.} 
We recall that 
$G_i^{\alpha\beta}(u,v)$ is a 
homogeneous polynomial of degree $2$, 
and therefore 
$G_i^{\alpha\beta}(u,\partial u)$ has the form of 
sum of constant multiples of 
$u_ju_k$, 
$u_j\partial_\gamma u_k$, 
and $(\partial_\gamma u_j)(\partial_\delta u_k)$. 
For the estimate of $\chi_1 J_{i,3}$, 
it suffices to repeat the same argument as we have done above 
and obtain 
%%%%%%%%%%%%%%%%%%%%%%%%%%%%%%%%%%%%%%%%
\begin{align}\label{boundj13}
\|\chi_1 J_{i,3}\|_{L^1({\mathbb R}^3)}
\leq&
C
\langle t\rangle^{-1}
\bigl(
M_3(u(t))^2
+
N_4(u(t))^2
\bigr)
L(u(t))^2
\\
&
+
C
\langle t\rangle^{-2}
\bigl(
M_2(u(t))
+
N_3(u(t))
\bigr)^3
N_4(u(t)).\nonumber
\end{align}
%%%%%%%%%%%%%%%%%%%%%%%%%%%%%%%%
As for the proof of this bound, 
it suffices to mention how to deal with 
such a typical term as 
$\bigl(S(u_j u_k)\bigr)({\bar Z}^a\partial_{\alpha\beta}^2 u_i)
\partial_t{\bar Z}^a Su_i$ 
$(|a|=2)$. 
We use (\ref{lem314}) to deal with ${\bar Z}^a\partial_t^2 u_i$, 
and then we use (\ref{lem311}) to get 
$|u({\bar Z}^{a'}u)({\bar Z}^{a''}u)|\leq C|u|$ 
for $|a'|,|a''|\leq 1$ 
and $|u^2 {\bar Z}^a u|\leq C|u|$ for $|a|\leq 2$. 
We thus obtain by (\ref{hypart1}) with $\theta=0$ and (\ref{3ineq1})
\begin{align}\label{ji3july2}
\|&
\chi_1
\bigl(S(u_j(t) u_k(t))\bigr)
({\bar Z}^a\partial_{\alpha\beta}^2 u_i(t))
\partial_t{\bar Z}^a Su_i(t)
\|_{L^1({\mathbb R}^3)}\\
&
\leq
C
\langle t\rangle^{-1}
\|
r^{1/2}
\langle t-r\rangle
u(t)
\|_{L^\infty({\mathbb R}^3)}
\|Su(t)\|_{L^\infty({\mathbb R}^3)}
L(u(t))^2
\nonumber\\
&
+
C
\langle t\rangle^{-2}
\|
\langle t-r\rangle u(t)
\|_{L^6({\mathbb R}^3)}^2
\|Su(t)\|_{L^6({\mathbb R}^3)}
N_4(u(t))
\nonumber\\
&
\leq
C
\langle t\rangle^{-1}
\bigl(
M_2(u(t))
+
N_2(u(t))
\bigr)
M_3(u(t))
L(u(t))^2\nonumber\\
&
+
C
\langle t\rangle^{-2}
\bigl(
M_1(u(t))
+
N_1(u(t))
\bigr)^2
N_2(u(t))N_4(u(t)).\nonumber
\end{align}
%%%%%%%%%%%%%%%%%%%%%%%%%
Here, we have used the standard Sobolev inequalities to handle 
$\|Su_j(t)\|_{L^p({\mathbb R}^3)}$, $p=\infty$, $6$. 
All the other terms can be handled in a similar way. 
%%%%%%%%%%%%%%%%%%%%%%%%%%%%%%%%%%%%

\noindent{\bf$\cdot$Estimate of $\chi_1 J_{i,4}$.} We recall that 
$H_i(u,v)$ is a homogeneous polynomial of degree $3$ 
in $u$ and $v$, 
and therefore $H(u,\partial u)$ has the form of 
sum of constant multiples of 
$u_ju_ku_l$, 
$u_ju_k\partial_\gamma u_l$, 
$u_j(\partial_\beta u_k)(\partial_\gamma u_l)$, 
and 
$(\partial_\alpha u_j)(\partial_\beta u_k)(\partial_\gamma u_l)$. 
It is possible to obtain 
%%%%%%%%%%%%%%%%%%%%%%%%%%%%%%%%%%%%%%
\begin{equation}\label{1j14312}
\|\chi_1 J_{i,4}\|_{L^1({\mathbb R}^3)}
\leq
C\langle t\rangle^{-2}
\bigl(
M_3(u(t))^3
+
N_4(u(t))^3
\bigr)
N_4(u(t))
\end{equation}
%%%%%%%%%%%%%%%%%%%%%%%%%%%%%%%%
%%%%%%%%%%%%%%%%%%%%%%%%%%%%%%%%
%%%%%%%%%%%%%%%%%%%%%%%%%%%%%%%%%
by using (\ref{3ineq1}) together with the H\"older-type inequality 
\begin{equation}\label{holdertype1}
\|\chi_1 v_1\cdots v_4\|_{L^1}
\leq
C
\langle t\rangle^{-2}
\|\langle t-r\rangle v_1\|_{L^6}
\|\langle t-r\rangle v_2\|_{L^6}
\|v_3\|_{L^6}
\|v_4\|_{L^2}
\end{equation}
(see (\ref{ji3july2})) or 
(\ref{3ineq3}), (\ref{3ineq4}) together with the H\"older-type inequality 
\begin{equation}\label{holdertype2}
\|\chi_1 v_1\cdots v_4\|_{L^1}
\leq
C
\langle t\rangle^{-2}
\|\langle t-r\rangle v_1\|_{L^\infty}
\|\langle t-r\rangle v_2\|_{L^\infty}
\|v_3\|_{L^2}
\|v_4\|_{L^2}.
\end{equation}
%%%%%%%%%%%%%%%%%%%%%%%%%5

\noindent{\bf$\cdot$Estimate of $\chi_1 J_{i,5}$.} 
Recall that ${\bar Z}^a$ does not contain the operator $S$. 
Therefore, using (\ref{lem312}), (\ref{lem313}), and (\ref{lem3145}) 
to deal with $\partial_t^2{\bar Z}^a u$ 
$(|a|\leq 2)$ 
and proceeding as we have done in dealing with $\chi_1 J_{i,4}$ 
just above, we easily obtain
%%%%%%%%%%%%%%%%%%%%%%%%%%%%%%%%%%%%
\begin{equation}\label{ji52019july261736}
\|\chi_1 J_{i,5}\|_{L^1({\mathbb R}^3)}
\leq
C
\langle t\rangle^{-2}
\bigl(
M_3(u(t))^3+N_4(u(t))^3
\bigr)
N_4(u(t)).
\end{equation}
%%%%%%%%%%%%%%%%%%%%%%%
\subsection{Estimate over the set $\{x\in{\mathbb R}^3:|x|>(1+t)/2\}$} 
In contrast with the former subsection, 
we fully exploit the null condition. 
We start with the estimate of 
$\chi_2 q_1$. 
As for the third term on the right-hand side of (\ref{dec3}), 
we basically follow the argument in \cite{HZ2019}. 
Namely, 
we first employ (\ref{null25}) and then 
(\ref{tv1}), (\ref{lem311})--(\ref{lem312}), (\ref{sob25}), and 
(\ref{hypart1})--(\ref{hyderivative}) with $\theta=(1/2)-\eta$ to get
\begin{align}\label{feb71}
\|&\chi_2
F_i^{j,\alpha\beta\gamma}
(\partial_{t\gamma}^2 u_j(t))
(\partial_\beta{\bar Z}^a S u_i(t))
(\partial_\alpha{\bar Z}^a S u_i(t))
\|_{L^1({\mathbb R}^3)}\\
&
\leq
C\sum_{k,j}
\|
\chi_2
(T_k\partial_t u_j(t))
(\partial{\bar Z}^a S u_i (t))^2
\|_{L^1({\mathbb R}^3)}\nonumber\\
&
\hspace{0.2cm}
+C\sum_{k,j}
\|
\chi_2 (\partial_t\partial u_j(t))
(T_k{\bar Z}^a S u_i (t))
(\partial{\bar Z}^a S u_i(t))
\|_{L^1({\mathbb R}^3)}\nonumber\\
&
\leq
C\langle t\rangle^{-2}
\sum_{{|b|+|c|}\atop{+d\leq 1}}
\|r\Omega^b L^c S^d \partial_t u_j(t)\|_{L^\infty({\mathbb R}^3)}
\|\partial{\bar Z}^a S u_i(t)\|_{L^2({\mathbb R}^3)}^2\nonumber\\
&
\hspace{0.2cm}
+
C\langle t\rangle^{-1+\eta}
\|r^{1-\eta}\langle t-r\rangle^{(1/2)+\eta}
(|\partial\partial_x u(t)|+|\partial u(t)|+|u(t)|)
\|_{L^\infty({\mathbb R}^3)}\nonumber\\
&
\hspace{1cm}
\times
\|\langle t-r\rangle^{-(1/2)-\eta}
T{\bar Z}^a Su_i(t)\|_{L^2({\mathbb R}^3)}
\|\partial{\bar Z}^a S u_i(t)\|_{L^2({\mathbb R}^3)}\nonumber\\
&
\leq
C\langle t\rangle^{-2}N_4(u(t))^3
+
C
\langle t\rangle^{-1+\eta}
\bigl(
M_2(u(t))
+
N_4(u(t))
\bigr)
G(u(t))N_4(u(t)).\nonumber
\end{align}
Using (\ref{null24}) in place of (\ref{null25}) 
and repeating the same argument as above, 
we have a similar bound for 
the first term on the right-hand side of (\ref{dec3}). 

As for the second and 
the fourth terms on the right-hand side of (\ref{dec3}), 
we use the elementary bound
\begin{equation}\label{feb72}
|\partial G(u,\partial u)|
\leq
C(|u|+|\partial u|)(|\partial u|+|\partial^2 u|)
\end{equation}
and employ (\ref{lem311})--(\ref{lem312}) to handle $\partial_t^2 u$. 
We get by (\ref{sob25})
%%%%%%%%%%%%%%%%%%%%%%%%%%%%%%%%%
\begin{align}\label{feb73}
\|&\chi_2 (\partial G(u,\partial u))
|\partial{\bar Z}^a Su_i(t)|^2\|_{L^1({\mathbb R}^3)}\\
&
\leq
\|
\chi_2(|u(t)|+|\partial u(t)|)
(|u(t)|+|\partial u(t)|+|\partial\partial_x u(t)|)
\|_{L^\infty({\mathbb R}^3)}
N_4(u(t))^2\nonumber\\
&
\leq
C
\langle t\rangle^{-2}
\bigl(
M_2(u(t))+N_4(u(t))
\bigr)^2
N_4(u(t))^2.\nonumber
\end{align}
%%%%%%%%%%%%%%%%%%%%%%%%%%%%%%%%%%%%
Suitably modifying the argument in 
(\ref{feb71})--(\ref{feb73}), 
we also obtain
%%%%%%%%%%%%%%%%%%%%%%%%%%%%%%%
\begin{align}
\|&
\chi_2 g'(t-r) q_2
\|_{L^1({\mathbb R}^3)}\\
&
\leq
C\langle t\rangle^{-3/2}N_3(u(t))N_4(u(t))^2%\nonumber\\
%&
%\hspace{0.2cm}
+
C\langle t\rangle^{-1}
N_3(u(t))G(u(t))N_4(u(t))\nonumber\\
&
\hspace{0.2cm}
+
C
\langle t\rangle^{-2}
\bigl(
M_2(u(t))+N_3(u(t))
\bigr)^2
N_4(u(t))^2.\nonumber
\end{align}
%%%%%%%%%%%%%%%%%%%%%%%%%
The estimate of $\chi_2 q$ has been finished. 

\noindent{\bf$\cdot$Estimate of $\chi_2 J_{i,1}$.} We basically follow 
the corresponding argument in \cite{HZ2019}. 
Using (\ref{null23}), 
we get
\begin{align}\label{2019aug31459}
\|&\chi_2 J_{i,1}\|_{L^1({\mathbb R}^3)}\\
&
\leq
\sum\!{}^{'}
\|
\chi_2{\hat F}_i^{j,\alpha\beta\gamma}
(\partial_\gamma{\bar Z}^{a'}S^{d'}u_j(t))
(\partial_{\alpha\beta}^2{\bar Z}^{a''}S^{d''}u_i(t))
(\partial_t{\bar Z}^aSu_i(t))
\|_{L^1({\mathbb R}^3)}\nonumber\\
&
\leq
C\sum_{k,j}
\biggl(
\|
\chi_2
(T_k{\bar Z}^{a'}S^{d'}u_j(t))
(\partial^2{\bar Z}^{a''}S^{d''}u_i(t))
\|_{L^2({\mathbb R}^3)}\nonumber\\
&
\hspace{2cm}
+
\|
\chi_2
(\partial{\bar Z}^{a'}S^{d'}u_j(t))
(T_k\partial{\bar Z}^{a''}S^{d''}u_i(t))
\|_{L^2({\mathbb R}^3)}
\biggr)
N_4(u(t))\nonumber\\
&
=:
C\sum_{k,j}(K_1+K_2)N_4(u(t)).\nonumber
\end{align}
%%%%%%%%%%%%%%%%%%%%%%%%%%%%%%%%%%%%%%%%%%
Before proceeding, 
we recall that 
$|a'|+|a''|\leq 2$, $d'+d''\leq 1$, 
and $|a''|+d''\leq 2$. 
It suffices to discuss only the case $d'+d''=1$; 
the argument becomes easier otherwise. 
%%%%%%%%%%%%%%%%%%%%%%%%%%%%%%%%%

\noindent\underline{Case 1. $d'=1$, $d''=0$.} 

\noindent\underline{Case 1-1. $|a'|=0$, $|a''|\leq 2$.} 
Obviously, it suffices to handle only the case $|a''|=2$. 
Using (\ref{lem311}), (\ref{lem314}), (\ref{trd}), and 
(\ref{hypart1})--(\ref{hyderivative}) 
with $\theta=(1/2)-\eta$, we obtain 
%%%%%%%%%%%%%%%%%%%%%%%%%%%%%%%%%%%%%%
\begin{align}
K_1
&
\leq
C\langle t\rangle^{-1}
\|
r\langle t-r\rangle^{-1}T_k S u_j(t)
\|_{L^\infty({\mathbb R}^3)}
\|
\langle t-r\rangle\partial_x\partial{\bar Z}^{a''}u(t)
\|_{L^2({\mathbb R}^3)}\\
&
+
C\langle t\rangle^{-1+\eta}
\|
\langle t-r\rangle^{-(1/2)-\eta}
T_kS u_j(t)
\|_{L^2_r L^4_\omega}\nonumber\\
&
\hspace{2cm}
\times
\sum_{|b|\leq 2}
\|
r^{1-\eta}
\langle t-r\rangle^{(1/2)+\eta}
\partial{\bar Z}^b u(t)
\|_{L^\infty_r L^4_\omega}
\nonumber\\
&
+
C\langle t\rangle^{-1+\eta}
\|
\langle t-r\rangle^{-(1/2)-\eta}
T_kS u_j(t)
\|_{L^2({\mathbb R}^3)}
\|
r^{1-\eta}
\langle t-r\rangle^{(1/2)+\eta} u(t)
\|_{L^\infty({\mathbb R}^3)}\nonumber\\
&
\leq
C\langle t\rangle^{-1}G(u(t))N_4(u(t))
+
C\langle t\rangle^{-1+\eta}G(u(t))
\bigl(
N_4(u(t))
+
M_2(u(t))
\bigr).\nonumber
\end{align}
%%%%%%%%%%%%%%%%%%%%%%%%%%
Here, to handle 
$\|
r\langle t-r\rangle^{-1}T_k S u_j(t)
\|_{L^\infty({\mathbb R}^3)}$, 
we have used (see, e.g., (27), (28) in \cite{Zha})
%%%%%%%%%%%%%%%%%%%%%%%%%%%%%%%%
\begin{equation}
[\Omega_{ij},T_k]
=
\delta_{kj}T_i
-
\delta_{ki}T_j,
\quad
\partial_r T_i
=
\sum_{k=1}^3\frac{x_k}{r}T_i\partial_k
\end{equation}
together with (\ref{sob25}). 
%%%%%%%%%%%%%%%%%%%%%%%%%%%%%%%%%%%%%%%%%
It is easy to get by (\ref{sob25}) and (\ref{tv1})
\begin{align}
K_2&\leq
C\langle t\rangle^{-2}
\|r\partial Su_j(t)\|_{L^\infty({\mathbb R}^3)}
\|
r
T_k\partial{\bar Z}^{a''}u_i(t)\|_{L^2({\mathbb R}^3)}\\
&
\leq
C\langle t\rangle^{-2}N_4(u(t))^2.\nonumber
\end{align}
%%%%%%%%%%%%%%%%%%%%%%%%%
\noindent\underline{Case 1-2. $|a'|\leq 1$ and $|a''|\leq 1$.} 
Employing 
$\|r\langle t-r\rangle^{-1}T_k{\bar Z}^{a'}
Su_j(t)\|_{L^\infty_r L^4_\omega}$ 
and 
$\|\langle t-r\rangle\partial_x\partial{\bar Z}^{a''}u_i
(t)\|_{L^2_r L^4_\omega}$ 
and naturally modifying the argument in Case 1-1, 
we get the same bound for $K_1$ as in Case 1-1. 
Also, employing 
$\|r\partial{\bar Z}^{a'}Su_j(t)\|_{L^\infty_r L^4_\omega}$ 
and 
$\|
r
T_k\partial{\bar Z}^{a''}u_i(t)\|_{L^2_r L^4_\omega}$, 
we get the same bound for $K_2$ as in Case 1-1. 
%%%%%%%%%%%%%%%%%%%%%%%%%

\noindent\underline{Case 1-3. $|a'|\leq 2$ and $|a''|=0$.} 
Using (\ref{lem311}), (\ref{lem312}) and 
(\ref{hypart1})--(\ref{hyderivative}) with $\theta=(1/2)-\eta$, 
we easily obtain
%%%%%%%%%%%%%%%%%%%%%%%%%%%%%%%%%%%%%%
\begin{equation}
K_1
\leq
C\langle t\rangle^{-1+\eta}
G(u(t))N_4(u(t))
+
C\langle t\rangle^{-1+\eta}
G(u(t))M_2(u(t)).
\end{equation}
%%%%%%%%%%%%%%%%%%%%%%%%%%%%%%%%
Also, using 
(\ref{tv1}) first and then (\ref{sob23}), we easily get 
%%%%%%%%%%%%%%%%%%%%%%%%%%
\begin{equation}
K_2
\leq
C
\langle t\rangle^{-2}
N_4(u(t))
\|r\bigl(rT_k\partial u_i(t)\bigr)\|_{L^\infty({\mathbb R}^3)}
\leq
C\langle t\rangle^{-2}N_4(u(t))^2.
\end{equation}
%%%%%%%%%%%%%%%%%%%%%%%%%%%%%

\noindent\underline{Case 2. $d'=0$ and $d''=1$.} 

\noindent\underline{Case 2-1. $|a'|\leq 1$ and $|a''|\leq 1$.} 
We employ (\ref{lem313}), (\ref{lem314}) together with 
(\ref{product320}) to get 
%%%%%%%%%%%%%%%%%%%%%%%%%%%%%%%%%%
\begin{align}
K_1&\leq
\|
\chi_2
(T_k {\bar Z}^{a'} u_j(t))
(\partial^2 {\bar Z}^{a''}S u_i(t))
\|_{L^2({\mathbb R}^3)}\\
&
\leq
C
\langle t\rangle^{-3/2}
\|
r^{1/2}
\bigl(
rT_k{\bar Z}^{a'} u_j(t)
\bigr)
\|_{L^\infty({\mathbb R}^3)}
\bigl(
N_4(u(t))
+
M_1(u(t))
\bigr)
\nonumber\\
&
\leq
C
\langle t\rangle^{-(3/2)}
N_4(u(t))
\bigl(
M_1(u(t))
+
N_4(u(t))
\bigr).\nonumber
\end{align}
%%%%%%%%%%%%%%%%%%%%%%%%%%%%%%%%%%%%%%%%%%5
Here, we have used (\ref{tv1}), (\ref{sob23}), (\ref{conf2}). 
As for $K_2$, we employ (\ref{hyderivative}) with $\theta=(1/2)-\eta$ and 
easily get 
\begin{equation}\label{troublesome}
K_2
\leq
C\langle t\rangle^{-1+\eta}
N_4(u(t))
G(u(t)).
\end{equation}
It should be noted that 
this is the one of the places where 
we encounter the norm 
$\|
\langle t-r\rangle^{-(1/2)-\eta}
T_j\partial_t{\bar Z}^aL^cS^d u_i(t)
\|_{L^2({\mathbb R}^3)}$ 
$(|a|+|c|+d\leq 2, |c|+d\leq 1)$. 
%%%%%%%%%%%%%%%%%%%%%%

\noindent\underline{Case 2-2. $|a'|\leq 2$ and $|a''|=0$.} 
Using the $L_r^\infty L_\omega^4$-norm 
and the $L_r^2 L_\omega^4$-norm, 
we naturally modify the argument in the above case 
to get 
the same bound for $K_1$ and $K_2$ 
as in Case 2-1. We have finished the estimate of $\chi_2 J_{i,1}$.
%%%%%%%%%%%%%%%%%%%%%%%%

\noindent{\bf$\cdot$Estimate of $\chi_2 J_{i,2}$.} We need to bound 
$\|
\chi_2{\hat F}_i^{jk,\alpha\beta}
(\partial_\alpha{\bar Z}^{a'}S^{d'}u_j)
(\partial_\beta{\bar Z}^{a''}S^{d''}u_k)
\|_{L^2({\mathbb R}^3)}$ 
for 
$|a'|+|a''|\leq 2$, $d'+d''\leq 1$. 
Obviously, we may focus on the case 
$d'+d''=1$. 
It follows from (\ref{null26}) that 
\begin{align}
\|&
\chi_2{\hat F}_i^{jk,\alpha\beta}
(\partial_\alpha{\bar Z}^{a'}S^{d'}u_j)
(\partial_\beta{\bar Z}^{a''}S^{d''}u_k)
\|_{L^2({\mathbb R}^3)}\\
&
\leq
C\sum_{j,k,l}
\bigl(
\|
\chi_2
(T_l{\bar Z}^{a'}S^{d'}u_j)
(\partial{\bar Z}^{a''}S^{d''}u_k)
\|_{L^2({\mathbb R}^3)}\nonumber\\
&
\hspace{2cm}
+
\|
\chi_2
(\partial{\bar Z}^{a'}S^{d'}u_j)
(T_l{\bar Z}^{a''}S^{d''}u_k)
\|_{L^2({\mathbb R}^3)}
\bigr).\nonumber
\end{align}
Due to symmetry, 
we may suppose $d'=1$ and $d''=0$. 
When $|a'|=0$ and $|a''|\leq 2$ or 
$|a'|\leq 1$ and $|a''|\leq 1$, 
we employ 
the $L_r^2 L_\omega^4$-norm 
and the $L_r^\infty L_\omega^4$-norm. 
We get by (\ref{hyderivative}) with $\theta=(1/2)-\eta$
%%%%%%%%%%%%%%%%%%%%%%%%%%%%%%%%%%
\begin{equation}
\|
\chi_2
(T_l{\bar Z}^{a'}Su_j(t))
(\partial{\bar Z}^{a''}u_k(t))
\|_{L^2({\mathbb R}^3)}
\leq
C\langle t\rangle^{-1+\eta}
G(u(t))N_4(u(t)).
\end{equation}
Also, we get by (\ref{sob23}), (\ref{tv1})
\begin{equation}
\|
\chi_2
(\partial{\bar Z}^{a'}Su_j(t))
(T_l{\bar Z}^{a''}u_k(t))
\|_{L^2({\mathbb R}^3)}
\leq
C\langle t\rangle^{-3/2}
N_4(u(t))^2.
\end{equation}
When $|a'|\leq 2$ and $|a''|=0$, 
we have only to modify the argument just above and 
employ 
the $L^2({\mathbb R}^3)$-norm and the $L^\infty({\mathbb R}^3)$-norm. 
We have finished the estimate of $\chi_2 J_{i,2}$.
%%%%%%%%%%%%%%%%%%%%%%%%%%%%%5

\noindent{\bf$\cdot$Estimate of 
$\chi_2 J_{i,3}$, $\chi_2 J_{i,4}$, and  $\chi_2 J_{i,5}$.} 
Using (\ref{sob24}) and (\ref{sob25}), we obtain 
%%%%%%%%%%%%%%%%%%%%%%%%
\begin{equation}\label{1818}
\sum_{k=3}^5
\|
\chi_2 J_{i,k}
\|_{L^1({\mathbb R}^3)}
\leq
C\sum_{m=0}^4
\langle t\rangle^{-2}
\bigl(
N_4(u(t))^{2-(m/2)}M_3(u(t))^{m/2}
\bigr)
N_4(u(t))^2.
\end{equation}
The proof is direct and is therefore omitted. 

Now we are in a position to complete the proof of 
Proposition \ref{ghostenergyestimate}. 
We first note that 
the function $g=g(\rho)$ 
$(\rho\in{\mathbb R})$ 
is bounded (see (\ref{howg})), 
and hence there exists a positive constant $c$ such that 
$c\leq e^g\leq c^{-1}$ 
$(\rho\in{\mathbb R})$. 
We also note that 
$g'$ is a negative function, 
and it therefore follows from 
(\ref{dec2}), (\ref{1817})--(\ref{1818}) that 
for $i=1,\dots,N$ and $|a|=2$, 
$$
E_1({\bar Z}^a S u_i(t))
+
\sum_{j=1}^3
\int_0^t
\|
\langle \tau-r\rangle^{-(1/2)-\eta}
T_j{\bar Z}^a S u_i(\tau)
\|_{L^2({\mathbb R}^3)}^2
d\tau
$$
is estimated from above 
by the right-hand side of (\ref{genergyest}). 
(Strictly speaking, the term $CD(f,g)^6$ there plays no role at this moment.) 
We should mention how to 
estimate 
$
\|
\langle\tau-r\rangle^{-(1/2)-\eta}
T_j\partial_t{\bar Z}^a S u_i
\|_{L^2((0,t)\times{\mathbb R}^3)}
$ 
for $|a|=1$. 
In (\ref{dec1})--(\ref{dec8}), 
we replace 
${\bar Z}^aS$ $(|a|=2)$ 
with $\partial_t{\bar Z}^aS$ 
$(|a|=1)$, 
accordingly modifying 
${\bar Z}^{a'}S^{d'}$, 
${\bar Z}^{a''}S^{d''}$ suitably. 
Also, at the last term on the left-hand side of (\ref{dec1}) 
and in (\ref{july6}), 
we replace ${\bar Z}^a$ $(|a|=2)$ with 
$\partial_t {\bar Z}^a$ 
$(|a|=1)$. 
Though we then 
encounter such a little troublesome terms 
as 
$\partial_t^3{\bar Z}^a S^d u_i$ 
$(|a|+d\leq 1)$, 
we can rely upon (\ref{lem3125}) and (\ref{lem3145}) 
to handle such terms 
in the same way as we have done above. 
Also, note that the term $CD(f,g)^3$ naturally comes up 
from $\|\partial_t^2{\bar Z}^a S u_i(0)\|_{L^2({\mathbb R}^3)}$ 
$(|a|=1)$, 
and this is the reason why we need $CD(f,g)^6$ on the right-hand side of 
(\ref{genergyest}). 
Finally, we also mention that another troublesome term 
$T_j\partial_t^2{\bar Z}^a S^d u_i$ 
$(|a|+d\leq 1)$ comes up 
when we rely upon Lemma \ref{lemmanullt}. 
We can get over this difficulty by employing 
(\ref{lem315}) and (\ref{lem316}). 
We have finished the estimate of 
\begin{align*}
\sum_{|a|=2}
E_1({\bar Z}^a S u_i(t))
&+
\sum_{j=1}^3
\sum_{|a|=2}
\int_0^t
\|
\langle \tau-r\rangle^{-(1/2)-\eta}
T_j{\bar Z}^a S u_i(\tau)
\|_{L^2({\mathbb R}^3)}^2
d\tau\\
&
+
\sum_{j=1}^3
\sum_{|a|=1}
\int_0^t
\|
\langle \tau-r\rangle^{-(1/2)-\eta}
T_j\partial_t{\bar Z}^a S u_i(\tau)
\|_{L^2({\mathbb R}^3)}^2
d\tau.
\end{align*}
The other terms appearing on the left-hand side of (\ref{genergyest}) 
can be estimated in a similar way, 
and we have therefore completed the proof of Proposition \ref{ghostenergyestimate}. 
\end{proof}
%%%%%%%%%%%%%%%%%%%%%%%%%%%%%
%%%%%%%%%%%%%%%%%%%%%%%%%%
%%%%%%%%%%%%%%%%%%%%%%%%%%%
\section{Bound for $M_3(u(t))$}\label{sectionconformal}
The main purpose of this section is to prove\,:
%%%%%%%%%%%%%%%%%%
\begin{proposition}\label{propositionm3}
Suppose that initial data $(\ref{data1})$ is smooth and compactly supported, 
and suppose that the local solution $u$ satisfies $(\ref{lem311})$ in 
some interval $(0,T)$. 
Then the following inequality holds for all $t\in (0,T):$ 
%%%%%%%%%%%%%%%%%%%%%%%%%%
\begin{align}\label{m3conf313}
M_3(u(t))
\leq&
C
\sum_{|a|\leq 2}
\bigl(
\|
{\bar Z}^a f
\|_{L^2({\mathbb R}^3)}
+
\|
|x|
\partial_x {\bar Z}^a f
\|_{L^2({\mathbb R}^3)}
+
\|
|x|
{\bar Z}^a g
\|_{L^2({\mathbb R}^3)}
\bigr)\\
&
+
C\int_0^t
\langle\tau\rangle^{-1}
\bigl(
M_3(u(\tau)
+
N_3(u(\tau))
\bigr)
N_4(u(\tau))
d\tau
\nonumber\\
&
+
C\int_0^t
\langle\tau\rangle^{-3/2}
M_3(u(\tau))
N_2(u(\tau))^3
d\tau\nonumber\\
&
+
C\int_0^t
\langle\tau\rangle^{-1}
\bigl(
N_3(u(\tau))^2
+
X_2(u(\tau))^2
\bigr)
N_4(u(\tau))d\tau\nonumber\\
&
+
C\int_0^t
\langle\tau\rangle^{-2}
\bigl(
M_3(u(\tau))^2
+
N_3(u(\tau))^2
\bigr)
N_4(u(\tau))
d\tau\nonumber\\
&
+
C\int_0^t
\langle\tau\rangle^{-2}
M_3(u(\tau))^3
d\tau
+
C\int_0^t
\langle\tau\rangle^{-1}
X_2(u(\tau))^3d\tau.\nonumber
\end{align}
\end{proposition}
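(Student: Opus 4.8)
The plan is to derive (\ref{m3conf313}) by applying the conformal energy estimate (\ref{confest}) to each function ${\bar Z}^a u_i$ with $|a|\le 2$, $i=1,\dots,N$. Since $\partial_\alpha$ and $\Omega_{jk}$ commute with $\Box$ (Lemma \ref{lemmacomm}), acting with ${\bar Z}^a$ on (\ref{eq1}) gives $\Box{\bar Z}^a u_i=-{\bar Z}^a(F_i+C_i)$, with $F_i$, $C_i$ as in (\ref{nlt1})--(\ref{nlt2}); and since ${\bar Z}^a$ involves only the spatial operators $\partial_1,\partial_2,\partial_3,\Omega_{jk}$, its data at $t=0$ are ${\bar Z}^a f_i$ and ${\bar Z}^a g_i$. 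Summing (\ref{confest}) over $i$ and $|a|\le 2$, and using (\ref{conf2}) together with the equivalence of $Q$ and ${\tilde Q}$ to bound $M_3(u(t))$ by the sum of the left-hand sides of (\ref{confest}), we obtain
\[
M_3(u(t))\le C\sum_{|a|\le 2}\Bigl(\|{\bar Z}^a f\|_{L^2}+\||x|\partial_x{\bar Z}^a f\|_{L^2}+\||x|{\bar Z}^a g\|_{L^2}\Bigr)+C\sum_{i,\,|a|\le 2}\int_0^t\bigl\|(\tau+|x|)\,{\bar Z}^a(F_i+C_i)(\tau)\bigr\|_{L^2}\,d\tau .
\]
The first sum is the first line of (\ref{m3conf313}); the task is to estimate the space--time integral of the source, which, exactly as in Section \ref{sect3}, I would do by treating $\{|x|<(1+\tau)/2\}$ and its complement separately via cut-offs $\chi_1$, $\chi_2$.

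For the quadratic part ${\bar Z}^a F_i$ I would first apply Lemma \ref{nullpreserved}, iterated $|a|$ times, to write ${\bar Z}^a F_i$ as a sum of null forms $F_i^{j,\alpha\beta\gamma}(\partial_\gamma{\bar Z}^{a'}u_j)(\partial_{\alpha\beta}^2{\bar Z}^{a''}u_i)$ and $F_i^{jk,\alpha\beta}(\partial_\alpha{\bar Z}^{a'}u_j)(\partial_\beta{\bar Z}^{a''}u_k)$ with $|a'|+|a''|\le 2$ and new coefficients still satisfying the null condition, and then use the pointwise null-form bounds (\ref{null23})--(\ref{null26}) of Lemma \ref{lemmanullt}. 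On the exterior region ($\chi_2$), where $\tau+|x|\sim|x|\sim\tau$, the conformal weight is absorbed by the $\tau^{-1}$ (or $|x|^{-1}$) gain of (\ref{tv2}) and (\ref{tv1}) acting on the $T$-factor of each null form; placing the lowest-order factor in $L^\infty$ by the trace inequalities (\ref{sob23})--(\ref{sob25}) and the weighted inequality (\ref{hy}) with $\theta=(1/2)-\eta$, and the highest-order factor in $L^2$, yields the quadratic contributions of the type $\langle\tau\rangle^{-1}(M_3+N_3)N_4$, $\langle\tau\rangle^{-3/2}M_3 N_2^3$, and $\langle\tau\rangle^{-2}(M_3^2+N_3^2)N_4$. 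On the interior region ($\chi_1$), $\langle\tau-r\rangle\sim\langle\tau\rangle$, so factors of $\langle\tau-r\rangle^{-1}$ supply the time decay and the $\langle t-r\rangle$-weighted Sobolev inequalities (\ref{sob21})--(\ref{sob22}) do the work; before doing this, the second time derivatives $\partial_{\alpha\beta}^2{\bar Z}^{a''}u_i$, which are not covered by the energy norms, must be reduced to spatial derivatives plus lower-order products via (\ref{lem312})--(\ref{lem314}) of Lemma \ref{pointwise}.

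For the cubic part ${\bar Z}^a C_i={\bar Z}^a\bigl(G_i^{\alpha\beta}(u,\partial u)\partial_{\alpha\beta}^2 u_i\bigr)+{\bar Z}^a H_i(u,\partial u)$ there is no null structure, and one estimates directly, again region by region and reducing $\partial_t^2$ by Lemma \ref{pointwise}. The decisive case is the term in which all of ${\bar Z}^a$ ($|a|=2$) falls on $\partial_{\alpha\beta}^2 u_i$ and $G_i^{\alpha\beta}(u,\partial u)$ survives undifferentiated as a product $u_j u_k$: here one of the two $u$-factors is put in $L^\infty$ with its decay, while the other $u$-factor, together with the two $\Omega$-derivatives that have to be routed onto it, is measured in the norm $X_2$ by means of (\ref{sob25}). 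This is precisely where $X_2$ and not $M_2$ must appear, and it produces the terms $\langle\tau\rangle^{-1}X_2^2 N_4$ and $\langle\tau\rangle^{-1}X_2^3$ of (\ref{m3conf313}). The remaining cubic terms — where the derivatives are spread over several factors, or hit $G_i^{\alpha\beta}$, or come from ${\bar Z}^a H_i$ — give $\langle\tau\rangle^{-2}M_3^3$ and the $N_3^2$-part of $\langle\tau\rangle^{-1}(N_3^2+X_2^2)N_4$.

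I expect the main obstacle to be the bookkeeping forced by the constraint $|c|+d\le 1$ in the definitions (\ref{n4}), (\ref{mj+1}): because ${\bar Z}^a$ must contain no $S$ or $L_j$, neither the $L^1$--$L^\infty$ estimate (\ref{hor1}) nor the Klainerman inequality (\ref{kl1}) (which involve $|c|+d=2$) is at our disposal, so the conformal weight $\tau+|x|$ has to be produced entirely through the splitting into $\{|x|<(1+\tau)/2\}$ and its complement; on the exterior this leaves essentially no slack and forces the use of the endpoint $\theta=(1/2)-\eta$ of (\ref{hy}) together with the sharp $\tau^{-1}$ decay of (\ref{tv2}). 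The second delicate point is to make sure that, in the worst cubic term above, the $\Omega$-derivatives are charged to $X_2$ rather than $M_2$: as emphasized in the remarks around (\ref{mj+1}), replacing $X_2(u(\tau))$ by $M_2(u(\tau))$ on the right-hand side of (\ref{m3conf313}) would prevent the a priori estimates from closing.
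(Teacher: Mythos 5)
Your proposal follows essentially the same route as the paper: apply the conformal energy estimate (\ref{confest}) to ${\bar Z}^a u_i$ (using that ${\bar Z}^a$ commutes with $\Box$ and acts only on the data), control $M_3$ via the equivalence of $Q$ and ${\tilde Q}$, and then bound $\|(\tau+|x|){\bar Z}^a(F_i+C_i)\|_{L^2}$ by splitting into $\{|x|<(1+\tau)/2\}$ and its complement, using Lemma \ref{nullpreserved}, the null-form bounds with (\ref{tv1})--(\ref{tv2}), the reduction of $\partial_t^2$ via Lemma \ref{pointwise}, and the trace inequalities (\ref{sob24})--(\ref{sob25}) to produce the $X_2$ terms on the exterior region. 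The only cosmetic differences are that the paper additionally separates $t<3$ from $t>3$ on the interior region (a convenience for invoking $t-r\geq t/4+1/4$) and works with (\ref{sob23})--(\ref{sob25}) rather than the $\theta=(1/2)-\eta$ endpoint of (\ref{hy}) there; neither affects the substance of your argument.
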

%%%%%%%%%%%%%%%%%%%%%%%%%%%
In view of the conformal energy estimate (\ref{confest}), 
it amounts to bounding
\begin{align}
&
\|
(t+|x|)
{\bar Z}^a F_i^{j,\alpha\beta\gamma}
(\partial_\gamma u_j)
(\partial_{\alpha\beta}^2 u_i)
\|_{L^2({\mathbb R}^3)},\label{conf41}\\
&
\|
(t+|x|)
{\bar Z}^a F_i^{jk,\alpha\beta}
(\partial_\alpha u_j)
(\partial_\beta u_k)
\|_{L^2({\mathbb R}^3)},\label{conf42}\\
&
\|
(t+|x|)
{\bar Z}^a
\bigl(
G_i^{\alpha\beta}(u,\partial u)
\partial_{\alpha\beta}^2 u_i
\bigr)
\|_{L^2({\mathbb R}^3)},\label{conf43}\\
&
\|
(t+|x|)
{\bar Z}^a
H_i(u,\partial u) 
\|_{L^2({\mathbb R}^3)}\label{conf44}
\end{align}
(see (\ref{eq1}), (\ref{nlt1}), (\ref{nlt2})) for $|a|\leq 2$. 
As in the previous section, 
we treat them by 
considering the $L^2$ norm 
over the set $\{x\in{\mathbb R}^3\,:\,|x|<(1+t)/2\}$ 
and its complement set, separately. 
Furthermore, 
when considering the $L^2$ norm over the former set, 
we deal with the case $t<3$ and $t>3$, separately. 

%%%%%%%%%%%%%%%%%%%%%%%%%%%%%%%%%%
\noindent{\bf $\cdot$ $L^2$ norm over the set 
$\{x\in{\mathbb R}^3\,:\,|x|<(1+t)/2\}$ with $t<3$.} 
Obviously, it suffices to discuss how to bound 
the $L^2({\mathbb R}^3)$ norm of 
%\begin{equation}
%\|
$\partial_x^a F_i^{j,\alpha\beta\gamma}
(\partial_\gamma u_j)
(\partial_{\alpha\beta}^2 u_i)$, 
%\|_{L^2({\mathbb R}^3)},\,\,
%%%%%%%%%%%%%%%%%%%%%%%%%%%%%
%\|
$\partial_x^a F_i^{jk,\alpha\beta}
(\partial_\alpha u_j)
(\partial_\beta u_k)$, 
%\|_{L^2({\mathbb R}^3)},\,\,
%%%%%%%%%%%%%%%%%%%%%%%%%%%%
%\|
$\partial_x^a
\bigl(
G_i^{\alpha\beta}(u,\partial u)
\partial_{\alpha\beta}^2 u_i
\bigr)$, 
%\|_{L^2({\mathbb R}^3)},\,\,
%%%%%%%%%%%%%%%%%%%%%%%%%%
%\|
$\partial_x^a
H_i(u,\partial u)$ 
%\|_{L^2({\mathbb R}^3)}
%\end{equation}
for $|a|\leq 2$. 
Using (\ref{lem311}), (\ref{lem312}), (\ref{lem313}), and 
(\ref{lem314}), we easily get 
\begin{align}
&\sum_{|a|\leq 2}
\biggl(
\|
\partial_x^a F_i^{j,\alpha\beta\gamma}
(\partial_\gamma u_j)
(\partial_{\alpha\beta}^2 u_i)
\|_{L^2({\mathbb R}^3)}
+
\|
\partial_x^a F_i^{jk,\alpha\beta}
(\partial_\alpha u_j)
(\partial_\beta u_k)
\|_{L^2({\mathbb R}^3)}
\biggr)\\
&
\hspace{0.2cm}
\leq
C
\bigl(
M_1(u(t))
+
N_3(u(t))
\bigr)
N_4(u(t)).\nonumber
\end{align}
Moreover, 
using not only (\ref{lem311}), (\ref{lem312}), (\ref{lem313}), and 
(\ref{lem314}) 
but also 
the H\"older inequality 
$\|uvw\|_{L^2({\mathbb R}^3)}
\leq
\|u\|_{L^6({\mathbb R}^3)}
\|v\|_{L^6({\mathbb R}^3)}
\|w\|_{L^6({\mathbb R}^3)}$ 
together with the Sobolev-type inequality 
$\|v\|_{L^6({\mathbb R}^3)}
\leq
C\|\nabla v\|_{L^2({\mathbb R}^3)}$, 
we easily get
\begin{align}
&
\sum_{|a|\leq 2}
\biggl(
\|
\partial_x^a
\bigl(
G_i^{\alpha\beta}(u,\partial u)
\partial_{\alpha\beta}^2 u_i
\bigr)
\|_{L^2({\mathbb R}^3)}
%%%%%%%%%%%%%%%%%%%%%%%%%%
+
\|
\partial_x^a
H_i(u,\partial u)
\|_{L^2({\mathbb R}^3)}
\biggr)\\
&
\hspace{0.2cm}
\leq
C
\bigl(
M_1(u(t))
+
N_3(u(t))
\bigr)^2
N_4(u(t)).\nonumber
\end{align}
Next let us bound 
the $L^2$ norm of (\ref{conf41})--(\ref{conf44}) 
over the set $\{x\in{\mathbb R}^3\,:\,|x|<(1+t)/2\}$ with $t>3$ 
and the one $\{x\in{\mathbb R}^3\,:\,|x|>(1+t)/2\}$ with $t>0$. 

\noindent{\bf $\cdot$ Estimate of (\ref{conf41}).} 
On account of (\ref{null21}), 
we need to estimate 
$$
(t+|x|){\hat F}_i^{j,\alpha\beta\gamma}
(\partial_\gamma{\bar Z}^{a'}u_j)
(\partial_{\alpha\beta}^2{\bar Z}^{a''}u_i)
$$ 
$(|a'|+|a''|\leq 2)$, 
where the new coefficients ${\hat F}_i^{j,\alpha\beta\gamma}$, 
which in fact may depend on $a'$ and $a''$, 
satisfy the null condition. 
We first note that due to the null condition, 
(\ref{null23}), and (\ref{tv1})--(\ref{tv2})
we have
\begin{align}\label{march45}
(t&+|x|)|{\hat F}_i^{j,\alpha\beta\gamma}
(\partial_\gamma{\bar Z}^{a'}u_j)
(\partial_{\alpha\beta}^2{\bar Z}^{a''}u_i)|\\
&
\leq
C
\biggl(
\sum_{{|b|+|c|}\atop{+d=1}}
|\Omega^b L^c S^d{\bar Z}^{a'}u|
\biggr)
|\partial^2{\bar Z}^{a''}u_i|\nonumber\\
&
\hspace{0.2cm}
+
C|\partial{\bar Z}^{a'}u|
\biggl(
\sum_{{|b|+|c|}\atop{+d=1}}
|\Omega^b L^c S^d\partial{\bar Z}^{a''}u_i|
\biggr)
=:P_1+P_2.\nonumber
\end{align}
We carry out the estimate of $P_1$, $P_2$ 
over the set $\{x\in{\mathbb R}^3\,:\,|x|<(1+t)/2\}$ 
with $t>3$ and 
the one $\{x\in{\mathbb R}^3\,:\,|x|>(1+t)/2\}$ with $t>0$, 
separately. 

When $|a''|=0$ (and hence $|a'|\leq 2$), 
we get by (\ref{ddd}) and the Sobolev embedding 
%%%%%%%%%%%%%%%%%%%%%%%%%%%%%%%%%%%%%%%%%%%
\begin{align}\label{march46}
\|\chi_1 P_1\|_{L^2({\mathbb R}^3)}
&
\leq
\langle t\rangle^{-1}M_3(u(t))
\|
|t-r|\partial (\partial u(t))
\|_{L^\infty({\mathbb R}^3)}\\
&
\leq
C\langle t\rangle^{-1}
M_3(u(t))
\sum_{|b|+|c|+d=1}
\|
\Omega^b L^c S^d(\partial u(t))
\|_{L^\infty({\mathbb R}^3)}\nonumber\\
&
\leq
C\langle t\rangle^{-1}
M_3(u(t))N_4(u(t)).\nonumber
\end{align}
%%%%%%%%%%%%%%%%%%%%%%%%%%%%%%
%%%%%%%%%%%%%%%%%%%%%%%%%%%%%%
Here we have used the inequality 
\begin{equation}\label{2019shingengo111}
t-r\geq t-\frac{1+t}{2}
=
\frac{t}{4}
+
\frac{t}{4}
-\frac12
\geq
\frac{t}{4}
+
\frac14,
\end{equation}
which holds on the set 
$\{x\in{\mathbb R}^3\,:\,|x|<(1+t)/2\}$ 
with $t>3$. 
%%%%%%%%%%%%%%%%%%%%%%%%%%%
When $|a''|=1$ (and hence $|a'|\leq 1$) 
or $|a''|=2$ (and hence $|a'|=0$), 
we get by (\ref{2019shingengo111}), 
(\ref{ddd}), and the Sobolev embedding 
%%%%%%%%%%%%%%%%%%%%%%%%%%%%%%%%%%%%%%%%%%%%%%%
\begin{align}\label{march47}
\|&\chi_1 P_1\|_{L^2({\mathbb R}^3)}\\
&
\leq
C
\langle t\rangle^{-1}
\biggl(
\sum_{{|b|+|c|}\atop{+d=1}}
\|
\Omega^b L^c S^d {\bar Z}^{a'} u(t)
\|_{L^\infty({\mathbb R}^3)}
\biggr)
\|
|t-r|\partial(\partial{\bar Z}^{a''}u(t))
\|_{L^2({\mathbb R}^3)}\nonumber\\
&
\leq
C\langle t\rangle^{-1}
\bigl(
M_2(u(t))
+
N_3(u(t))
\bigr)N_4(u(t)).\nonumber
\end{align}
%%%%%%%%%%%%%%%%%%%%%%%%%%%%%%%%%%%%%%%%%%%%
As for $P_2$, 
we use (\ref{3ineq2}) to get 
for $|a''|=0$ (and hence $|a'|\leq 2$)
\begin{align}\label{reiwa1234}
\|&
\chi_1 P_2
\|_{L^2({\mathbb R}^3)}\\
&
\leq
C\langle t\rangle^{-1}
\|
\langle t-r\rangle
\partial{\bar Z}^{a'} u(t)
\|_{L^6({\mathbb R}^3)}
\biggl(
\sum_{{|b|+|c|}\atop{+d=1}}
\|
\Omega^b L^c S^d \partial u_i (t)
\|_{L^3({\mathbb R}^3)}
\biggr)\nonumber\\
&
\leq
C\langle t\rangle^{-1}
N_3(u(t))N_4(u(t)).\nonumber
\end{align}
%%%%%%%%%%%%%%%%%%%%%%%%%%%%%%%%%%%%%%%%%
For $|a''|=1$ 
(and hence $|a'|\leq 1$) or 
$|a''|=2$ (and hence $|a'|=0$), 
%%%%%%%%%%%%%%%%%%%%%%%%%%%%%%%%%%%
we apply (\ref{3ineq4}) to 
$\|
\langle t-r\rangle
\partial {\bar Z}^{a'} u(t)
\|_{L^\infty({\mathbb R}^3)}$ 
and obtain the same bound for $\|\chi_1 P_2\|_{L^2({\mathbb R}^3)}$ 
as in (\ref{reiwa1234}). 
%%%%%%%%%%%%%%%%%%%%%%%%%%%%%%%%%%%%%%%%

Turning our attention to the estimate of 
$\chi_2 P_1$ and $\chi_2 P_2$, 
we get by (\ref{lem312}) and (\ref{sob23}), 
for $|a''|=0$ (and hence $|a'|\leq 2$) 
%%%%%%%%%%%%%%%%%%%%%%%%%%%%%%%%%%%%%%%%%%
\begin{align}\label{march410}
\|
\chi_2 P_1
\|_{L^2({\mathbb R}^3)}
&
\leq
C\langle t\rangle^{-1}
M_3(u(t))
\sum_{|a|\leq 1}
\|
r\partial\partial_x^a u(t)
\|_{L^\infty({\mathbb R}^3)}\\
&
\hspace{0.1cm}
+
C\langle t\rangle^{-3/2}
M_3(u(t))
\|
r^{1/2}u(t)
\|_{L^\infty({\mathbb R}^3)}^3
\nonumber\\
&
\leq
C\langle t\rangle^{-1}
M_3(u(t))N_4(u(t))
+
C\langle t\rangle^{-3/2}
M_3(u(t))
N_2(u(t))^3.\nonumber
\end{align}
%%%%%%%%%%%%%%%%%%%%%%%%%%%%%%%%%%%
For $|a''|=1$ 
(and hence $|a'|\leq 1$), 
we get by 
(\ref{lem313}) and (\ref{sob23})
%%%%%%%%%%%%%%%%%%%%%%%%%%%%%%%%%%%%%%
\begin{align}\label{march111}
\|&
\chi_2 P_1
\|_{L^2({\mathbb R}^3)}\\
&
\leq
C\langle t\rangle^{-1}
\biggl(
\sum_{{|b|+|c|}\atop{+d=1}}
\|
r\Omega^b L^c S^d {\bar Z}^{a'}u(t)
\|_{L_r^\infty L_\omega^4}
\biggr)
\biggl(
\sum_{|a|\leq 2}
\|
\partial{\bar Z}^a u(t)
\|_{L^2_r L^4_\omega}
\biggr)\nonumber\\
&
\hspace{0.2cm}
+
C
\langle t\rangle^{-3/2}
\biggl(
\sum_{{|b|+|c|}\atop{+d=1}}
\|
\Omega^b L^c S^d {\bar Z}^{a'}u(t)
\|_{L_r^2 L_\omega^4}
\biggr)
\|
r^{1/2}
u(t)
\|_{L^\infty({\mathbb R}^3)}^2\nonumber\\
&
\hspace{2cm}
\times
\biggl(
\sum_{|a|\leq 1}
\|
r^{1/2}{\bar Z}^a u(t)
\|_{L^\infty_r L^4_\omega}
\biggr)\nonumber\\
&
\leq
C\langle t\rangle^{-1}
\bigl(
N_3(u(t))^{1/2}
M_3(u(t))^{1/2}
\bigr)
N_4(u(t))
+
C\langle t\rangle^{-3/2}
M_3(u(t))N_2(u(t))^3.\nonumber
\end{align}
%%%%%%%%%%%%%%%%%%%%%%%%%%%%%%%%%%%
For $|a''|=2$ (and hence $|a'|=0$), 
we employ (\ref{lem314}) and modify 
the argument above to get 
the same estimate as in (\ref{march111}). 
%%%%%%%%%%%%%%%%%%%%%%%%%%%%%%
As for $\chi_2 P_2$, it is easy to get
\begin{equation}
\|
\chi_2 P_2
\|_{L^2({\mathbb R}^3)}
\leq
C\langle t\rangle^{-1}
N_3(u(t))N_4(u(t)).
\end{equation}
%%%%%%%%%%%%%%%%%%%%%%%%%%%%%%%%%
\noindent{\bf$\cdot$Estimate of (\ref{conf42}).} 
In view of (\ref{null22}), 
we need to deal with
$$
(t+|x|)
{\hat F}_i^{jk,\alpha\beta}
(\partial_\alpha{\bar Z}^{a'}u_j)
(\partial_\beta{\bar Z}^{a''}u_k),
\quad
|a'|+|a''|\leq 2,
$$
where the new coefficients 
${\hat F}_i^{jk,\alpha\beta}$, 
which may depend on $a'$ and $a''$, 
satisfy the null condition.  
We then have, as in (\ref{march45})
\begin{align}\label{march7413}
(&t+|x|)
|{\hat F}_i^{jk,\alpha\beta}
(\partial_\alpha{\bar Z}^{a'}u_j)
(\partial_\beta{\bar Z}^{a''}u_k)|\\
&
\leq
\biggl(
\sum_{{|b|+|c|}\atop{+d=1}}
|\Omega^b L^c S^d {\bar Z}^{a'}u|
\biggr)
|\partial{\bar Z}^{a''}u|
+
C|\partial{\bar Z}^{a'}u|
\biggl(
\sum_{{|b|+|c|}\atop{+d=1}}
|\Omega^b L^c S^d {\bar Z}^{a''}u|
\biggr)\nonumber\\
&
=:
P_3+P_4.\nonumber
\end{align}
By symmetry, we have only to deal with $P_3$. 
When $|a''|=0$ (and hence $|a'|\leq 2$) 
or $|a''|=1$ (and hence $|a'|\leq 1$), 
we get by (\ref{3ineq4})
%%%%%%%%%%%%%%%%%%%%%%%%%%%%%%
\begin{align}
\|
\chi_1 P_3
\|_{L^2({\mathbb R}^3)}
&
\leq
C
\langle t\rangle^{-1}
M_3(u(t))
\|
\langle t-r\rangle\partial{\bar Z}^{a''}u(t)
\|_{L^\infty({\mathbb R}^3)}\\
&
\leq
C
\langle t\rangle^{-1}
M_3(u(t))N_4(u(t)).\nonumber
\end{align}
%%%%%%%%%%%%%%%%%%%%%%%%%%%%%%%%%%%%%%%%%%%%%%
When $|a''|=2$ (and hence $|a'|=0$), 
we employ (\ref{3ineq2}) and obtain 
%%%%%%%%%%%%%%%%%%%%%%%%%%%%%%%%%
\begin{align}
\|
\chi_1 P_3
\|_{L^2({\mathbb R}^3)}
&
\leq
C\langle t\rangle^{-1}
\biggl(
\sum_{{|b|+|c|}\atop{+d=1}}
\|
\Omega^b L^c S^d u(t)
\|_{L^3({\mathbb R}^3)}
\biggr)
\|
\langle t-r \rangle
\partial{\bar Z}^{a''}u(t)
\|_{L^6({\mathbb R}^3)}\\
&
\leq
C
\langle t\rangle^{-1}
M_3(u(t))N_4(u(t)).\nonumber
\end{align}
%%%%%%%%%%%%%%%%%%%%%%%%%%%%%%%%%%%%
As for $\chi_2 P_3$, 
we get in a way similar to (\ref{march410})--(\ref{march111})
\begin{align}
\|
\chi_2 P_3
\|_{L^2({\mathbb R}^3)}
\leq&
C\langle t\rangle^{-1}
M_3(u(t))
N_3(u(t))\\
&
+
C\langle t\rangle^{-1}
\bigl(
N_3(u(t))^{1/2}
M_3(u(t))^{1/2}
\bigr)
N_3(u(t)).\nonumber
\end{align}
%%%%%%%%%%%%%%%%%%%%%%%%%%%%%%%%%

\noindent{\bf$\cdot$ Estimate of (\ref{conf43}).} 
We can obtain for $|a|\leq 2$, $i=1,\dots,N$
\begin{align}
\|&
\chi_1
(t+|x|)
{\bar Z}^a
\bigl(
G_i^{\alpha\beta}(u,\partial u)\partial_{\alpha\beta}^2 u_i(t)
\bigr)
\|_{L^2({\mathbb R}^3)}\label{march3417}\\
&
\leq
C
\langle t\rangle^{-2}
\bigl(
M_3(u(t))
+
N_3(u(t))
\bigr)^2
N_4(u(t)),\nonumber\\
%%%%%%%%%%%%%%%%%%%%%%%%
\|&
\chi_2
(t+|x|)
{\bar Z}^a
\bigl(
G_i^{\alpha\beta}(u,\partial u)\partial_{\alpha\beta}^2 u_i(t)
\bigr)
\|_{L^2({\mathbb R}^3)}\label{march3418}\\
&
\leq
C\langle t\rangle^{-1}
\bigl(
N_3(u(t))^2
+
X_2(u(t))^2
\bigr)
N_4(u(t)).\nonumber
\end{align}
For the proof of (\ref{march3417}), 
it suffices to explain 
how to handle such typical terms as 
$u_j({\bar Z}^a u_k)\partial^2 u_i$ 
and 
$u_ju_k{\bar Z}^a\partial^2 u_i$ for $|a|=2$ 
because the other terms can be treated 
in a similar way. 

Recall that we are assuming $t\geq 3$ 
when considering the estimate over the set 
$\{x\in{\mathbb R}^3\,:\,|x|<(1+t)/2\}$. 
Using (\ref{3ineq1}) and (\ref{ddd}), 
we obtain
\begin{align}
\|&
\chi_1(t+|x|)
u_j(t)
({\bar Z}^a u_k(t))
\partial^2 u_i(t)
\|_{L^2({\mathbb R}^3)}\\
&
\leq
C\langle t\rangle^{-2}
\|
\langle t-r\rangle u_j(t)
\|_{L^6({\mathbb R}^3)}
\|
\langle t-r\rangle{\bar Z}^a u_k(t)
\|_{L^6({\mathbb R}^3)}
\|
|t-r|\partial(\partial u_i(t))
\|_{L^6({\mathbb R}^3)}\nonumber\\
&
\leq
C\langle t\rangle^{-2}
\bigl(
M_3(u(t))
+
N_3(u(t))
\bigr)^2
N_3(u(t)).\nonumber
\end{align}
Also, using (\ref{3ineq3}) and (\ref{ddd}), 
we get 
\begin{align}
\|&
\chi_1
(t+|x|)
u_j(t)
u_k(t)
{\bar Z}^a\partial^2 u_i(t)
\|_{L^2({\mathbb R}^3)}\\
&
\leq
C\langle t\rangle^{-2}
\|
\langle t-r\rangle u(t)
\|_{L^\infty({\mathbb R}^3)}^2
\sum_{|b|\leq 2}
\|
|t-r|\partial
(\partial{\bar Z}^b u(t))
\|_{L^2({\mathbb R}^3)}\nonumber\\
&
\leq
C\langle t\rangle^{-2}
\bigl(
M_1(u(t))
+
N_2(u(t))
\bigr)^2
N_4(u(t)).\nonumber
\end{align}
%%%%%%%%%%%%%%%%%%%%%%%%%%%%
For the proof of (\ref{march3418}), 
it suffices to explain how to 
treat $({\bar Z}^{a'}u_j)({\bar Z}^{a''}u_k)\partial^2 u_i$ 
with $|a'|=|a''|=1$ and 
$u_j({\bar Z}^a u_k)\partial^2 u_i$ 
with $|a|=2$; 
the other terms can be handled 
in a similar manner. 
Using (\ref{sob24}) and (\ref{lem311})--(\ref{lem312}), 
we get
\begin{align}
\|&
\chi_2
(t+|x|)
({\bar Z}^{a'}u_j(t))
({\bar Z}^{a''}u_k(t))
\partial^2 u_i(t)
\|_{L^2({\mathbb R}^3)}\\
&
\leq
C\langle t\rangle^{-1}
\|
r{\bar Z}^{a'}u_j(t)
\|_{L_r^\infty L_\omega^4}
\|
r{\bar Z}^{a''}u_k(t)
\|_{L_r^\infty L_\omega^4}
\|
\partial^2 u_i(t)
\|_{L_r^2 L_\omega^\infty}\nonumber\\
&
\leq
C\langle t\rangle^{-1}
\bigl(
N_2(u(t))^{1/2}
X_2(u(t))^{1/2}
\bigr)^2
\bigl(
N_4(u(t))
+
X_2(u(t))
\bigr)\nonumber\\
&
\leq
C\langle t\rangle^{-1}
\bigl(
N_2(u(t))^2
+
X_2(u(t))^2
\bigr)N_4(u(t)),\nonumber
\end{align}
where we have used the Young inequality. 
Also, using (\ref{sob25}), (\ref{lem311})--(\ref{lem312}), 
we obtain 
\begin{align}
\|&
\chi_2
(t+|x|)
u_j(t)
({\bar Z}^a u_k(t))
\partial^2 u_i(t)
\|_{L^2({\mathbb R}^3)}\\
&\leq
C\langle t\rangle^{-1}
\|
ru_j(t)
\|_{L^\infty({\mathbb R}^3)}
\|
{\bar Z}^a u_k(t)
\|_{L^2({\mathbb R}^3)}
\|
r\partial^2 u_i(t)
\|_{L^\infty({\mathbb R}^3)}\nonumber\\
&
\leq
C\langle t\rangle^{-1}
\bigl(
N_2(u(t))^{1/2}
X_2(u(t))^{1/2}
\bigr)
X_2(u(t))\nonumber\\
&
\hspace{1.5cm}
\times
\biggl(
\sum_{|b|\leq 1}
\|
r\partial\partial_x^b u(t)
\|_{L^\infty({\mathbb R}^3)}
+
\|
ru(t)
\|_{L^\infty({\mathbb R}^3)}
\biggr)\nonumber\\
&
\leq
C\langle t\rangle^{-1}
N_2(u(t))^{1/2}
X_2(u(t))^{3/2}
\bigl(
N_4(u(t))
+
N_2(u(t))^{1/2}
X_2(u(t))^{1/2}
\bigr)\nonumber\\
&
\leq
C\langle t\rangle^{-1}
\bigl(
N_2(u(t))^2
+
X_2(u(t))^2
\bigr)
N_4(u(t)),\nonumber
\end{align}
where we have used the Young inequality again. 
%%%%%%%%%%%%%%%%%%%%%%%%%%%%%

\noindent{\bf$\cdot$Estimate of (\ref{conf44}).} 
Repeating essentially the same argument as above, 
we can obtain for $|a|\leq 2$
\begin{align}
\|&
(t+|x|)
{\bar Z}^a 
H_i(u,\partial u)
\|_{L^2({\mathbb R}^3)}\\
&
\leq
C\langle t\rangle^{-2}
\bigl(
M_3(u(t))^3
+
N_3(u(t))^3
\bigr)
+
C\langle t\rangle^{-1}
\bigl(
N_3(u(t))^3
+
X_2(u(t))^3
\bigr).\nonumber
\end{align}
%%%%%%%%%%%%%%%%%%%
It is now obvious that (\ref{m3conf313}) 
is an immediate consequence of the estimates 
we have obtained above. 
The proof of Proposition \ref{propositionm3} 
has been finished.
%%%%%%%%%%%%%%%%%%%%%%%%%%%%%%
%%%%%%%%%%%%%%%%%%%%%%%%%%%%%%%
%%%%%%%%%%%%%%%%%%%%%%%%%%%%
\section{Bound for $X_2(u(t))$}\label{sectionx2norm}
The purpose of this section is to prove
%%%%%%%%%%%%%%%%%%%%%%%%%%%%%%%%%%%%%%%
\begin{proposition}\label{propositionx2}
Suppose that initial data $(\ref{data1})$ is smooth and compactly supported, 
and suppose that the local solution $u$ satisfies $(\ref{lem311})$ in 
some interval $(0,T)$. 
Then the following inequality holds for all $t\in (0,T):$ 
%%%%%%%%%%%%%%%%%%%%%%%%%%%%%%%%%%%%%
\begin{align}\label{2019estimatex2}
&X_2(u(t))\\
&
\hspace{0.1cm}
\leq
C
\biggl(
\sum_{|a|\leq 2}
\|{\bar Z}^a f\|_{L^2({\mathbb R}^3)}
+
\sum_{|a|\leq 2}
\||x|{\bar Z}^a g\|_{L^2({\mathbb R}^3)}
\biggr)\nonumber\\
&
\hspace{0.2cm}
+
C\int_0^t
\langle \tau\rangle^{-2}
\bigl(
M_3(u(\tau))+N_3(u(\tau))
\bigr)
\bigl(
M_3(u(\tau))+N_4(u(\tau))
\bigr)d\tau\nonumber\\
&
\hspace{0.2cm}
+
C\int_0^t
\langle \tau\rangle^{-2}
\bigl(
M_3(u(\tau))+N_3(u(\tau))
\bigr)^2
\bigl(
N_4(u(\tau))+X_2(u(\tau))
\bigr)d\tau\nonumber\\
&
\hspace{0.2cm}
+
C\int_0^t
\langle \tau\rangle^{-3/2}
\bigl(
M_3(u(\tau))+N_3(u(\tau))
\bigr)
\bigl(
N_4(u(\tau))+X_1(u(\tau))
\bigr)d\tau
\nonumber\\
&
\hspace{0.2cm}
+
C
\int_0^t
\langle \tau\rangle^{-3/2}
\bigl(
N_3(u(\tau))+X_2(u(\tau))
\bigr)^2
\bigl(
N_4(u(\tau))+X_2(u(\tau))
\bigr)d\tau.\nonumber
\end{align}
\end{proposition}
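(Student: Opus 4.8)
The plan is to apply the Li--Yu estimate (\ref{ly}) to each function $v=\Omega^b u_i$, $i=1,\dots,N$, $|b|\le 2$, and to square-sum the resulting bounds over $i$ and $b$. Since $[\Omega_{jk},\Box]=0$ (Lemma \ref{lemmacomm}), equation (\ref{eq1}) gives $\Box\Omega^b u_i=-\Omega^b\bigl(F_i(\partial u,\partial^2 u)+C_i(u,\partial u,\partial^2 u)\bigr)$, so (\ref{ly}) with data $(\Omega^b f_i,\Omega^b g_i)$ and source $-\Omega^b(F_i+C_i)$ bounds $\|\Omega^b u_i(t)\|_{L^2({\mathbb R}^3)}$ by a data term, an interior integral $\int_0^t\|\chi_1\Omega^b(F_i+C_i)(\tau)\|_{L^{6/5}({\mathbb R}^3)}\,d\tau$, and an exterior integral $\int_0^t\langle\tau\rangle^{-1/2}\|\chi_2\Omega^b(F_i+C_i)(\tau)\|_{L_r^1 L_\omega^{4/3}({\mathbb R}^3)}\,d\tau$. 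For the data term I would use $\|\Omega^b f_i\|_{L^2}\le\sum_{|a|\le 2}\|{\bar Z}^a f\|_{L^2}$ and the Hardy inequality $\||D|^{-1}h\|_{L^2({\mathbb R}^3)}\le C\||x|h\|_{L^2({\mathbb R}^3)}$, so that $\||D|^{-1}\Omega^b g_i\|_{L^2}\le C\sum_{|a|\le 2}\||x|{\bar Z}^a g\|_{L^2}$; this produces the first line of (\ref{2019estimatex2}).

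For the nonlinear contributions I would commute $\Omega^b$ into the products by Lemma \ref{lemmacomm} and keep the quadratic pieces in null form using Lemma \ref{nullpreserved}; every factor $\partial_t^2{\bar Z}^a u_i$ is first rewritten via (\ref{lem312})--(\ref{lem314}) in terms of spatial derivatives plus lower-order cubic remainders. The decisive point is that, by Lemma \ref{lemmanullt} (inequalities (\ref{null23}), (\ref{null24}), (\ref{null26})) followed by (\ref{tv2}), each differentiated null-form quadratic $\widehat F(\partial_\gamma{\bar Z}^{a'}u)(\partial_{\alpha\beta}^2{\bar Z}^{a''}u)$, with ${\bar Z}$ now running over rotations only and $|a'|+|a''|\le 2$, is pointwise $\le C\langle\tau\rangle^{-1}$ times a product in which the factor of lowest differential order occurs only through $\Omega$, $L$ or $S$; thus one already has one power of $\langle\tau\rangle^{-1}$, while the remaining factor of highest order is controlled in $L^2({\mathbb R}^3)$ by $N_4$ (e.g. $\|\partial^2\Omega^2 u\|_{L^2}\le CN_4(u)$) or, one order lower, by $N_3$, $M_3$.

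On the interior set $\{|x|<(1+\tau)/2\}$ one has $\langle\tau-r\rangle\ge\langle\tau\rangle/C$, cf. (\ref{2019shingengo111}), so an extra power of $\langle\tau\rangle^{-1}$ is gained from the Sobolev-type bounds (\ref{3ineq1})--(\ref{3ineq4}) applied to the low-order factor; together with H\"older in $L^{6/5}=L^3\cdot L^2$ this yields interior integrands bounded by $C\langle\tau\rangle^{-2}(M_3+N_3)(M_3+N_4)$, and the cubic terms, whose extra factor is estimated in $L^\infty(S^2)$ via $W^{1,4}(S^2)\hookrightarrow L^\infty(S^2)$ and (\ref{sob21}), give the $C\langle\tau\rangle^{-2}(M_3+N_3)^2(N_4+X_2)$ term, so that the second and third lines of (\ref{2019estimatex2}) are accounted for. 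On the exterior set $\{|x|>(1+\tau)/2\}$, where $r\ge(1+\tau)/2$, I would estimate the $L_r^1 L_\omega^{4/3}$ norm by H\"older on $S^2$ ($L_\omega^{4/3}=L_\omega^4\cdot L_\omega^2$), placing the $r^{1/2}$- or $r$-weight on the low-order factor through the trace inequalities (\ref{sob23})--(\ref{sob25}) and keeping the highest-order factor in $L^2({\mathbb R}^3)$; combining the $\langle\tau\rangle^{-1}$ from the null structure with the prefactor $\langle\tau\rangle^{-1/2}$ of (\ref{ly}) then gives exterior integrands bounded by $C\langle\tau\rangle^{-3/2}(M_3+N_3)(N_4+X_1)$ and $C\langle\tau\rangle^{-3/2}(N_3+X_2)^2(N_4+X_2)$. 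Adding all contributions over $i$ and $|b|\le 2$ yields (\ref{2019estimatex2}).

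The main obstacle is handling the top-order quadratic terms, in which a factor such as $\partial^2\Omega^2 u$ (four derivatives, controllable only in plain $L^2$) is paired with a very low-order factor: without the null structure these do not close, and even with it one must allocate the single unit of $T$-decay and the available $r$- and $\langle\tau-r\rangle$-weights so that the low-order factor is measured only in the rotation norm $X_1$ or $X_2$ and never in $M_2$ or $M_3$. It is exactly because $|b|\le 2$ (rather than $3$) never forces $\partial_x$, $L$ or $S$ onto that distinguished factor that $X_2$, and not $M_3$, appears on the right-hand side of (\ref{2019estimatex2}) --- the feature that, as noted after (\ref{xj}), eventually allows the whole system of a priori estimates to close.
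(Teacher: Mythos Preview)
Your proposal is correct and follows essentially the same approach as the paper: apply the Li--Yu estimate (\ref{ly}) to $\Omega^b u_i$, control the data term via the Hardy inequality $\||D|^{-1}h\|_{L^2}\le C\||x|h\|_{L^2}$, exploit the null structure through (\ref{null23})--(\ref{null26}) and (\ref{tv1})--(\ref{tv2}) to extract a factor $\langle\tau\rangle^{-1}$, and then split into the interior region (where (\ref{2019shingengo111}) and (\ref{ddd}) give a second power of $\langle\tau\rangle^{-1}$ via $L^{6/5}=L^3\cdot L^2$) and the exterior region (where the $\langle\tau\rangle^{-1/2}$ prefactor of (\ref{ly}) combines with trace inequalities (\ref{sob23})--(\ref{sob25}) on $L_r^1 L_\omega^{4/3}$). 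The paper organizes the null-form step through the pointwise decomposition (\ref{march45}), (\ref{march7413}) into $P_1,\dots,P_4$, which is exactly your $T$-derivative step unpacked, and also treats $t<3$ separately in the interior, a minor technicality you omit. One small inaccuracy in your final paragraph: $M_3$ \emph{does} appear on the right-hand side of (\ref{2019estimatex2}) (it carries the low-order factor $\Omega^b L^c S^d\bar Z^{a'}u$ coming from the null decomposition); the genuine point is rather that the undifferentiated factors $\Omega^b u$ arising in the cubic terms are bounded by $X_2$ because only rotations have been applied, which is what ultimately lets the estimates close.
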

%%%%%%%%%%%%%%%%%%%%%%%%%%%%%%%%
%%%%%%%%%%%%%%%%%%%%%%%%%%%%%%
In view of the Li-Yu estimate (\ref{ly}) 
together with the well-known inequality 
$\||D|^{-1}v\|_{L^2({\mathbb R}^n)}
\leq
C\||x|v\|_{L^2({\mathbb R}^n)}$ 
$(n\geq 3)$, 
the proof of this proposition amounts to 
showing decay estimates of the following norms for $|a|\leq 2:$
%%%%%%%%%%%%%%%%%%%%%%%%%%%
\begin{align}
&
\|
\chi_1
{\bar Z}^a F_i^{j,\alpha\beta\gamma}
(\partial_\gamma u_j)
(\partial_{\alpha\beta}^2 u_i)
\|_{L^{6/5}({\mathbb R}^3)},
\quad
\|
\chi_2
{\bar Z}^a F_i^{j,\alpha\beta\gamma}
(\partial_\gamma u_j)
(\partial_{\alpha\beta}^2 u_i)
\|_{L_r^1 L_\omega^{4/3}({\mathbb R}^3)},\label{ly136}\\
&
\|
\chi_1
{\bar Z}^a F_i^{jk,\alpha\beta}
(\partial_\alpha u_j)
(\partial_\beta u_k)
\|_{L^{6/5}({\mathbb R}^3)},
\quad
\|
\chi_2
{\bar Z}^a F_i^{jk,\alpha\beta}
(\partial_\alpha u_j)
(\partial_\beta u_k)
\|_{L_r^1 L_\omega^{4/3}({\mathbb R}^3)},\label{ly236}\\
&
\|
\chi_1
{\bar Z}^a
\bigl(
G_i^{\alpha\beta}(u,\partial u)
\partial_{\alpha\beta}^2 u_i
\bigr)
\|_{L^{6/5}({\mathbb R}^3)},
\quad
\|
\chi_2
{\bar Z}^a
\bigl(
G_i^{\alpha\beta}(u,\partial u)
\partial_{\alpha\beta}^2 u_i
\bigr)
\|_{L_r^1 L_\omega^{4/3}({\mathbb R}^3)},\label{ly336}\\
&
\|
\chi_1
{\bar Z}^a
H_i(u,\partial u) 
\|_{L^{6/5}({\mathbb R}^3)},
\quad
\|
\chi_2
{\bar Z}^a
H_i(u,\partial u) 
\|_{L_r^1 L_\omega^{4/3}({\mathbb R}^3)}.\label{ly436}
\end{align}
%%%%%%%%%%%%%%%%%%%%%%%%%%%%%%%%%%%
\noindent{\bf$\cdot$Estimate of (\ref{ly136}).} 
We need to handle the $L^{6/5}$ norm 
for $t<3$ and $t>3$, separately. 
It is easy to get for $|a|\leq 2$
\begin{align}
\|&
\chi_1
{\bar Z}^a F_i^{j,\alpha\beta\gamma}
(\partial_\gamma u_j)
(\partial_{\alpha\beta}^2 u_i)
\|_{L^{6/5}({\mathbb R}^3)}\\
&
\leq
CN_3(u(t))
\bigl(
M_1(u(t))
+
N_4(u(t))
\bigr),\,\,0<t<3.\nonumber
\end{align}
%%%%%%%%%%%%%%%%%%%%%%%%
%%%%%%%%%%%%%%%%%%%%%%%
On the other hand, for $t>3$, 
we need to handle $(t+|x|)^{-1}P_i$ 
$(i=1,2)$, as in (\ref{march45}). 
When $|a''|=0$ (and hence $|a'|\leq 2$) or 
$|a''|=1$ (and hence $|a'|\leq 1$), 
we get owing to (\ref{2019shingengo111}) and (\ref{ddd}) 
%%%%%%%%%%%%%%%%%%%%%%%%%%%%%%%%%%%%%
\begin{align}
\|&
\chi_1
(t+|x|)^{-1}P_1
\|_{L^{6/5}({\mathbb R}^3)}\\
&
\leq
C
\langle t\rangle^{-2}
\biggl(
\sum_{{|b|+|c|}\atop{+d=1}}
\|
\Omega^b L^c S^d{\bar Z}^{a'} u(t)
\|_{L^2({\mathbb R}^3)}
\biggr)
\|
|t-r|\partial
(\partial{\bar Z}^{a''}u(t))
\|_{L^3({\mathbb R}^3)}\nonumber\\
&
\leq
C
\langle t\rangle^{-2}
M_3(u(t))
N_4(u(t)),\,\,3<t<T.
\nonumber
\end{align}
%%%%%%%%%%%%%%%%%%%%%%%%%%%%%%%%
When $|a''|=2$ (and hence $|a'|=0$), 
we get in a similar way
%%%%%%%%%%%%%%%%%%%%%%%
\begin{align}
\|&
\chi_1
(t+|x|)^{-1}P_1
\|_{L^{6/5}({\mathbb R}^3)}\\
&
\leq
C
\langle t\rangle^{-2}
\biggl(
\sum_{{|b|+|c|}\atop{+d=1}}
\|
\Omega^b L^c S^d u(t)
\|_{L^3({\mathbb R}^3)}
\biggr)
\|
|t-r|\partial
(\partial{\bar Z}^{a''}u(t))
\|_{L^2({\mathbb R}^3)}\nonumber\\
&
\leq
C
\langle t\rangle^{-2}
M_3(u(t))
N_4(u(t)),\,\,3<t<T.\nonumber
\end{align}
%%%%%%%%%%%%%%%%%%%%%%%%%%%%%%%%%%
As for $P_2$, 
we get for $|a''|=0$ (hence $|a'|\leq 2$) or 
$|a''|=1$ (hence $|a'|\leq 1$)
\begin{align}
\|&
\chi_1
(t+|x|)^{-1}P_2
\|_{L^{6/5}({\mathbb R}^3)}\\
&
\leq
C
\langle t\rangle^{-2}
\|
|t-r|\partial {\bar Z}^{a'} u(t)
\|_{L^2({\mathbb R}^3)}
\biggl(
\sum_{{|b|+|c|}\atop{+d=1}}
\|
\Omega^b L^c S^d \partial{\bar Z}^{a''} u(t)
\|_{L^3({\mathbb R}^3)}
\biggr)
\nonumber\\
&
\leq
C
\langle t\rangle^{-2}
M_3(u(t))
N_4(u(t)),
\,\,3<t<T.\nonumber
\end{align}
When $|a''|=2$ (hence $|a'|=0$), 
we obtain
\begin{align}
\|&
\chi_1
(t+|x|)^{-1}P_2
\|_{L^{6/5}({\mathbb R}^3)}\\
&
\leq
C
\langle t\rangle^{-2}
\|
|t-r|\partial u(t)
\|_{L^3({\mathbb R}^3)}
\biggl(
\sum_{{|b|+|c|}\atop{+d=1}}
\|
\Omega^b L^c S^d \partial{\bar Z}^{a''} u(t)
\|_{L^2({\mathbb R}^3)}
\biggr)
\nonumber\\
&
\leq
C
\langle t\rangle^{-2}
M_3(u(t))
N_4(u(t)),
\,\,3<t<T.\nonumber
\end{align}
%%%%%%%%%%%%%%%%%%%%%%%%%%%%%
Let us turn our attention to the estimate of 
$\chi_2 (t+|x|)^{-1} P_i$ 
$(i=1,2)$ for $t>0$. 
When $|a''|=0$ (hence $|a'|\leq 2$) or 
$|a''|=1$ (hence $|a'|\leq 1$), 
we get by using 
(\ref{lem311}), (\ref{lem312}), and (\ref{lem313})
\begin{align}
\|&
\chi_2
(t+|x|)^{-1}P_1
\|_{L_r^1 L_\omega^{4/3}({\mathbb R}^3)}\\
&
\leq
C
\langle t\rangle^{-1}
\biggl(
\sum_{{|b|+|c|}\atop{+d=1}}
\|
\Omega^b L^c S^d{\bar Z}^{a'} u(t)
\|_{L^2({\mathbb R}^3)}
\biggr)
\|
\partial^2 {\bar Z}^{a''}u(t)
\|_{L_r^2 L_\omega^4({\mathbb R}^3)}\nonumber\\
&
\leq
C\langle t\rangle^{-1}
M_3(u(t))
\biggl(
\sum_{|a|\leq 2}
\|\partial{\bar Z}^a u(t)\|_{L_r^2 L_\omega^4({\mathbb R}^3)}
+
\|u(t)\|_{L_r^2 L_\omega^4({\mathbb R}^3)}
\biggr)\nonumber\\
&
\leq
C
\langle t\rangle^{-1}
M_3(u(t))
\bigl(
N_4(u(t))
+
X_1(u(t))
\bigr).\nonumber
\end{align}
%%%%%%%%%%%%%%%%%%%%%%%%%%%%%%%%%
When $|a''|=2$ (and hence $|a'|=0$), 
we get by (\ref{lem314})
\begin{align}
\|&
\chi_2
(t+|x|)^{-1}P_1
\|_{L_r^1 L_\omega^{4/3}({\mathbb R}^3)}\\
&
\leq
C
\langle t\rangle^{-1}
\biggl(
\sum_{{|b|+|c|}\atop{+d=1}}
\|
\Omega^b L^c S^d u(t)
\|_{L_r^2 L_\omega^4({\mathbb R}^3)}
\biggr)
\|
\partial^2 {\bar Z}^{a''}u(t)
\|_{L^2({\mathbb R}^3)}\nonumber\\
&
\leq
C\langle t\rangle^{-1}
M_2(u(t))
\bigl(
N_4(u(t))
+
\|u(t)\|_{L^2({\mathbb R}^3)}
\bigr)\nonumber\\
&
\leq
C
\langle t\rangle^{-1}
M_2(u(t))
\bigl(
N_4(u(t))
+
X_0(u(t))
\bigr).\nonumber
\end{align}
As for $P_2$, we get
\begin{align}
\|&
\chi_2
(t+|x|)^{-1}
P_2
\|_{L_r^1 L_\omega^{4/3}({\mathbb R}^3)}\\
&
\leq
C
\langle t\rangle^{-1}
\|
\partial{\bar Z}^{a'} u(t)
\|_{L_r^2 L_\omega^4 ({\mathbb R}^3)}
\sum_{{|b|+|c|}\atop{+d=1}}
\|
\Omega^b L^c S^d \partial{\bar Z}^{a''} u(t)
\|_{L^2 ({\mathbb R}^3)}\nonumber\\
&
\leq
C
\langle t\rangle^{-1}
N_3(u(t))N_4(u(t)).\nonumber
\end{align}
%%%%%%%%%%%%%%%%%%%%%%%%%%%%%%

\noindent{\bf$\cdot$Estimate of (\ref{ly236}).} 
As in the estimate of (\ref{ly136}), 
we need to handle the $L^{6/5}$ norm for 
$t<3$ and $t>3$, separately. 
It is easy to get for $|a|\leq 2$
\begin{equation}
\|
\chi_1
{\bar Z}^a F_i^{jk,\alpha\beta}
(\partial_\alpha u_j)
(\partial_\beta u_k)
\|_{L^{6/5}({\mathbb R}^3)}
\leq
C
N_2(u(t))
N_3(u(t)),\,\,0<t<3
\end{equation}
%%%%%%%%%%%%%%%%%%%%%%%%
On the other hand, for $t>3$ 
we need to bound 
$(t+|x|)^{-1}P_i$ 
$(i=3,4)$. 
%%%%%%%%%%%%%%%%%%%%%%%%%%%%%%%%
By symmetry, it suffices to treat only 
$(t+|x|)^{-1}P_3$. See (\ref{march7413}). 
%%%%%%%%%%%%%%%%%%%%%%%%%%%%%%%%%%%%%%

When $|a''|=0$ 
(and hence $|a'|\leq 2$) 
or 
$|a''|=1$ 
(and hence $|a'|\leq 1$), 
we get by (\ref{ddd})
\begin{align}
\|&
\chi_1(t+|x|)^{-1}P_3
\|_{L^{6/5}({\mathbb R}^3)}\\
&
\leq
C
\langle t\rangle^{-2}
\biggl(
\sum_{{|b|+|c|}\atop{+d=1}}
\|
\Omega^b L^c S^d 
{\bar Z}^{a'}
u(t)
\|_{L^2 ({\mathbb R}^3)}
\biggr)
\|
|t-r|\partial
{\bar Z}^{a''} u(t)
\|_{L^3({\mathbb R}^3)}\nonumber\\
&
\leq
C
\langle t\rangle^{-2}
M_2(u(t))M_3(u(t)),\,\,3<t<T.\nonumber
\end{align}
%%%%%%%%%%%%%%%%%%%%%%%%%%%%%%%%
When $|a''|=2$, we get
\begin{align}
\|&
\chi_1(t+|x|)^{-1}P_3
\|_{L^{6/5}({\mathbb R}^3)}\\
&
\leq
C
\langle t\rangle^{-2}
\biggl(
\sum_{{|b|+|c|}\atop{+d=1}}
\|
\Omega^b L^c S^d u(t)
\|_{L^3 ({\mathbb R}^3)}
\biggr)
\|
|t-r|\partial
{\bar Z}^{a''} u(t)
\|_{L^2({\mathbb R}^3)}\nonumber\\
&
\leq
C
\langle t\rangle^{-2}
M_2(u(t)M_3(u(t)),\,\,3<t<T.
\nonumber
\end{align}
Moreover, we easily obtain for $t>0$
%%%%%%%%%%%%%%%%%%%%%%%%%%%
\begin{align}
\|&
\chi_2(t+|x|)^{-1}P_3
\|_{L_r^1 L_\omega^{4/3}({\mathbb R}^3)}\\
&
\leq
C
\langle t\rangle^{-1}
\biggl(
\sum_{{|b|+|c|}\atop{+d=1}}
\|
\Omega^b L^c S^d {\bar Z}^{a'}u(t)
\|_{L^2 ({\mathbb R}^3)}
\biggr)
\|
\partial
{\bar Z}^{a''} u(t)
\|_{L_r^2 L_\omega^4 ({\mathbb R}^3)}\nonumber\\
&
\leq
C
\langle t\rangle^{-1}
M_3(u(t))N_4(u(t)).\nonumber
\end{align}
%%%%%%%%%%%%%%%%%%%%%%%%%%%%%%%

\noindent{\bf$\cdot$Estimate of (\ref{ly336}).} 
We obtain the following for $|a|\leq 2$\,:
\begin{align}
\|&
\chi_1
{\bar Z}^a
\bigl(
G_i^{\alpha\beta}(u,\partial u)
\partial_{\alpha\beta}^2 u_i
\bigr)
\|_{L^{6/5}({\mathbb R}^3)}\label{march8516}\\
&
\leq
C
\langle t\rangle^{-2}
\bigl(
M_3(u(t))^2
+
N_3(u(t))^2
\bigr)
\bigl(
N_4(u(t))
+
X_0(u(t))
\bigr),\nonumber\\
\|&
\chi_2
{\bar Z}^a
\bigl(
G_i^{\alpha\beta}(u,\partial u)
\partial_{\alpha\beta}^2 u_i
\bigr)
\|_{L_r^1 L_\omega^{4/3}({\mathbb R}^3)}\label{march8517}\\
&
\leq
C\langle t\rangle^{-1}
\bigl(
N_3(u(t))^2
+
X_2(u(t))^2
\bigr)
\bigl(
N_4(u(t))
+
X_2(u(t))
\bigr).\nonumber
\end{align}
%%%%%%%%%%%%%%%%%%%%%%%%%%%%
For the proof of (\ref{march8516})--(\ref{march8517}), 
it suffices to explain how to deal with such a typical term as 
$u_j({\bar Z}^a u_k)\partial^2 u_i$ 
$(|a|=2)$. 
Using (\ref{lem311})--(\ref{lem312}) and (\ref{3ineq1}), 
we get
\begin{align}
\|&
\chi_1 
u_j(t)({\bar Z}^a u_k(t))\partial^2 u_i(t)
\|_{L^{6/5}({\mathbb R}^3)}\\
&
\leq
C\langle t\rangle^{-2}
\|
\langle t-r \rangle u_j(t)
\|_{L^6({\mathbb R}^3)}
\|
\langle t-r \rangle{\bar Z}^a u_k(t)
\|_{L^6({\mathbb R}^3)}
\|
\partial^2 u_i(t)
\|_{L^2({\mathbb R}^3)}\nonumber\\
&
\leq
C\langle t\rangle^{-2}
\bigl(
M_3(u(t))
+
N_3(u(t))
\bigr)^2
\bigl(
N_2(u(t))
+
X_0(u(t))
\bigr).\nonumber
\end{align}
%%%%%%%%%%%%%%%%%%%%%
We also obtain by (\ref{sob24}), (\ref{lem311})--(\ref{lem312})
\begin{align}
\|&
\chi_2
u_j(t)({\bar Z}^a u_k(t))\partial^2 u_i(t)
\|_{L_r^1 L_\omega^{4/3} ({\mathbb R}^3)}\\
&
\leq
C\langle t\rangle^{-1}
\|
ru_j(t)
\|_{L_r^\infty L_\omega^4 ({\mathbb R}^3)}
\|
{\bar Z}^a u_k(t)
\|_{L^2 ({\mathbb R}^3)}
\|
\partial^2 u_i(t)
\|_{L_r^2 L_\omega^\infty ({\mathbb R}^3)}\nonumber\\
&
\leq
C\langle t\rangle^{-1}
\bigl(
N_1(u(t))
+
X_1(u(t))
\bigr)
X_2(u(t))
\bigl(
N_4(u(t))
+
X_2(u(t))
\bigr).\nonumber
\end{align}
%%%%%%%%%%%%%%%%%%%%%%%%%%%%%

\noindent{\bf $\cdot$Estimate of (\ref{ly436}).} 
We can prove for $|a|\leq 2$
\begin{align}
\|&
\chi_1
{\bar Z}^a H_i(u,\partial u)
\|_{L^{6/5}({\mathbb R}^3)}\\
&
\leq
C\langle t \rangle^{-2}
\bigl(
M_2(u(t))
+
N_2(u(t))
\bigr)^2
\bigl(
N_3(u(t))
+
X_2(u(t))
\bigr),\nonumber\\
\|&
\chi_2
{\bar Z}^a H_i(u,\partial u)
\|_{L_r^1 L_\omega^{4/3}({\mathbb R}^3)}
\leq
C\langle t\rangle^{-1}
\bigl(
N_3(u(t))
+
X_2(u(t))
\bigr)^3.
\end{align}
The proof is similar to what we have done above. 
We may therefore omit it. 

Obviously, the estimate (\ref{2019estimatex2}) 
follows from what we have just obtained above. 
The proof of Proposition \ref{propositionx2} 
has been finished.
%%%%%%%%%%%%%%%%%%%%%%%%%%%%%%%%%%%
%%%%%%%%%%%%%%%%%%%%%%%%%%%%%%%%%
%%%%%%%%%%%%%%%%%%%%%%%%%%%
\section{Space-time $L^2$ estimate}\label{sectionspacetimel2norm}
Recall the definition of $L(v(t))$ 
(see (\ref{lv311322})). 
The purpose of this section is to prove the following proposition. 
%%%%%%%%%%%%%%%%%%%%%%%%%%%%%%%%%%%%%%%%
\begin{proposition}\label{propositionstl2}
Suppose that initial data $(\ref{data1})$ is smooth and compactly supported, 
and suppose that the local solution $u$ satisfies $(\ref{lem311})$ in 
some interval $(0,T)$. 
Then the following inequality holds for all $t\in (0,T):$ 
%%%%%%%%%%%%%%%%%%%%%%%%%%%%%%%%%%%%%%%5
\begin{align}\label{2019estimatestl2}
(&1+t)^{-1/2}
\int_0^t L(u_i(\tau))^2 d\tau\\
&
\leq
C
\sum_{{a|+|c|+d\leq 3}\atop{|c|+d\leq 1}}
\|
(\partial{\bar Z}^a L^c S^d u_i)(0)
\|_{L^2({\mathbb R}^3)}^2\nonumber\\
&
%%%%%% From (3.27)?%%%%%%%%%%%%%%%
\hspace{0.2cm}
+
C\int_0^t
\langle\tau\rangle^{-1}
\bigl(
M_3(u(\tau))+N_4(u(\tau))
\bigr)
L(u(\tau))^2 d\tau\nonumber\\
&
\hspace{0.2cm}
+
C\int_0^t
\langle \tau\rangle^{-3/2}
\bigl(
M_3(u(\tau))
+
N_4(u(\tau))
\bigr)
N_4(u(\tau))^2 d\tau\nonumber\\
&
\hspace{0.2cm}
+
C\int_0^t
\langle \tau\rangle^{-1+\eta}
\bigl(
M_3(u(\tau))
+
N_4(u(\tau))
\bigr)
N_4(u(\tau))G(u(\tau)) d\tau\nonumber\\
&
\hspace{0.2cm}
+
C\int_0^t
\langle \tau\rangle^{-1}
\bigl(
M_3(u(\tau))^2
+
N_4(u(\tau))^2
\bigr)
L(u(\tau))^2d\tau\nonumber\\
&
\hspace{0.2cm}
+
C\int_0^t
\langle \tau\rangle^{-2}
\bigl(
M_3(u(\tau))^3
+
N_4(u(\tau))^3
\bigr)
N_4(u(\tau))d\tau\nonumber\\
&
\hspace{0.2cm}
+
C\int_0^t
\langle \tau\rangle^{-2}
\bigl(
M_3(u(\tau))^2
+
N_4(u(\tau))^2
\bigr)
N_4(u(\tau))^2d\tau
\nonumber
\\
%%%%%%%%%%%%%%%%%%%%%%%%%%%%%
&
\hspace{0.2cm}
+
C\int_0^t
\langle\tau\rangle^{-1}
N_4(u(\tau))^2
\bigl(
N_4(u(\tau))
+
M_1(u(\tau))
+
X_2(u(\tau))
\bigr)d\tau\nonumber\\
&
\hspace{0.2cm}
+
C
\int_0^t
\langle\tau\rangle^{-1}
\bigl(
N_3(u(\tau))
+
M_1(u(\tau))
+
X_2(u(\tau))
\bigr)
L(u(\tau))^2
d\tau.\nonumber
\end{align}
\end{proposition}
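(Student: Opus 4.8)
The plan is to apply Lemma~\ref{spacetimeL2}, i.e.\ the weighted space-time estimate (\ref{rod}), with $\mu=1/4$, to each of the functions $w=w_{a,c,d}:={\bar Z}^aL^cS^du_i$ over the range $|a|+|c|+d\leq3$, $|c|+d\leq1$, and to sum the resulting inequalities over $(a,c,d)$. The choice $\mu=1/4$ is forced by the norm $L$: it gives $-2\mu=-1/2$, $-3/2+\mu=-5/4$ and $-1/2+\mu=-1/4$, so that the left-hand side of (\ref{rod}), after summation, equals exactly $(1+t)^{-1/2}\int_0^tL(u_i(\tau))^2\,d\tau$ in view of (\ref{lv311322}). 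To legitimate (\ref{rod}) we take as variable coefficients $h^{\alpha\beta}:=F_i^{j,\alpha\beta\gamma}\partial_\gamma u_j+G_i^{\alpha\beta}(u,\partial u)$, i.e.\ the same operator $P$ of (\ref{pdef}) used in Section~\ref{sect3}; the size condition $\sum|h^{\alpha\beta}|\leq1/2$ follows from (\ref{lem311}) and (\ref{bdd22}), and each $w_{a,c,d}$ is spatially compactly supported at fixed time by finite propagation speed (\ref{fps}). Commuting ${\bar Z}^aL^cS^d$ through (\ref{eq1}) via Lemmas~\ref{lemmacomm} and~\ref{nullpreserved}, the principal null-quadratic term $F_i^{j,\alpha\beta\gamma}(\partial_\gamma u_j)(\partial_{\alpha\beta}^2 w)$ merges with $\Box w$ into $Pw$, so $Pw$ is exactly the collection of forcing terms appearing in (\ref{dec1}) (the two quadratic null-form sums, the $G$-commutator, ${\bar Z}^aL^cS^dH_i(u,\partial u)$, and the term coming from $2{\bar Z}^aC_i$ through $[S,\Box]=-2\Box$), the multi-indices being enlarged to allow the factors $L^c$. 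The data term $C\|\partial u(0,\cdot)\|^2_{L^2}$ on the right of (\ref{rod}) produces, after summation, the first term on the right of (\ref{2019estimatestl2}).

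For the integrand $|\partial w|\,|Pw|$ the point is that $Pw$ splits into exactly the same products that are estimated, term by term, in the proof of Proposition~\ref{ghostenergyestimate}, where each $J_{i,1},\dots,J_{i,5}$ carries an explicit factor $\partial_t{\bar Z}^aSu_i$; the only properties of that factor used there are $\|\partial_t{\bar Z}^aSu_i(t)\|_{L^2({\mathbb R}^3)}\leq CN_4(u(t))$ and, in the interior estimates, $\|r^{-1/4}\partial_t{\bar Z}^aSu_i(t)\|_{L^2({\mathbb R}^3)}\leq CL(u_i(t))$. Both hold with $\partial w$ in place of $\partial_t{\bar Z}^aSu_i$, so the $\chi_1/\chi_2$ case analysis of Section~\ref{sect3} carries over verbatim and, after integrating in time, reproduces the second through seventh terms on the right of (\ref{2019estimatestl2}) — precisely the six integral terms on the right of (\ref{genergyest}). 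The other $Pw$-integrand $|w|\,|Pw|/(r^{1/2}\langle r\rangle^{1/2})$ is absorbed into the same bounds: since $r^{-1/2}\langle r\rangle^{-1/2}\leq r^{-1}$ and $r^{-3/4}\langle r\rangle^{-1/2}\leq r^{-5/4}$, the Hardy inequality gives $\|w/(r^{1/2}\langle r\rangle^{1/2})\|_{L^2({\mathbb R}^3)}\leq\|r^{-1}w\|_{L^2({\mathbb R}^3)}\leq C\|\partial w\|_{L^2({\mathbb R}^3)}\leq CN_4(u)$ and $\|r^{-1/4}w/(r^{1/2}\langle r\rangle^{1/2})\|_{L^2({\mathbb R}^3)}\leq\|r^{-5/4}w\|_{L^2({\mathbb R}^3)}\leq CL(u_i)$, so $w/(r^{1/2}\langle r\rangle^{1/2})$ plays exactly the role of $\partial w$.

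For the four integrands built from $h$ or $\partial h$ we use $|h|\leq C(|u|+|\partial u|)$ and, by (\ref{bdd22}), (\ref{feb72}) and $|\partial^2u|\leq C\sum_{|a|\leq1}|\partial\partial_x^au|+C|u|$ (from (\ref{lem312})), $|\partial h|\leq C\bigl(|u|+\sum_{|a|\leq1}|\partial\partial_x^au|\bigr)$. Splitting ${\mathbb R}^3$ at $|x|=(1+t)/2$ and using, on $|x|<(1+t)/2$, the lower bound $\langle t-r\rangle\geq c\langle t\rangle$ (from (\ref{2019shingengo111})) together with (\ref{3ineq3})--(\ref{3ineq4}), and, on $|x|>(1+t)/2$, the comparability $\langle t\rangle\leq Cr$ together with the $\Omega$-Sobolev inequality (\ref{sob25}), we get $\|\partial h(t)\|_{L^\infty({\mathbb R}^3)}\leq C\langle t\rangle^{-1}\bigl(N_4(u(t))+M_1(u(t))+X_2(u(t))\bigr)$ and $\|\langle r\rangle^{-1/2}h(t)\|_{L^\infty({\mathbb R}^3)}\leq C\langle t\rangle^{-1}\bigl(N_3(u(t))+M_1(u(t))+X_2(u(t))\bigr)$. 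Using the first bound together with $\|r^{-1}w\|_{L^2}\leq C\|\partial w\|_{L^2}\leq CN_4(u)$, the integrands $|\partial h|\,|\partial w|^2$ and $|\partial h|\,|w|\,|\partial w|/(r^{1/2}\langle r\rangle^{1/2})\leq(|w|/r)\,|\partial h|\,|\partial w|$ integrate in space-time to the eighth term on the right of (\ref{2019estimatestl2}). Writing $|h|\,|\partial w|^2/(r^{1/2}\langle r\rangle^{1/2})=\langle r\rangle^{-1/2}|h|\,(r^{-1/4}|\partial w|)^2$ and $|h|\,|w|\,|\partial w|/(r^{3/2}\langle r\rangle^{1/2})=\langle r\rangle^{-1/2}|h|\,(r^{-5/4}|w|)(r^{-1/4}|\partial w|)$, and summing over $(a,c,d)$ with $\|r^{-5/4}w\|_{L^2},\|r^{-1/4}\partial w\|_{L^2}\leq CL(u_i)$, the last two integrands integrate to the ninth (and last) term of (\ref{2019estimatestl2}). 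Collecting all contributions gives (\ref{2019estimatestl2}).

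The hard part will be the pointwise-decay bookkeeping of the previous paragraph: to extract the factor $\langle\tau\rangle^{-1}$ with coefficients expressed purely through $N_3,N_4,M_1,X_2$ one must first eliminate the time derivatives $\partial_t^2u$ and $\partial_t^3u$ hidden inside $h$ and $\partial h$ by means of (\ref{lem312})--(\ref{lem3145}), and then run the interior/exterior dichotomy with care, since on $|x|<(1+t)/2$ the decay comes from the weight $\langle t-r\rangle$ through (\ref{3ineq3})--(\ref{3ineq4}) whereas on $|x|>(1+t)/2$ it comes from the radial weight through (\ref{sob25}). A secondary, essentially clerical, point is verifying that the $|\partial w|\,|Pw|$ estimates genuinely reproduce the six integral terms of (\ref{genergyest}): this amounts to re-running the complete $\chi_1/\chi_2$ case analysis from the proof of Proposition~\ref{ghostenergyestimate} with $\partial_t{\bar Z}^aSu_i$ replaced throughout by $\partial w$ (and, for the companion integrand, by $r^{-1}w$), a replacement that is harmless because only the bounds $\|\cdot\|_{L^2({\mathbb R}^3)}\leq CN_4(u)$ and — in the interior — $\|r^{-1/4}\,\cdot\,\|_{L^2({\mathbb R}^3)}\leq CL(u_i)$ of that factor were ever used.
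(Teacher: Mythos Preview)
Your proposal is correct and follows essentially the same approach as the paper: apply Lemma~\ref{spacetimeL2} with $\mu=1/4$ and the same coefficients $h^{\alpha\beta}=F_i^{j,\alpha\beta\gamma}\partial_\gamma u_j+G_i^{\alpha\beta}(u,\partial u)$, recycle the $J_{i,1},\dots,J_{i,5}$ estimates from Section~\ref{sect3} for the $|\partial w|\,|Pw|$ integrand (and its companion via Hardy or the $r^{-5/4}$-norm), and handle the four $h$/$\partial h$ integrands by the interior/exterior split with (\ref{3ineq3})--(\ref{3ineq4}) and (\ref{sob25}) to produce the last two terms of (\ref{2019estimatestl2}). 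The paper organizes the argument the same way, only choosing to write out the representative case ${\bar Z}^aSu_i$ with $|a|=2$ rather than summing over all $(a,c,d)$.
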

%%%%%%%%%%%%%%%%%%%%%%%%%%%%%%%%%%%%%%%%%
%%%%%%%%%%%%%%%%%%%%%%%%%%%%%%%%%%%%%%
For the proof of this proposition, 
it suffices to explain how to bound 
%%%%%%%%%%%%%%%%%%%%%%%%%%%%%%%%%%%%%
\begin{align}
(&1+t)^{-1/2}
\|
|x|^{-5/4}
{\bar Z}^{a}Su_i
\|_{L^2((0,t)\times{\mathbb R}^3)}^2\\
&
+
(1+t)^{-1/2}
\|
|x|^{-1/4}
\partial{\bar Z}^{a}Su_i
\|_{L^2((0,t)\times{\mathbb R}^3)}^2,\quad |a|=2,\nonumber
\end{align}
because the others can be handled similarly. 
Naturally, 
we rely upon Lemma \ref{spacetimeL2}, 
with $\mu=1/4$, 
$u={\bar Z}^a S u_i$, 
$h^{\alpha\beta}
=F_i^{j,\alpha\beta\gamma}\partial_\gamma u_j+G_i^{\alpha\beta}(u,\partial u)$ 
(see (\ref{nlt1}), (\ref{nlt2}), (\ref{pdef})). 
%%%%%%%%%%%%%%%%%%%%%%%%%%%%%%%%%%%%%%%%%%%
\subsection{Estimate of $\int|\partial u||Pu|dx$}~\\
We start with $\int|\partial u||Pu|dx$, that is, 
$\|
(\partial{\bar Z}^a S u_i)
(P{\bar Z}^a S u_i)
\|_{L^1({\mathbb R}^3)}$. 
Actually, this norm has been already estimated in Section \ref{sect3}. 
See (\ref{dec1}), (\ref{dec5})--(\ref{july6}), 
(\ref{2019aug31456})--(\ref{ji52019july261736}), 
and (\ref{2019aug31459})--(\ref{1818}). 
We thus see that 
it is bounded from above by the sum of 
the third, the fourth, \dots, and the eighth terms 
on the right-hand side of (\ref{genergyest}).  
%%%%%%%%%%%%%%%%%%%%%%
%%%%%%%%%%%%%%%%%%%
%%%%%%%%%%%%%%%%%%%%%%%%%%%%%%%%%%
%%%%%%%%%%%%%%%%%%%%%%%%%%%%%%%%
\subsection{Estimate of $\int |x|^{-1/2}\langle x\rangle^{-1/2}|u||Pu|dx$}~\\
Next, we consider $\int |x|^{-1/2}\langle x\rangle^{-1/2}|u||Pu|dx$, that is, 
$\|
|x|^{-1/2}\langle x\rangle^{-1/2}
({\bar Z}^a S u_i)
(P {\bar Z}^a S u_i)
\|_{L^1({\mathbb R}^3)}$. 
Due to the simple inequality 
$|x|^{-1/2}\langle x\rangle^{-1/2}\leq C|x|^{-1}$, 
we can rely upon the Hardy inequality 
or the norm $\||x|^{-5/4} {\bar Z}^a S u_i(t)\|_{L^2({\mathbb R}^3)}$ 
(see (\ref{lv311322}) for the definition of the norm $L(u(t))$), 
which implies that we have only to repeat the same discussion as in Subsection 6.1. 
%%%%%%%%%%%%%%%%%%%%%%%%%%%%%%%%%%%%%%
%%%%%%%%%%%%%%%%%%%%%%%%%%%%%%%%%%%%%%
\subsection{Estimate of $\int |\partial h||\partial u|^2 dx$}\label{subsect63}~\\
Our next concern is to bound $\int |\partial h||\partial u|^2 dx$, 
that is ,
$
\|
(\partial h)
(\partial{\bar Z}^a S u_i)^2
\|_{L^1({\mathbb R}^3)}, 
$
with 
$h=F_i^{j,\alpha\beta\gamma}\partial_\gamma u_j+G_i^{\alpha\beta}(u,\partial u)$. 
Using (\ref{lem311}) and (\ref{lem312}), 
we easily obtain 
\begin{align}
\|&
(\partial h(t))
(\partial{\bar Z}^a S u_i(t))^2
\|_{L^1({\mathbb R}^3)}\\
&
\leq
C\sum_{|b|\leq 1}
\|
(\partial\partial_x^b u(t))
(\partial{\bar Z}^a S u_i(t))^2
\|_{L^1({\mathbb R}^3)}
+
C\|
u(t) (\partial{\bar Z}^a S u_i(t))^2
\|_{L^1({\mathbb R}^3)}\nonumber\\
&
\leq
C
\langle t\rangle^{-1}
\biggl(
\sum_{|b|\leq 1}
\|
\langle t-r\rangle\partial\partial_x^b u(t)
\|_{L^\infty({\mathbb R}^3)}
+
\|
\langle t-r\rangle u(t)
\|_{L^\infty({\mathbb R}^3)}\nonumber\\
&
\hspace{2cm}
+
\sum_{|b|\leq 1}
\|
r\partial\partial_x^b u(t)
\|_{L^\infty({\mathbb R}^3)}
+
\|
r u(t)
\|_{L^\infty({\mathbb R}^3)}
\biggr)N_4(u(t))^2\nonumber\\
&
\leq
C
\langle t\rangle^{-1}
\bigl(
N_4(u(t))
+
M_1(u(t))
+
N_2(u(t))^{1/2}
X_2(u(t))^{1/2}
\bigr)N_4(u(t))^2\nonumber\\
&
\leq
C
\langle t\rangle^{-1}
\bigl(
N_4(u(t))
+
M_1(u(t))
+
X_2(u(t))
\bigr)N_4(u(t))^2.\nonumber
\end{align}
%%%%%%%%%%%%%%%%%%%%%%%%%%%%%%%%%%
%%%%%%%%%%%%%%%%%%%%%%%%%%%%%%%%
\subsection{Estimate of $\int |x|^{-1/2}\langle x\rangle^{-1/2}
|\partial h||u\partial u|dx$}~\\
We next consider 
$\int |x|^{-1/2}\langle x\rangle^{-1/2}
|\partial h||u\partial u|dx$, 
which is 
$
\|
|x|^{-1/2}\langle x\rangle^{-1/2}
(\partial h)
({\bar Z}^a S u_i)
\partial{\bar Z}^a S u_i
\|_{L^1({\mathbb R}^3)}$. 
We rely upon the Hardy inequality to get
\begin{align*}
\|&
|x|^{-1/2}\langle x\rangle^{-1/2}
(\partial h(t))
({\bar Z}^a S u_i(t))
\partial{\bar Z}^a S u_i(t)
\|_{L^1({\mathbb R}^3)}\\
&
\leq
C
\|
\partial h(t)
\|_{L^\infty({\mathbb R}^3)}
\|
|x|^{-1}{\bar Z}^a S u_i(t)
\|_{L^2({\mathbb R}^3)}
\|
\partial{\bar Z}^a S u_i(t)
\|_{L^2({\mathbb R}^3)}
\leq
C\|
\partial h(t)
\|_{L^\infty({\mathbb R}^3)}
N_4(u(t))^2,
\end{align*}
which implies that we have only to repeat the same argument as in 
Subsection 6.3. 
%%%%%%%%%%%%%%%%%%%%%%%%%%%%%%%%%%%%%%%%
\subsection{Estimate of $\int |x|^{-1/2}\langle x\rangle^{-1/2}
|h||\partial u|^2dx$}~\\
Next, let us consider 
$\int |x|^{-1/2}\langle x\rangle^{-1/2}|h||\partial u|^2dx$, 
that is, 
$
\|
|x|^{-1/2}\langle x\rangle^{-1/2}
h
(\partial{\bar Z}^a S u_i)^2
\|_{L^1({\mathbb R}^3)}
$. 
Recalling that 
$h=F_i^{j,\alpha\beta\gamma}\partial_\gamma u_j+G_i^{\alpha\beta}(u,\partial u)$, 
we get 
by (\ref{3ineq3}), (\ref{3ineq4}), and (\ref{sob25})
\begin{align}
\|&
|x|^{-1/2}\langle x\rangle^{-1/2}
|h(t)|
(\partial{\bar Z}^a S u_i(t))^2
\|_{L^1({\mathbb R}^3)}\\
&
\leq
C
\langle t\rangle^{-1}
\bigl(
\|
\langle t-r\rangle\partial u(t)
\|_{L^\infty({\mathbb R}^3)}
+
\|
\langle t-r\rangle u(t)
\|_{L^\infty({\mathbb R}^3)}\nonumber\\
&
\hspace{2cm}
+
\|
r\partial u(t)
\|_{L^\infty({\mathbb R}^3)}
+
\|
r u(t)
\|_{L^\infty({\mathbb R}^3)}
\bigr)
\|
|x|^{-1/4}
\partial{\bar Z}^a S u_i(t)
\|_{L^2 ({\mathbb R}^3)}^2\nonumber\\
&
\leq
C
\langle t\rangle^{-1}
\bigl(
N_3(u(t))
+
M_1(u(t))
+
X_2(u(t))
\bigr)
L(u(t))^2.\nonumber
\end{align}
%%%%%%%%%%%%%%%%%%%%%%%%%%%%%%%%%%%%%%%
%%%%%%%%%%%%%%%%%%%%%%%%%%%%%%%%%%%%%%%
\subsection{Estimate of $\int |x|^{-3/2}\langle x\rangle^{-1/2}
|h||u\partial u|dx$}\label{subsect66}~\\
Finally, 
we consider 
$\int |x|^{-3/2}\langle x\rangle^{-1/2}
|h||u\partial u|dx$, 
that is, 
$$
\|
|x|^{-3/2}\langle x\rangle^{-1/2}
|h|
({\bar Z}^a S u_i(t))
(\partial{\bar Z}^a S u_i(t))
\|_{L^1({\mathbb R}^3)}.
$$ 
Recalling the definition of $L(u(t))$ 
(see (\ref{lv311322})) and 
using the norm 
$\||x|^{-5/4} {\bar Z}^a S u_i(t)\|_{L^2({\mathbb R}^3)}$, 
we can bound it in the same way as in Subsection 6.5.

Now we are in a position to complete the proof of 
Proposition \ref{propositionstl2}. 
Obviously, the estimate (\ref{2019estimatestl2}) 
is a direct consequence of what we have just obtained above. 
The proof has been finished.
%%%%%%%%%%%%%%%%%%%%%%%%%%%%%%%%%%%%%%%%%
%%%%%%%%%%%%%%%%%%%%%%%%%%%%%%%%%%%%%%%%%
%%%%%%%%%%%%%%%%%%%%%%%%%%%%%%%%%%%%%%
\section{Proof of Theorem \ref{maintheorem}}
We are in a position to prove Theorem \ref{maintheorem}, 
firstly for smooth data with compact support. 
Our proof of global existence uses the method of continuity, 
and the important property (\ref{continuitymethod2019}) is easier to show 
when smooth data have compact support.

We use the notation 
 \begin{align}
 &{\mathcal N}_T(w):=\sup_{0<t<T}N_4(w(t)),\\
 &{\mathcal M}_T(w)
 :=
 \sup_{0<t<T}\langle t\rangle^{-\delta}M_3(w(t)),\\
 &{\mathcal X}_T(w)
 :=
 \sup_{0<t<T}X_2(w(t)),\\
 &{\mathcal G}_T(w):=
 \biggl(\int_0^T G(w(t))^2 dt\biggr)^{1/2},\\
 &{\mathcal L}_T(w)
 :=
 \sup_{0<t<T}
 \langle t\rangle^{-(1/4)-\delta/2}
 \biggl(\int_0^t L(w(\tau))^2 d\tau
 \biggr)^{1/2}.
 \end{align}
Here, $0<\delta<1/6$. 
%%%%%% 1/8 --> 1/6, 2019/8/9 %%%%%%%%%%%%%
The proof of global existence basically consists of two steps. 
We firstly show that the estimate 
$\max
\{
{\mathcal N}_{T^*}(u),\,
{\mathcal M}_{T^*}(u),\,
{\mathcal X}_{T^*}(u)
\}
\leq
2C_0D(f,g)
$
implies 
$
\sup
\{\langle\!\langle u(t)\rangle\!\rangle:\,t\in(0,T^*)\}
\leq
\varepsilon_1^*$, 
and we secondly show that the latter implies 
the improved estimate 
$
\max
\{
{\mathcal N}_{T^*}(u),\,
{\mathcal M}_{T^*}(u),\,
{\mathcal X}_{T^*}(u)
\}
\leq
\sqrt{3}C_0D(f,g)$. 
See (\ref{langlerangle}) for 
$\langle\!\langle u(t)\rangle\!\rangle$, 
and see (\ref{small7}) for $D(f,g)$. 
See (\ref{3quantities411}) and (\ref{lem311}) for $T^*$ and $\varepsilon_1^*$. 
We will set the constant $C_0$ below (see (\ref{2019c0aug10})). 
The last estimate, together with the standard 
local existence theorem, implies existence of 
global solutions.

We first note that, 
using the idea of decomposing the time interval $(1,T)$ 
dyadically as in Sogge \cite[p.\,363]{Sogge2003} 
(see also \cite{HZ2019}, (161) and (125)), 
we get
\begin{equation}\label{dyadic11}
\int_0^T
\langle \tau\rangle^{-1+2\delta}
L(w(\tau))^2
d\tau
\leq
C{\mathcal L}_T(w)^2
\end{equation}
owing to $\delta<1/6$, 
%%%%%%%%%%%%%%%%%%%%
\begin{equation}\label{dyadic22}
\int_0^T
\langle \tau\rangle^{-1+\eta+\delta}
G(w(\tau))
d\tau
\leq
C{\mathcal G}_T(w)
\end{equation}
owing to $\eta<1/3$ (and hence $\eta+\delta<1/2$). 
We also note that 
it follows from (\ref{sob22}), (\ref{sob25}) and 
the Sobolev embedding 
$H^2\hookrightarrow L^\infty$ that 
%%%%%%%%%%%%%%%%%%%%%%%%%%%%%%%%%%%%%%%%
\begin{equation}\label{sizesize408}
\sup_{t\in (0,T)}
\langle\!\langle w(t)\rangle\!\rangle
\leq
C^*
\bigl(
{\mathcal N}_T(w)
+
{\mathcal M}_T(w)
+
{\mathcal X}_T(w)
\bigr)
\end{equation}
%%%%%%%%%%%%%%%%%%%%%%%%%%%%%%%%%%%
for a constant $C^*>0$ independent of $T$. 
Therefore, using (\ref{small32}), 
Propositions \ref{ghostenergyestimate}, 
\ref{propositionm3}, \ref{propositionx2}, and \ref{propositionstl2} 
together with (\ref{dyadic11})--(\ref{sizesize408}) 
and the Young inequality, 
we see that, if the local solution defined for 
$(t,x)\in [0,T)\times{\mathbb R}^3$ satisfies 
%%%%%%%%%%%%%%%%%%%%%%%%%%%%%%
\begin{equation}\label{sizecondition411}
C^*
\bigl(
{\mathcal N}_T(u)
+
{\mathcal M}_T(u)
+
{\mathcal X}_T(u)
\bigr)
\leq
\varepsilon_1^*,
\end{equation}
%%%%%%%%%%%%%%%%%%%%%%%%%%%%%%%5
then we have 
%%%%%%%%%%%%%%%%%%%%%%%%%%%%%%%%%%%%%%%%%
\begin{align}\label{estimateng413}
&{\mathcal N}_T(u)^2
+
{\mathcal G}_T(u)^2
+
{\mathcal L}_T(u)^2
\\
&
\leq
C_{11}D(f,g)^2
\bigl(
1+D(f,g)^4
\bigr)\nonumber\\
&
+
C
\bigl(
{\mathcal N}_T(u)^3
+
{\mathcal M}_T(u)^3
+
{\mathcal X}_T(u)^3
\bigr)
+
C
\bigl(
{\mathcal N}_T(u)^4
+
{\mathcal M}_T(u)^4
\bigr)\nonumber\\
&
+
C_{12}
\bigl(
{\mathcal N}_T(u)
+
{\mathcal M}_T(u)
+
{\mathcal X}_T(u)
\bigr)
{\mathcal L}_T(u)^2
+
C_{13}
\bigl(
{\mathcal N}_T(u)^2
+
{\mathcal M}_T(u)^2
\bigr)
{\mathcal G}_T(u)\nonumber\\
&
+
C_{14}
\bigl(
{\mathcal N}_T(u)^2
+
{\mathcal M}_T(u)^2
\bigr)
{\mathcal L}_T(u)^2,\nonumber
\end{align}
%%%%%%%%%%%%%%%%%%%%%%%%%%%%%%%%%
%%%%%%%%%%%%%%%%%%%%%%%%%%%%
\begin{align}\label{estimatemtu414}
{\mathcal M}_T(u)
\leq&
C_{21}D(f,g)
+
C
\bigl(
{\mathcal N}_T(u)^2
+
{\mathcal M}_T(u)^2
\bigr)\\
&
+C
\bigl(
{\mathcal N}_T(u)^3
+
{\mathcal M}_T(u)^3
+
{\mathcal X}_T(u)^3
\bigr)
+
C
{\mathcal M}_T(u)
{\mathcal N}_T(u)^3,\nonumber
\end{align}
%%%%%%%%%%%%%%%%%%%%%%%%%%%%%%%
%%%%%%%%%%%%%%%%%%%%%%%%%%
%%%%%%%%%%%%%%%%%%%%%
\begin{align}\label{estimatextu414}
{\mathcal X}_T(u)
\leq&
C_{31}D(f,g)
+
C
\bigl(
{\mathcal N}_T(u)^2
+
{\mathcal M}_T(u)^2
+
{\mathcal X}_T(u)^2
\bigr)\\
&
+
C
\bigl(
{\mathcal N}_T(u)^3
+
{\mathcal M}_T(u)^3
+
{\mathcal X}_T(u)^3
\bigr).\nonumber
\end{align}
%%%%%%%%%%%%%%%%%%%%%%%%%
%%%%%%%%%%%%%%%%%%%%%%%
We remark that 
in (\ref{estimatemtu414}), 
the equality 
$r\partial_k=\Lambda+\omega_j\Omega_{jk}$ 
has been used. 
%%%%%%%%%%%%%%%%%%%%%%%%%%%%%%%%%%
We set 
\begin{equation}\label{2019c0aug10}
C_0:=\max\{C_d,\,\sqrt{C_{11}},\,C_{21},\,C_{31}\}.
\end{equation} 
(See (\ref{small32}) for $C_d$.)

Recall that due to the size condition (\ref{dontforgetsmall}) 
(see also (\ref{small31})), 
we enjoy a unique solution at least for a short time interval, say, $[0,T_*)$. 
By virtue of the finite speed of propagation, 
it is easy to observe the important property that 
this local solution satisfies 
\begin{equation}\label{continuitymethod2019}
N_4(u(t)),\,M_3(u(t)),\,X_2(u(t))
\in
C([0,T_*)),
\end{equation}
which implies that 
\begin{equation}
\max\{N_4(u(t)),\,(1+t)^{-\delta}M_3(u(t)),\,X_2(u(t))\}
\leq
2C_dD(f,g)
\end{equation}
at least for a short time interval, say, $[0,{\hat T})$ 
with ${\hat T}\leq T_*$. See (\ref{small32}). 
For the given data $(f_i,g_i)\in 
C_0^\infty({\mathbb R}^3)\times C_0^\infty({\mathbb R}^3)$ 
$(i=1,\dots,N)$ 
satisfying (\ref{small31}), 
we therefore have the non-empty set 
$\{T>0\,:$ There exists a unique smooth solution $u(t,x)$ to (\ref{eq1}) 
defined for all $(t,x)\in [0,T)\times{\mathbb R}^3$ 
satisfying 
\begin{equation}\label{3quantities411}
\max\{N_4(u(t)),\,(1+t)^{-\delta}M_3(u(t)),\,X_2(u(t))\}
\leq
2C_0D(f,g)
\end{equation}
for all $t\in [0,T)$\}. 
Define $T^*\in (0,\infty]$ as the supremum of this non-empty set. 
(Readers are advised not to confuse it with $T_*$.)

Let $T'$ be an arbitrary number such that $T'<T^*$. 
Since $T'$ is finite and 
$u$ is a smooth solution defined 
for all $(t,x)\in [0,T^*)\times{\mathbb R}^3$ 
with 
${\rm supp}\,u(t,\cdot)\subset
\{x\in{\mathbb R}^3\,:\,|x|<t+R\}$ 
for some $R>0$, 
we can easily verify 
${\mathcal L}_{T'}(u)<\infty$ 
by using the Hardy-type inequality. 
It is also easy to verify 
${\mathcal G}_{T'}(u)<\infty$. 
We are in a position to show that 
the inequality (\ref{estimateng413}) holds for $T=T'$ 
and the last three terms on its right-hand side 
can be absorbed into its left-hand side. 
(These terms are allowed to move to the left-hand side, 
thanks to ${\mathcal L}_{T'}(u),\,{\mathcal G}_{T'}(u)<\infty$.)
We start with the inequality 
\begin{equation}\label{estimatemaxmax411}
\max\{{\mathcal N}_{T'}(u),\,
{\mathcal M}_{T'}(u),\,
{\mathcal X}_{T'}(u)\}
\leq
2C_0D(f,g),
\end{equation}
which holds owing to (\ref{3quantities411}) 
and the definition of $T^*$. 
This inequality (\ref{estimatemaxmax411}), 
combined with the size condition (\ref{small31}), 
implies (\ref{sizecondition411}) with $T=T'$. 
Therefore, we see by (\ref{sizesize408}) that 
the inequality (\ref{lem311}) is true for all 
$t\in [0,T']$, 
which among others implies that 
the inequality (\ref{estimateng413}) holds for $T=T'$. 
Due to the size condition (\ref{small31}), 
it is possible to carry out the absorption step indicated above, 
which yields 
${\mathcal N}_{T'}(u)^2
\leq
2C_{11}D(f,g)^2
+
C_{15}D(f,g)^3
$ 
for a suitable constant $C_{15}>0$. 
Here, we have used the assumption 
$D(f,g)\leq 1$ and the inequality (\ref{estimatemaxmax411}). 
Just to be sure, we note that 
the second last term on the right-hand side of (\ref{estimateng413}) 
with $T=T'$ has been handled as 
\begin{align*}
C&_{13}
\bigl(
{\mathcal N}_{T'}(u)^2
+
{\mathcal M}_{T'}(u)^2
\bigr)
{\mathcal G}_{T'}(u)\\
&
\leq
2C_{13}(2C_0D(f,g))^2{\mathcal G}_{T'}(u)
\leq
C_{13}^2\bigl(4C_0^2D(f,g)^{3/2}\bigr)^2
+
D(f,g){\mathcal G}_{T'}(u)^2
\end{align*}
and the last term above has moved to the left-hand side 
of (\ref{estimateng413}) with $T=T'$. 
Since $T'(<T^*)$ is arbitrary, 
we finally get
\begin{equation}
{\mathcal N}_{T^*}(u)^2
\leq
3C_{11}D(f,g)^2
\end{equation}
due to the condition (\ref{small31}). 

As for the estimate of ${\mathcal M}_{T^*}(u)$ and ${\mathcal X}_{T^*}(u)$, 
we may start with the inequality 
\begin{equation}\label{maxnmxtu414}
\max\{{\mathcal N}_{T^*}(u),\,{\mathcal M}_{T^*}(u),\,
{\mathcal X}_{T^*}(u)\}
\leq
2C_0D(f,g),
\end{equation}
which is a direct consequence of (\ref{3quantities411}) 
and the definition of $T^*$, 
and combine it with the size condition (\ref{small31}) 
to see that 
the condition (\ref{sizecondition411}) holds 
for $T=T^*$. 
It therefore follows 
directly from (\ref{estimatemtu414}), (\ref{estimatextu414}) 
with $T=T^*$ and (\ref{maxnmxtu414}) that 
\begin{eqnarray}
&
{\mathcal M}_{T^*}(u)
\leq
C_{21}D(f,g)
+
C_{22}D(f,g)^2,\\
&
{\mathcal X}_{T^*}(u)
\leq
C_{31}D(f,g)
+
C_{32}D(f,g)^2
\end{eqnarray}
for suitable constants $C_{22},\,C_{32}>0$. 
Here, we have naturally used the assumption 
$D(f,g)\leq 1$. 
Using the size condition (\ref{small31}), 
we finally obtain 
\begin{equation}\label{202103261506}
{\mathcal M}_{T^*}
\leq
\frac32
C_{21}D(f,g),
\,\,
{\mathcal X}_{T^*}
\leq
\frac32
C_{31}D(f,g).
\end{equation} 
In sum, we have obtained 
%%%%%%%%%%%%%%%%%%%%%%%%%%%%%%%%%%%%%
\begin{align}\label{keyestimate417}
\max\{{\mathcal N}_{T^*}(u),\,{\mathcal M}_{T^*}(u),\,
{\mathcal X}_{T^*}(u)\}
&\leq
\max
\biggl\{
\sqrt{3C_{11}},\,
\frac32
C_{21},\,
\frac32
C_{31}
\biggr\}
D(f,g)\\
&
\leq
\sqrt{3}
C_0D(f,g).\nonumber
\end{align}
%%%%%%%%%%%%%%%%%%%%%%%%%%%%%%%%%%%%
If we suppose $T^*<\infty$, 
then it is possible to show that 
there exists ${\bar T}>T^*$ 
such that 
the unique local solution $u(t,x)$ 
to (\ref{eq1})--(\ref{data1}) 
exists for all 
$(t,x)\in [0,{\bar T})\times{\mathbb R}^3$ 
satisfying (\ref{3quantities411}) 
for all $t\in [0,{\bar T})$. 
This contradicts the definition of $T^*$. 
We therefore have $T^*=\infty$, 
which proves Theorem \ref{maintheorem} 
at least for smooth, compactly supported data. 

It remains to relax the regularity of data 
and eliminate compactness of the support of data. 
We naturally rely upon the standard idea of 
using the mollifier and the cut-off technique. 
It is then possible to show that, 
for any data $(f,g)$ satisfying 
$D(f,g)\leq \varepsilon_0/2$ 
(see (\ref{small31}) for $\varepsilon_0$), 
there exists a sequence of pairs of compactly supported 
smooth ${\mathbb R}^N$-valued functions $(f^{(n)},g^{(n)})$ 
$(n=1,2,\dots)$ 
such that 
$D(f^{(n)},g^{(n)})\leq \varepsilon_0$ 
for sufficiently large $n$, 
and $D(f-f^{(n)},g-g^{(n)})\to 0$ 
as $n\to\infty$. 
It then follows from the above argument that, 
for sufficiently large $n$, 
we enjoy the unique solution $u^{(n)}$ to (\ref{eq1}) with data 
$(f^{(n)},g^{(n)})$ given at $t=0$ 
which satisfies the estimate (\ref{keyestimate417}) 
with $T^*=\infty$. 
Moreover, repeating the arguments that we have done in the previous sections, 
we obtain for $u_{m,n}:=u^{(m)}-u^{(n)}$
\begin{align}
\max\biggl\{&
\sup_{t>0}
N_2(u_{m,n}(t)),\,\,
%%%%%%%%%%%%%%%%%%%%%%%%%%%%%%%5
\sup_{t>0}
\langle t\rangle^{-\delta}
M_1(u_{m,n}(t)),\,\,
%%%%%%%%%%%%%%%%%%%%%%%%%%%%
\sup_{t>0}
X_0(u_{m,n}(t)),\\
&
%%%%%%%%%%%%%%%%%%%%%%%%%%%%
\sum_{j=1}^3
\sum_{{|a|+|c|}\atop{+d\leq 1}}
\|
\langle t-r\rangle^{-(1/2)-\eta}
T_j{\bar Z}^a L^c S^d u_{m,n}
\|_{L^2((0,\infty)\times{\mathbb R}^3)},\nonumber\\
&
%%%%%%%%%%%%%%%%%%%%%%%%%%%%%%%%5
\sum_{{|a|+|c|}\atop{+d\leq 1}}
\sup_{T>0}
(1+T)^{-(1/4)-\delta/2}
\|
|x|^{-5/4}
{\bar Z}^a L^c S^d u_{m,n}
\|_{L^2((0,T)\times{\mathbb R}^3)},\nonumber\\
&
%%%%%%%%%%%%%%%%%%%%%%%%%%%%%
\sum_{{|a|+|c|}\atop{+d\leq 1}}
\sup_{T>0}
(1+T)^{-(1/4)-\delta/2}
\|
|x|^{-1/4}
\partial{\bar Z}^a L^c S^d u_{m,n}
\|_{L^2((0,T)\times{\mathbb R}^3)}
\biggr\}
\nonumber\\
\leq&
CD(f^{(m)}-f^{(n)},g^{(m)}-g^{(n)}).\nonumber
\end{align}
(Here we are supposed to choose $\varepsilon_0$ smaller than before, 
if necessary.) 
By the standard argument, we see that 
the sequence $\{u^{(n)}\}$ has the limit and it is the solution to 
$(\ref{eq1})$ that we have sought for. 
The proof of Theorem \ref{maintheorem} has been finished.
%%%%%%%%%%%%%%%%%%%%%%%%%%%%%%%%%%%%%%%
\section*{Acknowledgments} 
The authors are very grateful to the referee for 
careful reading of the manuscript and helpful comments. 
The first author was supported by 
JSPS KAKENHI Grant Number JP18K03365. 
%%%%%%%%%%%%%%%%%%%%%%%%%%%%%%%%%%%%%%%
\bibliographystyle{amsplain}

%%%%%%%%%%%%%%%%%%%%%%%%%%%%%%%%%%%%%%%%%%%%%%%%%%%%
%%%%%%%%%%%%%%%%%%%%%%%%%%%%%%%%%%%%%%%%%%%%%%%%%%%%%%
%%%%%%%%%%%%%
%
\end{document}